\def\reals{{\mathbb R}}
\def\complex{{\mathbb C}}
\def\cov{\mathrm{cov}}
\def\var{\mathrm{var}}
\newcommand{\eps}{\epsilon}
\newcommand{\E}{\mathbb{E}}
\def\pr{\mathbb{P}}
\newcommand{\ind}{\mathds{1}}
\newcommand\defeq{\,\,\stackrel{\mathclap{\normalfont\mbox{def}}}{=}\,\,}
\def\calD{\mathcal{D}}
\def\calM{\mathcal{M}}
\def\calN{\mathcal{N}}
\def\calR{\mathcal{R}}
\def\calX{\mathcal{X}}
\def\calY{\mathcal{Y}}
\def\norm#1{\mathopen\| #1 \mathclose\|}
\newcommand{\brackets}[1]{\langle #1\rangle}
\def\bx{\text{\textbf{x}}}
\def\by{\text{\textbf{y}}}
\def\be{\text{\textbf{e}}}
\theoremstyle{plain}
\newtheorem{theo}{Theorem}[section]
\newtheorem{theorem}[theo]{Theorem}
\newtheorem{lemma}[theo]{Lemma}
\newtheorem{claim}[theo]{Claim}
\newtheorem{corollary}[theo]{Corollary}
\theoremstyle{definition}
\newtheorem{definition}[theo]{Definition}
\theoremstyle{remark}
\DeclareMathOperator*{\argmin}{argmin}
\DeclareMathOperator*{\poly}{poly}
\DeclareMathOperator*{\Bin}{Bin}
\def\Bern{\text{Bern}}
\def\Unif{\text{Unif}}
\def\naturals{{\mathbb N}}
\def\XXAm{\overline{x^2}_1}
\def\XXA{n_1\XXAm}
\def\XXBm{\overline{x^2}_2}
\def\XXB{n_2\XXBm}
\def\XXm{\overline{x^2}}
\def\XX{n\XXm}
\def\XYAm{\overline{xy}_1}
\def\XYA{n_1\XYAm}
\def\XYBm{\overline{xy}_2}
\def\XYB{n_2\XYBm}
\def\XYm{\overline{xy}}
\def\XY{n\XYm}
\def\YYAm{\overline{y^2}_1}
\def\YYBm{\overline{y^2}_2}
\def\YYm{\overline{y^2}}
\def\dpstats{\texttt{DPStats}}
\def\dpkw{\texttt{DPKW}}
\def\dpbern{\texttt{DPBern}}
\newcommand\blfootnote[1]{%
  \begingroup
  \renewcommand\thefootnote{}\footnote{#1}%
  \addtocounter{footnote}{-1}%
  \endgroup
}
\def\widebreve#1{\mathop{\vbox{\m@th\ialign{##\crcr\noalign{\kern3\p@}%
      \brevefill\crcr\noalign{\kern3\p@\nointerlineskip}%
      $\hfil\displaystyle{#1}\hfil$\crcr}}}\limits}
\def\brevefill{$\m@th \setbox\z@\hbox{$\braceld$}%
  \bracelu\leaders\vrule \@height\ht\z@ \@depth\z@\hfill\braceru$}
\def\DataSampler{\mathrm{DataSampler}}
\def\DataSamplerList{\mathrm{DataSamplerList}}
\def\MCTester{\mathrm{MonteCarloTester}}
\def\MCTesterList{\mathrm{MonteCarloTesterList}}
\def\EstRejection{\mathrm{EstimateRejectionProb}}
\def\CompareAlgs{\mathrm{CompareAlgorithms}}
\begin{document}

\pagenumbering{roman}

\title{
Hypothesis Testing for
Differentially Private Linear Regression
}
\date{}
\author[1]{Daniel Alabi
\thanks{Email:~\url{alabid@g.harvard.edu}. D.A. was supported by a Fellowship
from Meta AI
and Cooperative Agreement CB20ADR0160001
with the Census Bureau.}
}
\author[2]{Salil Vadhan
\thanks{Email:~\url{salil_vadhan@harvard.edu}. S.V. was supported by Cooperative Agreement
CB20ADR0160001 and a Simons Investigator Award.}}
\affil[1,2]{Harvard School of Engineering and Applied Sciences}
\maketitle

\blfootnote{
The views expressed in this paper are those of the authors and not those of
the U.S. Census Bureau or any other sponsor.
This work will appear at the
2022 IMS (Institute of Mathematical Statistics) and the
2022 SEA (Southern Economic Association) Annual Meetings.}

\begin{abstract}

In this work, we design differentially private hypothesis tests
for the following problems in the general
linear model: testing a linear
relationship and testing for the presence of mixtures.
The majority of our hypothesis tests are based on
differentially private versions of the
$F$-statistic for the general linear model framework, which
are uniformly most powerful unbiased in the non-private setting.
We also present 
other tests for these problems, one of which is
based on the differentially private nonparametric tests of 
Couch, Kazan, Shi, Bray, and Groce (CCS 2019), which is
especially suited for the small dataset regime.
We show that the differentially
private $F$-statistic converges to the 
asymptotic distribution of
its non-private counterpart.
As a corollary, the statistical power of the differentially
private $F$-statistic converges to the statistical power of the
non-private $F$-statistic.
Through a suite of
Monte Carlo based experiments, we show that our tests achieve desired 
\textit{significance levels} and have a high \textit{power} 
that approaches the power of the
non-private tests as we increase sample sizes or the privacy-loss parameter.
We also
show when our tests
outperform existing methods in the literature.

\end{abstract}

\clearpage

\tableofcontents

\clearpage

\pagenumbering{arabic}
\section{Introduction}

Linear regression is one of the most fundamental statistical techniques available to social scientists and
economists (especially econometricians).
One of the goals of
performing regression analysis is for use in decision-making via
point estimation (i.e., getting a single predicted value for the dependent variable). To increase the
confidence of decision-makers and analysts in such estimates, it is often important to also release
accompanying uncertainty estimates for the point 
estimates~\citep{ChettyF19, NBERw25147, 10.1093/qje/qjz042, NBERw25456}.

In this work, we aim to provide
differentially private linear regression
\textit{uncertainty quantification} via the use of hypothesis tests. 
Given the realistic possibility of reconstruction,
membership, and inference attacks~\citep{Sweeney97, DSSU17},
we can rely on 
Differential Privacy (DP), a rigorous approach to quantifying
privacy loss~\citep{DworkMNS06, DKMMN06}.
The task of DP linear regression is to,
given datapoints $\{(x_i, y_i)\}_{i=1}^n$,
estimate
point or uncertainty estimates for linear regression while
satisfying differential privacy.
The majority of our tests will rely on generalized likelihood ratio test $F$-statistics. 

In previous works~\citep{Sheffet17, Wang18, CWZ20, AMSSV20}, 
differentially private estimators
for linear regression are explored
and key factors (such as sample size and variance of the independent variable)
that affect the accuracy of these estimators are identified. 
The focus of these previous works is for point estimate
prediction. The predictive accuracy of 
such estimators can be measured in 
terms of a confidence bound or mean-squared error.
We continue the study of the utility of such estimators for use in uncertainty quantification via
hypothesis 
testing~\citep{Sheffet17}.
(See Section~\ref{sec:prelims} for background on hypothesis testing.)

Earlier work on uncertainty quantification for linear regression
was done by Sheffet~\citep{Sheffet17}, who constructed confidence intervals and
hypothesis tests based on the $t$-test statistic,
and can be used to test a linear relationship.
The random projection routine
in~\citep{Sheffet17}, based on the
Johnson–Lindenstrauss (JL) transform, only starts to
correctly reject the null hypothesis when the sample
size is very large (or the variables have
a large spread). This observation
is also supported by the
work of~\citep{CouchKSBG19}. Furthermore, the
random projection routine
requires extra parameters 
(e.g., for specifying the dimensions of the random
matrix).
In our work, 
we use the $F$-statistic and our framework can be used to
test mixture models, amongst other tests. 
We provide 
a general framework for DP tests based on the $F$-statistic.
In addition, we will consider hypothesis testing for linear regression
coefficients on both small and large datasets.
For the mixture model tester, we additionally adapt and evaluate a nonparametric method, 
a DP analogue
of the Kruskal-Wallis test due to
Couch, Kazan, Shi, Bray, and Groce~\citep{CouchKSBG19}, which is especially suited 
for the small dataset regime.
To the best of our knowledge, our tests are the
first to differentially
privately detect mixtures in linear regression
models, with accompanying experimental validation.

\subsection{Our Contributions}

In this work, 
we show that for the problem of differentially private linear regression, we can perform
hypothesis testing for two problems in the general linear model: 

\begin{enumerate}

\item \textbf{Testing a Linear Relationship}: 
is the slope of the linear model equal to some constant (e.g., slope is 0)?

\item \textbf{Testing for Mixtures}: does the population 
consist of one or more sub-populations with different regression coefficients?

\end{enumerate}

We provide a differentially private analogue of the
$F$-statistic which we,
under the general linear model,
show converges in distribution to the 
asymptotic distribution of the $F$-statistic
(Theorem~\ref{thm:dpf}). Furthermore, the
DP regression coefficients converge, in probability, to
the true coefficients (Lemma~\ref{lem:t1conv2}).
In particular, in Lemma~\ref{lem:t1conv2}, 
we show a $1/\sqrt{n}$ statistical rate of convergence for
the DP regression coefficients used for our hypothesis
tests. This matches the optimal rate~\citep{Lei11}.
We then use our DP $F$-statistic and parametric estimates
to obtain DP hypothesis tests using a Monte Carlo
parametric bootstrap, following Gaboardi, Lim, Rogers, and Vadhan~\citep{RogersVLG16}. The
Monte Carlo parametric bootstrap is used to
ensure that our tests achieve a target
significance level of $\alpha$ (i.e., data generated under the null hypothesis is rejected
with probability $\alpha$).
We experimentally
compare these tests to their non-private counterparts
for univariate linear regression
(i.e., one independent variable and one dependent
variable).
To the best of our knowledge, our tests are the first
that use
the $F$-statistic to perform tests on the problem of
linear regression while ensuring privacy of the data subjects.
In addition, our $F$-statistic based tests can be adapted to
work on design matrices in any dimension.
i.e., the design matrix can be cast in the form
$X\in\reals^{n\times p}$ for any integer $p \geq 2$ where $n$ represents the
number of individuals in the dataset and $p$ represents the number
of features per individual; we leave experimental evaluation of the multivariate
case for future work.

Experimental evaluation of our hypothesis tests is done on:

\begin{enumerate}

\item\textbf{Synthetic Data}:
We generate synthetic datasets with different distributions on the
independent (or explanatory) variables. Specifically, we consider
uniform, normal, and exponential distributions on the 
independent variables. We also vary the noise distribution of the
dependent variable.

\item\textbf{Opportunity Insights (OI) Data}:
We use a simulated version of the data used by the Opportunity 
Insights team (an economics research lab) to release the
Opportunity Atlas tool, primarily used to predict social and
economic mobility. We chose to use these datasets since they come
from a real deployment of privacy-preserving statistics.
\footnote{
See~\citep{NBERw25147, 10.1093/qje/qjz042} for a more detailed description of the use of the
Opportunity Atlas tool in predicting social and economic
mobility.~\citep{ChettyF19, AMSSV20} evaluate privacy-protection methods
on Opportunity Atlas data.
}
The census tract-level datasets from these states can have a very small number of
datapoints.
\item\textbf{Bike Sharing Dataset}:
We use a real-world dataset publicly available in the
UCI machine learning repository.
The dataset consists of daily and hourly counts
(with other information such as seasonal and
weather information) of
bike rentals in the Capital bikeshare system in years
2011 and 2012.
\end{enumerate}

Our experimental findings are as follows:
\begin{enumerate}
\item \textbf{Significance}: Across a variety of experimental settings,
the significance is below the target significance level of 0.05.
Thus, we have a high confidence that we will not falsely reject the null hypothesis, 
should the null hypothesis be true. Our tests are designed to be conservative
in the sense that they err on the
side of failing to reject the null 
(e.g., when the DP estimate of a variance is negative).

\item \textbf{Power}: Consistently, the power of our tests increase as we increase sample sizes
(from hundreds up to tens of thousands) or as we relax the privacy parameter.
The behavior of our DP tests tends toward that of the non-DP tests.
The power of the DP OLS linear relationship tester increases as the
slope of the model increases and as the noise in the dependent variable decreases. But
when the DP estimate of the noise in the dependent variable is negative,
the tests err on the side of failing to reject the null, leading to a lower power.
The power of the mixture model tests also increases as the difference between 
the slopes in the two groups increases. And the more uneven the group sizes are, the
lower the power.

\item\textbf{Alternative Method 1}:
We compare our DP linear
relationship tester, based on the $F$-statistic, to a test that
builds on a
DP parametric bootstrap method for confidence interval
estimation due to
Ferrando, Wang, and Sheldon~\citep{FWS20}. They prove that
these intervals are consistent
(in the asymptotic regime) and experimentally
show that these intervals have good coverage.
We can rely on such confidence intervals to
decide to reject or fail to reject the null hypothesis.
Such methods achieve the desired significance levels.
However, we observe that
the method is less powerful
than the $F$-statistic approach.
This behavior could be attributed to the differences in
the bootstrap process:
whereas we use estimates of sufficient statistics under the
null in the bootstrap procedure,
Ferrando et al.~\citep{FWS20} 
uses the entirety of the sufficient
statistics estimated for the parametric model.

\item\textbf{Alternative Method 2}:
Inspired by the DP regression estimators of
~\citep{DworkL09, AMSSV20},
we also show how to reduce linear relationship 
testing to
the problem of
testing if data is generated from a Bernoulli
distribution with equal chance of outputting heads or tails.
This problem has simple DP tests and has been solved
optimally by Awan and Slavkovic~\citep{Awan_Slavkovic_2020} 
for pure DP
(whereas we use zCDP).
The resulting linear relationship tester is
nonparametric in that, in contrast to the $F$-statistic,
it does not assume that the residuals are normally
distributed. We find that this nonparametric test outperforms
our DP $F$-statistic on smaller privacy-loss
parameters or larger
slopes, but otherwise the $F$-statistic performs better.

\item\textbf{Alternative Method 3}:
For testing mixture models, we also give a reduction to
testing whether two groups have the same median,
which can be solved using
the DP nonparametric method of
Couch, Kazan, Shi, Bray, and Groce~\citep{CouchKSBG19}.
We find that this nonparametric test
has a higher power in the small dataset regime than the
DP $F$-statistic method. 
As the dataset
size increases or the difference in slopes between the groups
increases, the gap closes. In addition, as the variance of
the independent variable increases,
the $F$-statistic method outperforms the nonparametric method.

\end{enumerate}

\subsection{Overview of Techniques}

We provide Algorithm~\ref{alg:mct}, 
a generic framework for DP Monte Carlo tests via a parametric bootstrap routine for estimating
sufficient statistics.
Algorithm~\ref{alg:mct} crucially relies
on $\dpstats$, a procedure that uses statistics of the independent and dependent variables to produce DP statistics. These statistics can be used to
decide to reject or fail to reject the null hypothesis. $\dpstats$ must satisfy DP---in our case $\rho$-zCDP (Zero-Concentrated Differential Privacy).
Algorithms~\ref{alg:t1} and~\ref{alg:t2}
are SSP (Sufficient Statistic Perturbation)
implementations
of DPSS for the linear relationship tester and mixture model tester, respectively. (See Section~\ref{sec:dplinear} for more details.)
Algorithm~\ref{alg:t3}
is a DP nonparametric test framework based on the Kruskal-Wallis test statistic for testing mixtures.
We use the standard Gaussian mechanism to make these algorithms satisfy zCDP although the Laplace mechanism could be used instead
(i.e., for pure DP). 
To make Algorithm~\ref{alg:t3}
DP, we randomly pair datapoints so that
the resulting transformation is 1-Lipschitz.
i.e., changing one datapoint will change a single 
slope estimate. See Section~\ref{sec:dpmixture} for more details.

Theorem~\ref{thm:dpf} shows that the DP $F$-statistic converges to the asymptotic distribution of its non-private counterpart, the chi-squared distribution.
As a corollary, the statistical power of the DP
$F$-statistic converges to the statistical power of the
non-private $F$-statistic.
To prove this result, we first show the $1/\sqrt{n}$ statistical rate of convergence for the DP coefficients. Then we reformulate both the $F$-statistic and the
DP $F$-statistic in terms of the $1/\sqrt{n}$ convergent quantities, including convergent functions of the Gram Matrix of the independent variable.
Next, we construct random variables whose
$\ell_2$-norm squared is distributed as the (non-central) chi-squared distribution. Then we prove that
DP analogues of the squared $\ell_2$-norm of
such random variables converge to the (non-central) chi-squared distribution.
Finally, using such random variables,
the reformulated DP $F$-statistic, 
reformulated $F$-statistic, 
and the continuous mapping theorem,
we combine the convergent quantities to prove Theorem~\ref{thm:dpf}.
See Section~\ref{sec:dpf} for more details.

\subsection{Other Related Work}

\paragraph{Differentially Private Linear Regression}

Sheffet~\citep{Sheffet17} considers hypothesis testing for ordinary least squares for
a specific test: testing for a linear relationship under the assumption that
the independent variable is drawn from a normal distribution.
Wang
~\citep{Wang18} focuses on using adaptive algorithms
for linear regression prediction.

$M$-estimators~\citep{huber2011robust}, 
motivated by the field of robust statistics~\citep{Huber64},
are a simple class of statistical
estimators that present a general approach to
parametric inference.
Dwork and Lei~\citep{DworkL09, Lei11} and
Chaudhuri and Hsu~\citep{ChaudhuriH12} present
differentially private $M$-estimators with
near-optimal statistical rates of convergence.
Avella-Medina~\citep{Medina20} generalizes the $M$-estimator
approach to differentially private statistical inference
using an empirical notion of influence functions to
calibrate the Gaussian mechanism.
Alabi, McMillan, Sarathy, Smith, and 
Vadhan~\citep{AMSSV20} proposed median-based estimators for linear regression
and evaluated their performance for prediction.
All of these previous works show connections between
robust statistics, $M$-estimators, and differential
privacy. The ordinary
least squares estimator is a classical $M$-estimator
for prediction. Other examples include sample
quantiles and the maximum likelihood estimation (MLE)
objective.
However, for differentially private
hypothesis testing, as we show in this work,
the optimal test statistic for DP linear regression
depends on statistical
properties of the dataset (such as variance and sample
size). We also present novel differentially private
test statistics that
converge in distribution to the asymptotic distribution
of the $F$-statistic.

Bernstein and Sheldon~\citep{BernsteinS19} take a Bayesian approach to linear regression
prediction and credible interval estimation.
Through the Bayesian lens, there is also work on how to approximately
bias-correct some DP estimators while providing some uncertainty
estimates in terms of (private) standard errors~\citep{Evans2019StatisticallyVI}.
As a motivation for differentially private
simple linear regression point and uncertainty estimation,
Bleninger, Dreschsler, Ronning~\citep{BleningerDR10}
show how an attacker could use
background information to reveal sensitive attributes
about data subjects used in a simple linear regression
analysis.

\paragraph{General Differentially Private Hypothesis Testing}

Gaboardi, Lim, Rogers, and Vadhan
~\citep{RogersVLG16} study hypothesis testing subject to differential privacy constraints.
The tests they consider are: (1) \textit{goodness-of-fit} tests on multinomial data to
determine if data was drawn from a multinomial distribution with a certain probability vector, and
(2) \textit{independence} tests for checking whether two categorical random variables are
independent of each other. Both tests use the chi-squared test statistic.
Rogers and Kifer
~\citep{RogersK17} develop new test statistics for differentially private hypothesis testing on
categorical data while maintaining a given Type I error.
Through the use of the subsample-and-aggregate
framework,
Barrientos et al.~\citep{Barrientos2017DifferentiallyPS}
compute univariate $t$-statistics by first
partitioning the data into $M$ disjoint subsets,
estimating the statistic on each subset,
truncating the statistic at some threshold $a$,
and then adding noise from a Laplace distribution
to the average of the
truncated $t$-statistics. Our framework requires a
clipping parameter (similar to $a$) but does not
require any others (e.g., the number of
subsets). As noted in that work, the
parameter $M$ plays a significant role in the
performance---and tuning---of their tests while our tests 
require no such parameter tuning on partitioning of the
data.

A subset of previous work~\citep{TaskC16, CampbellBRG18, SwanbergGGRGB19, CouchKSBG19} focus on differentially private independence tests
between a categorical
and continuous variables. Some of these works produce
nonparametric tests which require little or no distributional assumptions on the data generation process.
Specifically, Couch, Kazan, Shi, Bray, and Groce~\citep{CouchKSBG19} develop DP analogues of
rank-based nonparametric tests such as 
Kruskal-Wallis and Mann-Whitney signed-rank tests.
The Kruskal-Wallis test, for example, can be used to
determine whether the medians of two or more groups are
the same. We adapt their test to the setting of linear regression by using it to
compare the distributions of slopes between the two groups.
Wang, Lee, and Kifer~\citep{WangLK15} develop DP versions of likelihood ratio and chi-squared tests, showing a modified
equivalence between chi-squared and likelihood ratio tests.

In the space of differentially private hypothesis testing, 
previous work introduce
methods for differentially private identity and equivalence
testing of discrete 
distributions~\citep{AcharyaCFT19, AcharyaSZ18, AliakbarpourDKR19, AliakbarpourDR18}.
A differentially private version of the
log-likelihood ratio test for the Neyman-Pearson lemma has also been shown to
exist~\citep{CanonneKMSU19}.
Furthermore, 
Awan and Slavkovic~\citep{Awan_Slavkovic_2020}
derive uniformly most powerful DP tests for
simple hypotheses for binomial data.
Suresh~\citep{Suresh20} proposes a hypothesis test,
which can be made to satisfy differential privacy,
that is robust to distribution perturbations under
Hellinger distance. Sheffet and Kairouz, Oh, Viswanath
~\citep{Sheffet18, KairouzOV16} also consider
hypothesis testing, although in the local setting of
differential privacy. 

\section{Preliminaries and Notation}
\label{sec:prelims}

\subsection{Differential Privacy}

For the definitions below, we say that two databases $\bx$ and $\bx'$ are
neighboring, expressed notationally as $d(\bx, \bx')=1$ for any $\bx, \bx'\in\calX^n$,
if
$\bx$ differs from $\bx'$ in exactly one row.

\begin{definition}[Differential Privacy~\citep{DworkMNS06, DKMMN06}]
Let $\calM:\calX^n\rightarrow\calR$ be a (randomized) mechanism.
For any $\eps\geq 0, \delta\in[0, 1]$, we say $\calM$ is 
\textbf{$(\eps, \delta)$-differentially private}
if for all neighboring databases $\bx, \bx'\in\calX^n$,
$d(\bx, \bx')=1$ and every $S\subseteq\calR$,
$$
\pr[\calM(\bx)\in S] \leq e^\eps\cdot\pr[\calM(\bx')\in S] + \delta.
$$
\end{definition}

If $\delta = 0$, then we say $\calM$ is $\eps$-DP, sometimes referred to as \textbf{pure}
differential privacy.
Typically, $\eps$ is a small constant (e.g., $\eps\in[0.1, 1]$) and $\delta\leq 1/\poly(n)$
is cryptographically small.

\begin{definition}[Zero-Concentrated Differential Privacy (zCDP)~\citep{BunS16}]
Let $\calM:\calX^n\rightarrow\calR$ be a (randomized) mechanism.
For any neighboring databases $\bx, \bx'\in\calX^n$, 
$d(\bx, \bx')=1$, we say
$\calM$ satisfies \textbf{$\rho$-zCDP} if for all $\alpha\in (1, \infty)$,
$$
D_\alpha(\calM(\bx)\Vert \calM(\bx')) \leq \rho\cdot\alpha,
$$
where $D_\alpha(\calM(\bx)\Vert \calM(\bx'))$ is the R\'enyi divergence of order $\alpha$ between
the distribution of $\calM(\bx)$ and the distribution of $\calM(\bx')$.
\footnote{A related differential privacy notion, in terms of the
R{\'{e}}nyi divergence, is given in~\citep{Mironov17}.}
\end{definition}

In this paper, we will primarily use $\rho$-zCDP as our definition of differential privacy, adding noise from a Gaussian distribution
to ensure zCDP.

\subsection{General Hypothesis Testing}

The goal of hypothesis testing is to infer, based on data, which of two hypothesis, $H_0$ (the null hypothesis) or $H_1$ (the alternative hypothesis), should be rejected.

Let $P_\theta$ be a family of probability distributions parameterized by $\theta\in\Omega$.
For some unknown parameter $\theta\in\Omega$, let $Z\sim P_\theta$ be the observed data.
Then the two competing hypothesis are:
$$
H_0 : \theta\in\Omega_0 \text { vs. } H_1 : \theta\in\Omega_1,
$$
where $(\Omega_0, \Omega_1)$ form a partition of $\Omega$.

A \textbf{test statistic} $T$ is random variable that is a function of the
observed data $Z\sim P_\theta$. $T$ can be used to decide whether to reject or fail
to reject the null hypothesis.
A \textbf{critical region} $S$ is the set of values for the test statistic 
$T$ (or correspondingly
for the observed data) for which the null hypothesis will be rejected. It
can be used to completely determine a test of $H_0$ 
versus $H_1$ as follows: We reject $H_0$ if $Y\in S$ and fail to reject
$H_0$ if $Y\notin S$.

Sometimes, 
\textit{external randomization}
might help with choosing 
between hypothesis $H_0$ and $H_1$
~\citep{Edgington2011, keener2010theoretical}. 
By external randomness, we mean randomness not 
inherent in the sample or data collection process.
In order to discriminate between hypothesis $H_0$ and $H_1$, we can
define a notion of a critical function that can indicate the
degree to which a test statistic is
within a critical region.
A \text{critical
function} $\phi$ with range in $[0, 1]$ characterize
randomized hypothesis tests. A nonrandomized test with
critical region $S$ can thus be specified as
$\phi = 1_S$. Conversely, if $\phi(y)$ is always 0 or 1
for all $y$ then the critical region is
$S=\{y\;:\;\phi(y) = 1\}$ for this nonrandomized test. 
An advantage of allowing randomization (even without DP constraints) is that
convex combinations of nonrandomized tests are not possible,
but convex combinations of randomized tests are 
possible.
i.e., if $\phi_1, \phi_2$ are critical functions and $t\in(0, 1)$,
then $t\phi_1 + (1-t)\phi_2$ is also a critical function so that
the set of all critical functions form a convex set.
Furthermore, nontrivial differentially private tests must be randomized.

For any $\theta\in\Omega$, the ideal test would tell us when $\theta\in\Omega_0$ and
when $\theta\in\Omega_1$. This can be described by a \textbf{power function} $R(\cdot)$, which gives the
chance of rejecting $H_0$ as a function of $\theta\in\Omega$:
$$
R(\theta) = \pr_\theta(Y\in S),
$$
for any critical region $S$.

A ``perfect'' hypothesis test would have
$R(\theta) = 0$ for every $\theta\in\Omega_0$ and
$R(\theta) = 1$ for every $\theta\in\Omega_1$. But this is generally impossible
given only the ``noisy'' observed data $Z\sim P_\theta$.

A \textbf{significance level $\alpha$} can be defined as
$$
\alpha = \sup_{\theta\in\Omega_0} \pr_\theta(Y\in S).
$$

In other words, the level $\alpha$ is the worst chance of incorrectly
rejecting $H_0$. Ideally, we want
tests that have a small chance of error when $H_0$
should not be rejected. The \textbf{$p$-value} is
the probability of finding, based on observed data, test statistics
at least as extreme as when the null hypothesis holds.
That is, if $T$ is the test statistic function and
$t$ is the observed test statistic, then the (one-sided)
$p$-value is $\pr[T \geq t\mid H_0]$.

\subsection{Convergence and Limits}

\begin{definition}[Limit of Sequence]
A sequence
$\{x_n\}$ \textbf{converges} toward the limit $x$,
denoted $x_n\rightarrow x$, if
$$
\forall \eps > 0,
\lim_{n\rightarrow\infty}\pr\left[|x_n - x| > \eps\right] = 0.
$$

\label{def:seqconverge}
\end{definition}

We will use Definition~\ref{def:seqconverge}
to show convergence of random variables
(in probability or distribution).

\begin{definition}[Convergence in Probability]
A sequence of random variables
$\{X_n\}$ \textbf{converges in probability}
toward random variable $X$,
denoted $X_n\xrightarrow{P} X$, if
$$
\forall \eps > 0,
\lim_{n\rightarrow\infty}\pr\left[|X_n - X| > \eps\right] = 0.
$$
\label{def:prob}
\end{definition}

Convergence in Probability (Definition~\ref{def:prob})
for a sequence of random variables $X_1, X_2, \ldots$
toward random variable $X$ can be shown
if for all $\eps > 0, \delta > 0$, there exists
$N(\eps, \delta) = N$ such that
for all $n\geq N$,
$\pr[|X_n - X| > \eps] < \delta$.

\begin{definition}[Convergence in Distribution]
A sequence of random variables
$\{X_n\}$ \textbf{converges in distribution}
toward random variable $X$,
denoted $X_n\xrightarrow{D} X$, if
$$
\lim_{n\rightarrow\infty}F_n(x) = F(x),
$$
for all $x\in\reals$ at which $F$ is continuous.
$F_n, F$ are the cumulative distribution functions
for $X_n, X$ respectively.

\label{def:dist}
\end{definition}

We can
generalize Definitions~\ref{def:prob} and~\ref{def:dist}
to random vectors and matrices
(beyond scalars) as follows:
if $A\in\reals^{K\times L}$ is a random vector,
then $A_n\xrightarrow{P}A$, 
$A_n\xrightarrow{D}A$ denotes entry-wise
convergence in probability and distribution,
respectively.
Also, for any distribution $\calD$ 
(e.g., $\calD = \chi^2_k$), we write
$A_n\xrightarrow{D} \calD$ as a shorthand to mean that
$A_n\xrightarrow{D} A$ for random variables
$A_n, A$ such that $A$ follows $\calD$ (i.e.,
$A\sim\calD$).
Similarly, $\calD_n\xrightarrow{D} \calD$ implies that
$A_n\xrightarrow{D} A$ for $A_n\sim\calD_n$ and
$A\sim\calD$.

\begin{lemma}
Let $\{X_n\}$ and $\{Y_n\}$ be a sequence of random
vectors and $X$ be a random vector. Then:
\begin{enumerate}
\item If $X_n\xrightarrow{D}X$ and $X_n-Y_n\xrightarrow{P}0$, then
$Y_n\xrightarrow{D}X$.
\item If $X_n\xrightarrow{P} X$, then
$X_n\xrightarrow{D} X$.
\item For a constant $c\in\reals$, if $X_n\xrightarrow{D}c$,
then $X_n\xrightarrow{P}c$.
\end{enumerate}
\label{lem:helperprob}
\end{lemma}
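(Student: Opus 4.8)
The plan is to treat all three parts through a single workhorse: a ``sandwich'' of the relevant CDF between shifted copies of the limiting CDF $F$, obtained by splitting each probability according to whether the controlling difference is within a tolerance $\eps$. The logical dependencies make this efficient, so I would prove part~1 first, obtain part~2 as an immediate special case of part~1, and then handle part~3 separately, since convergence in distribution to a constant does not fit the template of part~1 but instead exploits the degenerate shape of the limiting CDF.

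For part~1, fix a continuity point $x$ of $F$ and a tolerance $\eps > 0$, and write $F$ for the CDF of $X$. I would decompose the event $\{Y_n \leq x\}$ according to whether $|X_n - Y_n| \leq \eps$ or not. On the first piece, $Y_n \leq x$ forces $X_n \leq x + \eps$, which yields $\pr[Y_n \leq x] \leq \pr[X_n \leq x+\eps] + \pr[|X_n - Y_n| > \eps]$. Running the symmetric argument starting from $\{X_n \leq x - \eps\}$ gives the matching lower bound $\pr[X_n \leq x-\eps] - \pr[|X_n - Y_n| > \eps] \leq \pr[Y_n \leq x]$. Since $X_n - Y_n \xrightarrow{P} 0$, the error terms vanish as $n \to \infty$; and provided $x \pm \eps$ are continuity points of $F$, the hypothesis $X_n \xrightarrow{D} X$ lets me replace $\pr[X_n \leq x \pm \eps]$ by $F(x \pm \eps)$ in the limit. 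This sandwiches $\liminf_n$ and $\limsup_n$ of $\pr[Y_n \leq x]$ between $F(x-\eps)$ and $F(x+\eps)$; letting $\eps \to 0$ and using continuity of $F$ at $x$ collapses both to $F(x)$, which is exactly $Y_n \xrightarrow{D} X$.

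Part~2 then follows by applying part~1 with the roles reshuffled: take the constant-in-$n$ sequence $\widetilde{X}_n \equiv X$, which trivially satisfies $\widetilde{X}_n \xrightarrow{D} X$, in place of the first sequence, and the given $X_n$ in place of $Y_n$. The hypothesis $X_n \xrightarrow{P} X$ is precisely $\widetilde{X}_n - X_n \xrightarrow{P} 0$, so part~1 delivers $X_n \xrightarrow{D} X$ with no further work. For part~3, the limiting CDF $F$ of the constant $c$ is the step function that is $0$ below $c$ and $1$ at and above $c$, so every point except $c$ is a continuity point. I would bound $\pr[|X_n - c| > \eps] \leq \pr[X_n \leq c - \eps] + \bigl(1 - \pr[X_n \leq c + \eps]\bigr)$ and note that, since $c \pm \eps$ are continuity points, $X_n \xrightarrow{D} c$ forces $\pr[X_n \leq c-\eps] \to F(c-\eps) = 0$ and $\pr[X_n \leq c+\eps] \to F(c+\eps) = 1$; hence the whole bound tends to $0$, giving $X_n \xrightarrow{P} c$.

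The one step needing genuine care rather than routine manipulation is the handling of continuity points in part~1. The substitution $\pr[X_n \leq x \pm \eps] \to F(x \pm \eps)$ is only licensed when $x \pm \eps$ are themselves continuity points of $F$, so I must restrict $\eps$ to range over values for which this holds. Because the discontinuities of a CDF are at most countable, continuity points are dense, and I can let $\eps \to 0$ along such a sequence; combined with monotonicity of $F$ this loses nothing. The other point to keep straight is the order of limits: one must send $n \to \infty$ first to kill the $\pr[|X_n - Y_n| > \eps]$ terms and obtain the $\liminf$/$\limsup$ sandwich, and only then send $\eps \to 0$. Beyond this bookkeeping, the vector and matrix versions reduce to the scalar statements entrywise, consistent with the paper's componentwise conventions for $\xrightarrow{P}$ and $\xrightarrow{D}$.
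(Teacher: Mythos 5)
Your proposal is correct, but it takes a different route from the paper in the sense that the paper does not actually prove this lemma at all: its entire proof is the single line ``Follows from Theorem 2.7 in van der Vaart (1998),'' i.e.\ an appeal to the standard catalogue of relations between modes of convergence. What you have written out is essentially the textbook proof of that cited result, made self-contained. Your part~1 is the classical ``converging together'' sandwich
$\pr[X_n \leq x-\eps] - \pr[|X_n - Y_n| > \eps] \leq \pr[Y_n \leq x] \leq \pr[X_n \leq x+\eps] + \pr[|X_n - Y_n| > \eps]$,
your part~2 correctly exploits the logical shortcut of applying part~1 to the constant sequence $\widetilde{X}_n \equiv X$, and your part~3 uses the degenerate CDF of the constant $c$ in the standard way. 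You also handle the two points that are most often glossed over: restricting $\eps$ to values for which $x \pm \eps$ are continuity points (legitimate since the discontinuities of a CDF are countable, hence continuity points are dense) and taking $n \to \infty$ before $\eps \to 0$. The entrywise reduction for vectors is consistent with the paper's stated componentwise conventions for $\xrightarrow{P}$ and $\xrightarrow{D}$, though it is worth noting that this is exactly the convention that makes the reduction work -- for the usual (joint) notion of weak convergence of random vectors, marginal convergence alone would not suffice. What your approach buys is a fully self-contained argument at the cost of about a page; what the paper's citation buys is brevity at the cost of outsourcing the proof.
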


\begin{proof}
Follows from
Theorem 2.7 in~\citep{vaart_1998}.
\end{proof}

Lemma~\ref{lem:helperprob} is a helper lemma that is
useful for proving convergence results, especially on
DP estimates.

In later sections, we will show that the differentially private $F$-statistic
converges, in distribution, to a chi-squared distribution
(as does the non-DP $F$-statistic). This convergence result holds
under certain conditions.

\section{Hypothesis Testing for Linear Regression}

In this section, we review the theory of (non-private) hypothesis testing in the
general linear model.
We will consider hypothesis testing in the linear model
$$
Y = X\beta + \be,
$$
where $X\in\reals^{n\times p}$ is a matrix of known constants, 
$\beta\in\reals^p$ is the parameter vector that determines the 
linear relationship between $X$ and the dependent variable $Y$, and
$\be$ is a random vector such that for all $i\in[n]$, $\E[e_i] = 0$,
$\var[e_i] = \sigma_e^2$. Furthermore, for all $i\neq j\in[n]$, $\cov(e_i, e_j) = 0$.

Note that the simple linear regression model,
$y_i = \beta_2 + \beta_1\cdot x_i + e_i$ for scalars $x_i, y_i$ and $e_i$
$\forall i\in[n]$,
can be cast as a linear model as follows:
$X\in\reals^{n\times 2}$ where
\begin{equation}
X = \begin{pmatrix}
1 & x_1\\
1 & x_2\\
\cdots & \cdots\\
1 & x_{n-1}\\
1 & x_n\\
\end{pmatrix}.
\label{eq:design}
\end{equation}

We will consider the general linear model:
$Y\sim\calN(X\beta, \sigma_e^2I_{n\times n})$, where
$I_{n\times n}$ is the $n\times n$ identity matrix.
Let $\omega$ be an $r$-dimensional
linear subspace of $\reals^p$ and $\omega_0$ be a $q$-dimensional linear subspace of $\omega$ such that
$0\leq q < r$.
We will consider hypothesis tests of the form:
\begin{enumerate}
\item $H_0$: $\beta\in\omega_0$.
\item $H_1$: $\beta\in\omega - \omega_0$.
\end{enumerate}

Let $\hat\beta$ and $\hat\beta^N$ denote the least squares estimates under the
alternative and null hypothesis respectively. 
In other words,
$$
\hat\beta^N = \argmin_{z\in\omega_0}\norm{Xz - Y}^2,
\quad
\hat\beta = \argmin_{z\in\omega}\norm{Xz - Y}^2.
$$

The \textbf{test statistic} we will use is equivalent to the generalized likelihood ratio test statistic
\begin{align}
T &=  \left(\frac{n-r}{r-q}\right)\cdot\frac{\norm{Y - X\hat\beta^N}^2 - \norm{Y - X\hat\beta}^2}{\norm{Y - X\hat\beta}^2} \\
&= \left(\frac{n-r}{r-q}\right)\cdot\frac{\norm{X\hat\beta - X\hat\beta^N}^2}{\norm{Y - X\hat\beta}^2}\\
&= \frac{1}{r-q}\cdot\frac{\norm{X\hat\beta - X\hat\beta^N}^2}{S^2},
\label{eq:glrt}
\end{align}
where $S^2 = \norm{Y - X\hat\beta}^2/(n-r)$. The vectors $Y-X\hat\beta$ and $X\hat\beta - X\hat\beta^N$ can be shown to
be orthogonal, so that $\norm{Y-X\hat\beta^N}^2 = \norm{Y-X\hat\beta}^2 + \norm{X\hat\beta-X\hat\beta^N}^2$ by the Pythagorean theorem~\citep{keener2010theoretical}.

When $r-q = 1$, 
this test is \textit{uniformly most powerful unbiased}
and for $r-q > 1$, the test
is most powerful amongst all tests that satisfy certain symmetry restrictions
~\citep{keener2010theoretical}.

\begin{theorem}

For every $n\in\naturals$ with $n > r$, let
$X = X_n\in\reals^{n\times p}$ be the design matrix.
Under the general linear model 
$Y = Y_n\sim\calN(X_n\beta, \sigma_e^2I_{n\times n})$,
$$T = T_n\sim F_{r-q, n-r}(\eta_n^2),\quad \eta_n^2 = \frac{\norm{X_n\beta - X_n\beta^N}^2}{\sigma_e^2},
$$
where $F_{n, m}$ is the $F$-distribution with parameters $n$, $m$, 
$\beta^N = \E[\hat\beta^N]$,
$q$ is the dimension of $\omega_0$, and $r$ is the dimension of $\omega$ with $0\leq q < r$.

Furthermore,
\begin{enumerate}
\item
$$\norm{Y_n-X_n\hat\beta}^2\sim\calX^2_{n-r}\sigma_e^2,\quad\norm{X_n\hat\beta-X_n\hat\beta^N}^2\sim\calX^2_{r-q}(\eta_n^2)\sigma_e^2.$$
\item
If there exists $\eta\in\reals$ such that $\frac{\norm{X_n\beta - X_n\beta^N}^2}{\sigma_e^2}\rightarrow\eta^2$, then
$$
T = T_n\sim F_{r-q, n-r}(\eta_n^2)
\xrightarrow{D} \frac{\chi^2_{r-q}(\eta^2)}{r-q}.
$$
\item We have
$$\frac{\norm{Y_n-X_n\hat\beta}^2}{n-r}\xrightarrow{P}\sigma_e^2.$$
\end{enumerate}

The values $\beta = \E[\hat\beta], \beta^N = \E[\hat\beta^N]$ are
the expected values of our parameter estimates under
the alternative and null hypotheses respectively.

\label{thm:f}
\end{theorem}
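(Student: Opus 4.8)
The plan is to reduce everything to the geometry of orthogonal projections of a spherical Gaussian, which is the classical route through Cochran's theorem. Write $\mu = X_n\beta$ and let $V = X_n\omega \subseteq \reals^n$ and $V_0 = X_n\omega_0 \subseteq V$ be the image subspaces, of dimensions $r$ and $q$ respectively; let $P_V, P_{V_0}$ denote the orthogonal projections onto them, and set $P_{\Delta} = P_V - P_{V_0}$ (the projection onto $V \cap V_0^\perp$, rank $r-q$) and $P_\perp = I - P_V$ (rank $n-r$). Since least squares minimizes $\norm{X_n z - Y}$ over the respective subspaces, the fitted vectors are exactly $X_n\hat\beta = P_V Y$ and $X_n\hat\beta^N = P_{V_0}Y$, so that $Y - X_n\hat\beta = P_\perp Y$ and $X_n\hat\beta - X_n\hat\beta^N = P_\Delta Y$. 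Because $V \perp V^\perp$ and $V\cap V_0^\perp \subseteq V$, the vectors $P_\perp Y$ and $P_\Delta Y$ lie in orthogonal subspaces, which recovers the Pythagorean identity used to rewrite $T$ and, more importantly, makes $P_\perp Y$ and $P_\Delta Y$ independent Gaussians.

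Next I would invoke the standard fact that for $Y \sim \calN(\mu, \sigma_e^2 I_n)$ and an orthogonal projection $P$ of rank $k$, $\norm{PY}^2/\sigma_e^2 \sim \chi^2_k(\norm{P\mu}^2/\sigma_e^2)$. Under the maintained assumption $\beta \in \omega$ (true under both $H_0$ and $H_1$), $\mu \in V$, so $P_\perp \mu = 0$ and $\norm{Y_n - X_n\hat\beta}^2/\sigma_e^2 = \norm{P_\perp Y}^2/\sigma_e^2 \sim \chi^2_{n-r}$, a central chi-squared. The crux is the noncentrality parameter of the numerator. Taking expectations gives $X_n\beta^N = X_n\E[\hat\beta^N] = \E[P_{V_0}Y] = P_{V_0}\mu$ while $X_n\beta = \mu = P_V\mu$, so $X_n\beta - X_n\beta^N = P_\Delta\mu$; hence the noncentrality of $\norm{P_\Delta Y}^2/\sigma_e^2$ is exactly $\norm{P_\Delta\mu}^2/\sigma_e^2 = \norm{X_n\beta - X_n\beta^N}^2/\sigma_e^2 = \eta_n^2$, giving $\norm{X_n\hat\beta - X_n\hat\beta^N}^2/\sigma_e^2 \sim \chi^2_{r-q}(\eta_n^2)$. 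Together with the residual distribution, these are the two statements of item 1. I expect this identification—matching the definition $\beta^N = \E[\hat\beta^N]$ to the projection $P_{V_0}\mu$—to be the one genuinely nonroutine step.

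With both facts in hand, the main claim is immediate: writing $A = \norm{X_n\hat\beta - X_n\hat\beta^N}^2/\sigma_e^2$ and $B = \norm{Y_n - X_n\hat\beta}^2/\sigma_e^2$, we have independent $A \sim \chi^2_{r-q}(\eta_n^2)$ and $B \sim \chi^2_{n-r}$, and $T = (A/(r-q))/(B/(n-r))$, which is by definition $F_{r-q,n-r}(\eta_n^2)$.

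Finally, for the two limit statements I would use the law of large numbers together with Lemma~\ref{lem:helperprob}. Since $B \sim \chi^2_{n-r}$ is a sum of $n-r$ i.i.d.\ mean-one $\chi^2_1$ terms, $B/(n-r) \xrightarrow{P} 1$, so $\norm{Y_n - X_n\hat\beta}^2/(n-r) = \sigma_e^2\, B/(n-r) \xrightarrow{P} \sigma_e^2$, which is item 3. For item 2, if $\eta_n^2 \to \eta^2$ then $A \sim \chi^2_{r-q}(\eta_n^2) \xrightarrow{D} \chi^2_{r-q}(\eta^2)$ by continuity of the noncentral chi-squared in its noncentrality parameter (seen, e.g., via its Poisson-mixture representation or characteristic function); combining this with $B/(n-r) \xrightarrow{P} 1$ and Slutsky's theorem (the vector form of Lemma~\ref{lem:helperprob}) yields $T_n \xrightarrow{D} \chi^2_{r-q}(\eta^2)/(r-q)$, item 2.
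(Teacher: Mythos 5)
Your proposal is correct and is essentially the paper's own argument: the paper's proof (Theorem~\ref{thm:fstat}) chooses an orthonormal basis adapted to $\Omega_0 \subseteq \Omega$ and rotates $Y$ into coordinates $Z = O^T Y$, which is exactly a coordinate realization of your projections $P_{V_0}, P_\Delta, P_\perp$, and it identifies the noncentrality parameter and proves items 2 and 3 via the same weak-law-of-large-numbers and Slutsky steps you describe. The only cosmetic difference is that you phrase the decomposition in coordinate-free projection language while the paper works with the explicit rotated coordinates $Z_i$.
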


The proof of Theorem~\ref{thm:f} appears
in the Appendix as Theorem~\ref{thm:fstat}. 
Above, the \textbf{noncentral $F$-distribution} $F_{n, m}(\lambda)$,
with parameters $n, m$ and noncentrality parameter $\lambda$ is the
distribution of $\frac{\chi_n^2(\lambda)/n}{\chi_m^2/m}$, the ratio of two scaled chi-squared random variables.
$\chi^2_K(\lambda)$ is a random variable distributed according to
a chi-squared distribution with $K$
degrees of freedom and noncentrality parameter $\lambda$. 
That is, $\chi^2_K(\lambda)$ is distributed as the
squared length of a $\calN(v, I_{K\times K})$ vector where
$v\in\reals^K$ has length $\lambda$.
Also, $\chi^2_K\sim\chi^2_K(0)$.

\subsection{Testing a Linear Relationship in Simple Linear Regression Models}
\label{sec:testlinear}

Consider the model:
$y_i = \beta_2 + \beta_1\cdot x_i + e_i$,
where $e_i\sim\calN(0, \sigma_e^2)$ are i.i.d. random variables and $x_1, \ldots, x_n$ are constants that form the following design matrix for our problem
$$
X = \begin{pmatrix}
1 & x_1\\
1 & x_2\\
\cdots & \cdots\\
1 & x_{n-1}\\
1 & x_n
\end{pmatrix}.
$$

In this case, $\omega = \reals^2$ and
$\omega_0 = \{\beta\in\reals^2\,:\,\beta_1 = 0\}$. As a result, our 
hypothesis is:
\begin{enumerate}
\item $H_0$: $\beta_1 = 0$.
\item $H_1$: $\beta_1 \neq 0$.
\end{enumerate}
Note that $r = p = 2$ and $q = 1$.

Furthermore, let
$$
\beta = \begin{pmatrix}
\beta_2\\
\beta_1\\
\end{pmatrix},\quad
\hat\beta = \begin{pmatrix}
\hat\beta_2\\
\hat\beta_1\\
\end{pmatrix}.
$$

We use $\hat\beta^N$ to refer to the estimate of $\beta$ when the
null hypothesis is true (i.e., $\beta_1 = 0$) and
$\hat\beta$ be the estimate of $\beta$ when the alternative hypothesis holds.

For the calculations below, let
\begin{enumerate}
\item $\bx \defeq (x_1, x_2, \ldots, x_n)^T$, $\by \defeq (y_1, y_2, \ldots, y_n)^T$,
\item $\bar{x} \defeq \frac{1}{n}\sum_{i=1}^nx_i$, $\bar{y} \defeq \frac{1}{n}\sum_{i=1}^ny_i$,
\item $\XXm \defeq \frac{1}{n}\sum_{i=1}^nx_i^2$, $\XYm \defeq \frac{1}{n}\sum_{i=1}^n x_i y_i$,
\item $\widehat{\sigma^2_{xy}} \defeq \XYm - \bar{x}\cdot\bar{y}$, and
$\widehat{\sigma^2_x} \defeq \XXm - \bar{x}^2$.
\end{enumerate}

We can then obtain the sufficient statistics
\begin{equation}
X^TX = \begin{pmatrix}
n & n\bar{x}\\
n\bar{x} & \XX\\
\end{pmatrix},\,\,
X^TY = \begin{pmatrix}
n\bar{y}\\
\XY\\
\end{pmatrix},
\label{eq:xtx}
\end{equation}
so that under the alternative hypothesis, the least squares estimate is
$$
\hat\beta = \argmin_{\beta\in\omega}\norm{Y - X\beta}^2 = (X^TX)^{-1}X^TY,
$$
assuming that $X^TX$ is invertible which happens iff
$\bx$ is not the constant vector (so that 
det($X^TX$) = $n^2\XXm - n^2\bar{x}^2 = n^2\cdot\widehat{\sigma^2_x} > 0$).
Assuming invertibility, we have
\begin{equation}
(X^TX)^{-1} = \frac{1}{n^2\XXm-n^2\bar{x}^2}\begin{pmatrix}
\XX & -n\bar{x}\\
-n\bar{x} & n\\
\end{pmatrix}.
\label{eq:inv}
\end{equation}

Thus, the least squares estimate under the alternative hypothesis
is
\begin{align*}
\hat\beta &= (X^TX)^{-1}X^TY \\
&=  \frac{1}{n^2\cdot\XXm-n^2\cdot\bar{x}^2}\begin{pmatrix}n^2\cdot\XXm\cdot\bar{y} - n^2\cdot\bar{x}\cdot\XYm\\-n^2\cdot\bar{x}\cdot\bar{y} + n^2\cdot\XYm\end{pmatrix}
= \begin{pmatrix}\hat\beta_2\\\hat\beta_1\end{pmatrix},
\end{align*}
and further simplification results in the following slope and intercept estimates:
$$
\hat\beta_1 = \frac{\XYm - \bar{x}\cdot\bar{y}}{\XXm - \bar{x}^2} = \frac{\widehat{\sigma^2_{xy}}}{\widehat{\sigma^2_x}},$$
$$
\hat\beta_2 = \bar{y}-\hat\beta_1\bar{x} = \frac{\bar{y}\cdot\XXm - \bar{x}\cdot\XYm}{\widehat{\sigma^2_x}}.
$$

The square of residuals is $\norm{Y - X\hat\beta}^2$ and
an (unbiased) estimate of $\sigma_e^2$ is $S^2 = \frac{\norm{Y - X\hat\beta}^2}{n-2}$.

Also, we can derive $\hat\beta^N$ as follows
$$\hat\beta^N = \argmin_{\beta\in\omega_0}\norm{Y - X\beta}^2 =
\begin{pmatrix}\bar{y}\\0\end{pmatrix} = \begin{pmatrix}\hat\beta_2^N\\0\end{pmatrix} $$
so that
\begin{align*}
&\norm{X\hat\beta - X\hat\beta^N}^2 = 
\sum_{i=1}^n(\hat\beta_2 + \hat\beta_1x_i - \hat\beta_2^N)^2 \\
&= \sum_{i=1}^n(\bar{y} - \hat\beta_1\bar{x} + \hat\beta_1x_i - \bar{y})^2 =
\hat\beta_1^2\sum_{i=1}^n(x_i - \bar{x})^2 \\
&= 
\hat\beta_1^2\cdot n\cdot\widehat{\sigma^2_x}.
\end{align*}

As a result, the test statistic $T$ is
$$
T = \left(\frac{n-r}{r-q}\right)\frac{\norm{X\hat\beta - X\hat\beta^N}^2}{\norm{Y - X\hat\beta}^2} = \frac{\hat\beta_1^2}{S^2}\cdot n\cdot\widehat{\sigma^2_x}.
$$

\noindent\textbf{Level-$\alpha$ Test}: Under $H_0$, $T\sim F_{1, n-2}$
since by Theorem~\ref{thm:f}, the centrality parameter is $\eta^2 = 0$.
The level-$\alpha$ test will then reject the null if
$T$ is greater than the upper $\alpha$th quantile of this $F$-distribution,
$F_{\alpha, 1, n-2}$.

In other words, we will reject the null if
$$
\frac{\hat\beta_1^2}{S^2}\cdot n\cdot\widehat{\sigma^2_x} > F_{\alpha, 1, n-2}.
$$

We see that the chance of rejecting the null increases as:
\begin{enumerate}
\item $\hat\beta_1^2$, the square of the slope estimate, increases.
\item $\widehat{\sigma^2_x}$ increases.
\item $n$ increases.
\item $S^2$ decreases.
\end{enumerate}

\noindent\textbf{Power}:
The power is the chance that a random variable distributed as
$F_{1, n-2}(\eta^2)$ exceeds $F_{\alpha, 1, n-2}$ where the centrality 
parameter is $\eta^2 = \frac{\beta_1^2}{\sigma_e^2}\cdot n\cdot\widehat{\sigma^2_x}$.

In other words, the power of the test is
$$
\pr\left[F_{1, n-2}\left(\frac{\beta_1^2}{\sigma_e^2}\cdot n\cdot\widehat{\sigma^2_x}\right) > F_{\alpha, 1, n-2}\right],
$$
where the probability is over the draws of the $F$-distribution.

\subsection{Testing for Mixtures in Simple Linear Regression Models}

The goal of testing mixtures is to detect the presence of
sub-populations.
Consider the model where $n = n_1 + n_2, n_1, n_2 > 0$, $\beta_1, \beta_2\in\reals$
with the following generation model:
\begin{itemize}
\item $y_i = \beta_1\cdot x_i + e_i$ for $i\in[n_1]$.
\item $y_i = \beta_2\cdot x_i + e_i$ for $i\in\{n_1 + 1, \ldots, n\}$.
\end{itemize}
where $e_i\sim\calN(0, \sigma_e^2)$ are i.i.d. random variables and $x_1, \ldots, x_n$ are constants that form the following design matrix for our problem
$$
X = \begin{pmatrix}
x_1 & 0\\
\cdots & \cdots\\
x_{n_1} & 0\\
0 & x_{n_1 + 1}\\
\cdots & \cdots\\
0 & x_{n}
\end{pmatrix}.
$$
Note that $X$ is of full rank (except if all the $x_i$'s are 0 either for all $i\in[n_1]$ or
for all $i\in\{n_1 + 1, \ldots, n\}$). Furthermore, $r = p = 2$.

In this case, $\omega = \reals^2$ and
$\omega_0 = \{\beta\in\reals^2\,:\,\beta_1 = \beta_2\}$. As a result, our 
hypothesis is:
\begin{enumerate}
\item $H_0$: $\beta_1 = \beta_2$.
\item $H_1$: $\beta_1 \neq \beta_2$.
\end{enumerate}

Furthermore, let
$$
\beta = \begin{pmatrix}
\beta_1\\
\beta_2\\
\end{pmatrix},\quad
\hat\beta = \begin{pmatrix}
\hat\beta_1\\
\hat\beta_2\\
\end{pmatrix}.
$$

We use $\hat\beta^N$ to refer to the estimate of $\beta$ when the
null hypothesis is true (i.e., $\beta_1 = \beta_2$) and
$\hat\beta$ be the estimate of $\beta$ when the alternative hypothesis holds.

For the calculations below, let $n_2 = n-n_1$ and
\begin{itemize}
\item 
$\XXAm = \frac{1}{n_1}\sum_{i=1}^{n_1}x_i^2, \XXBm = \frac{1}{n_2}\sum_{i=n_1 + 1}^{n}x_i^2, \XXm = \frac{1}{n}\sum_{i=1}^nx_i^2$.
\item 
$\XYAm = \frac{1}{n_1}\sum_{i=1}^{n_1}x_iy_i, \XYBm = \frac{1}{n_2}\sum_{i=n_1 + 1}^{n}x_iy_i, \XYm = \frac{1}{n}\sum_{i=1}^nx_iy_i$.
\end{itemize}
We can then obtain
$$
X^TX = \begin{pmatrix}
\XXA & 0\\
0 & \XXB\\
\end{pmatrix},
X^TY = \begin{pmatrix}
\XYA\\
\XYB\\
\end{pmatrix},
$$
so that, assuming $\XXAm, \XXBm > 0$, we have
$$
\hat\beta = (X^TX)^{-1}X^TY = \begin{pmatrix}
\XYAm/\XXAm\\
\XYBm/\XXBm\\
\end{pmatrix}.
$$

Furthermore, 
$$
\hat\beta^N = \begin{pmatrix}
\XYm/\XXm\\
\XYm/\XXm\\
\end{pmatrix},
$$
since under the null hypothesis ($\beta_1 = \beta_2$),
the design matrix ``collapses'' to
$$
X_0 = \begin{pmatrix}
x_1\\
\cdots\\
x_{n_1}\\
x_{n_1 + 1}\\
\cdots\\
x_{n}
\end{pmatrix},
$$
so that $X_0^TX_0 = \sum_{i=1}^nx_i^2 = \XX$ and
$X_0^TY = \sum_{i=1}^nx_iy_i = \XY$.

The squares of residuals is $\norm{Y - X\hat\beta}^2$ and
an (unbiased) estimate of $\sigma_e^2$ is $S^2 = \frac{\norm{Y - X\hat\beta}^2}{n-2}$.

\begin{lemma}
$$
\norm{X\hat\beta - X\hat\beta^N}^2 = \frac{\XXA\XXB}{\XX}(\hat\beta_1 - \hat\beta_2)^2,
$$
where $X$ is the design matrix, $\hat\beta, \hat\beta^N$ are the least squares estimates
under the alternative and null hypothesis, respectively.

\label{lem:avg}
\end{lemma}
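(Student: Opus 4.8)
The plan is to exploit the block structure of the mixture design matrix $X$, in which every row has exactly one nonzero entry. First I would write the vector $X\hat\beta - X\hat\beta^N$ out coordinatewise. Writing $c \defeq \XYm/\XXm$ for the common slope under the null, the $i$-th coordinate of $X\hat\beta$ equals $x_i\hat\beta_1$ for $i\in[n_1]$ and $x_i\hat\beta_2$ for $i\in\{n_1+1,\ldots,n\}$, while the $i$-th coordinate of $X\hat\beta^N$ equals $x_i c$ in both blocks. Subtracting, squaring, and summing over the two blocks separately gives
\[
\norm{X\hat\beta - X\hat\beta^N}^2 = \Big(\sum_{i=1}^{n_1}x_i^2\Big)(\hat\beta_1 - c)^2 + \Big(\sum_{i=n_1+1}^{n}x_i^2\Big)(\hat\beta_2 - c)^2 = \XXA(\hat\beta_1-c)^2 + \XXB(\hat\beta_2-c)^2,
\]
using $\sum_{i=1}^{n_1}x_i^2 = \XXA$ and $\sum_{i=n_1+1}^n x_i^2 = \XXB$.

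Next I would rewrite the null slope $c$ as a weighted average of the two group slopes. Since $\hat\beta_1 = \XYAm/\XXAm$ and $\hat\beta_2 = \XYBm/\XXBm$, we have $\XYA = \hat\beta_1\XXA$ and $\XYB = \hat\beta_2\XXB$; combined with the additive identities $\XY = \XYA + \XYB$ and $\XX = \XXA + \XXB$, this yields
\[
c = \frac{\XY}{\XX} = \frac{\XXA\hat\beta_1 + \XXB\hat\beta_2}{\XXA + \XXB}.
\]
In other words, $c$ is exactly the $\XXA,\XXB$-weighted mean of $\hat\beta_1$ and $\hat\beta_2$.

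Finally, a short algebraic simplification shows
\[
\hat\beta_1 - c = \frac{\XXB}{\XX}(\hat\beta_1 - \hat\beta_2),\qquad \hat\beta_2 - c = -\frac{\XXA}{\XX}(\hat\beta_1-\hat\beta_2),
\]
and substituting these into the two-block expression above factors out $(\hat\beta_1-\hat\beta_2)^2$:
\[
\XXA(\hat\beta_1-c)^2 + \XXB(\hat\beta_2-c)^2 = \frac{\XXA\XXB^2 + \XXB\XXA^2}{\XX^2}(\hat\beta_1-\hat\beta_2)^2 = \frac{\XXA\XXB}{\XX}(\hat\beta_1-\hat\beta_2)^2,
\]
where the last equality uses $\XXA\XXB^2 + \XXB\XXA^2 = \XXA\XXB(\XXA+\XXB) = \XXA\XXB\cdot\XX$. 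This is the claimed identity.

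I expect no serious obstacle here: the entire argument is driven by the block-diagonal structure of $X$ and the observation that $c$ is the group-$\overline{x^2}$-weighted average of the two slopes. The only point requiring care is the bookkeeping around the additive identities $\XY = \XYA + \XYB$ and $\XX = \XXA + \XXB$ — in particular, recognizing that $\XX = \XXA + \XXB$ is precisely what collapses the denominator from $\XX^2$ to $\XX$ in the last step.
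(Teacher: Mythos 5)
Your proof is correct and follows essentially the same route as the paper's: both decompose the squared norm over the two blocks of the design matrix, express the null slope as the $\XXA,\XXB$-weighted average of $\hat\beta_1$ and $\hat\beta_2$, and use the resulting identities for $\hat\beta_1 - c$ and $\hat\beta_2 - c$ to factor out $(\hat\beta_1-\hat\beta_2)^2$. The only difference is the order of the steps, which is immaterial.
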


\begin{proof}

First, from previous calculations, we obtained
$$
\hat\beta = \begin{pmatrix}
\XYAm/\XXAm\\
\XYBm/\XXBm\\
\end{pmatrix},\quad
\hat\beta^N = \begin{pmatrix}
\XYm/\XXm\\
\XYm/\XXm\\
\end{pmatrix}.
$$

Then $\hat\beta^N_1 = \hat\beta^N_2$ is a weighted average of $\hat\beta_1$ and
$\hat\beta_2$:
$$
\hat\beta^N_1 = \frac{\XXA}{\XX}\hat\beta_1 + \frac{\XXB}{\XX}\hat\beta_2.
$$

Using this calculation, we can obtain that
$$
\hat\beta_1 - \hat\beta_1^N = \frac{\XXB(\hat\beta_1 - \hat\beta_2)}{\XX},\quad
\hat\beta_2 - \hat\beta_1^N = \frac{\XXA(\hat\beta_2 - \hat\beta_1)}{\XX},
$$
so that
\begin{align*}
\norm{X\hat\beta - X\hat\beta^N}^2 
&= \sum_{i=1}^{n_1}(x_i\hat\beta_1 - x_i\hat\beta^N_1)^2 +  \sum_{i=n_1+1}^{n}(x_i\hat\beta_2 - x_i\hat\beta^N_1)^2\\
&= (\hat\beta_1 - \beta_1^N)^2\XXA + (\hat\beta_2 - \beta_1^N)^2\XXB\\
&= \frac{\XXA\XXB}{\XX}(\hat\beta_1 - \hat\beta_2)^2.
\end{align*}

This completes the proof.

\end{proof}

By Lemma~\ref{lem:avg}, our test statistic $T$ is
$$
T = \left(\frac{n-r}{r-q}\right)\frac{\norm{X\hat\beta - X\hat\beta^N}^2}{\norm{Y - X\hat\beta}^2} = \frac{\XXA\XXB}{S^2\XX}(\hat\beta_1 - \hat\beta_2)^2.
$$

\noindent\textbf{Level-$\alpha$ Test}: Under $H_0$, $T\sim F_{1, n-2}$
since by Theorem~\ref{thm:f}, the centrality parameter is $\eta^2 = 0$.
The level-$\alpha$ test will then reject the null if
$T$ is greater than the upper $\alpha$th quantile of this $F$-distribution,
$F_{\alpha, 1, n-2}$.

In other words, we will reject the null if
$$
\frac{\XXA\XXB}{S^2\XX}(\hat\beta_1 - \hat\beta_2)^2 > F_{\alpha, 1, n-2}.
$$
We see that the chance of rejecting the null increases as:
\begin{enumerate}
\item $|\hat\beta_1 - \hat\beta_2|$ increases.
\item $S^2$ decreases.
\item The ratio $\frac{\XXA\XXB}{\XX}$
increases,
which is more likely to occur when $\XXA$ 
is close to $\XXB$.
\end{enumerate}

\noindent\textbf{Power}:
The power is the chance that a random variable distributed as
$F_{1, n-2}(\eta^2)$ exceeds $F_{\alpha, 1, n-2}$ where the centrality parameter is
$\eta^2 = \frac{\XXA\XXB}{\sigma_e^2\XX}(\beta_1 - \beta_2)^2$.

In other words, the power of the test is
$$
\pr\left[F_{1, n-2}\left(\frac{\XXA\XXB}{\sigma_e^2\XX}(\beta_1 - \beta_2)^2\right) > F_{\alpha, 1, n-2}\right],
$$
where the probability is over the draws of the $F$-distribution.

\subsection{Generalization to Higher Dimensions}

Testing for a linear relationship and for mixtures using 
the $F$-statistic can be done in the multiple linear regression model
as well. The main changes that will need to be made are:
\begin{enumerate}
\item We can use any general design matrix $X\in\reals^{n\times p}$;
and
\item The parameter to be estimated lives in $\reals^p$
instead. i.e., $\beta\in\reals^p$, for any $p\geq 2$.
\end{enumerate}

\section{Differentially Private Monte Carlo Tests}

Since the private test statistic differs from the
non-private version, we have to create new statistics
to account for the level-$\alpha$ Monte Carlo differentially
private testing. The majority of our tests will be based on
DP sufficient statistics. In the statistics literature, a statistic
is considered \textbf{sufficient}, with respect to a particular
model, if it provides at least as much information for the value of an
unknown parameter as any other
statistic that can be calculated on a given sample~\citep{keener2010theoretical}.

Previous work~\citep{Sheffet17, Wang18, AMSSV20} perturb the
sufficient statistics for ordinary least squares 
and use the
result to compute a slope and intercept in a DP way.
To add noise to ensure privacy, we typically have to truncate certain random
variables. We use $Y|^A_B$ to mean that the random variable $Y$ will be truncated to have
an upper bound of $A$ and a lower bound of $B$.

For all our DP OLS Monte Carlo
tests that sample from a continuous Gaussian, we can
instead use discrete variants (e.g.,~\citep{Canonne0S20}).
The DP OLS Monte Carlo tests below use the zero-concentrated
differential privacy definition~\citep{BunS16}.

\subsection{Monte Carlo Hypothesis Testing}

We now proceed to discuss our general approach for designing a Level-$\alpha$ test for the task
of linear regression estimation based on sufficient statistic perturbation.
We rely on a sub-routine $\dpstats$ that can produce DP
statistics, given a dataset and
privacy parameters,
when testing. Algorithm~\ref{alg:mct} can then be specialized to test for the presence of a linear
relationship and for mixture models.

To design a Monte Carlo hypothesis test, we
follow a similar route to 
Gaboardi, Lim, Rogers, and Vadhan~\citep{RogersVLG16}.
In Algorithm~\ref{alg:mct}, we provide a framework to perform
DP Monte Carlo tests using a parametric bootstrap
based on a test statistic.
Let $\dpstats$ be a procedure that uses one or more
statistics of $X, Y$ to produce DP statistics that can be used to reject or
fail to reject the null hypothesis.
In this paper, $\dpstats$ will satisfy
$\rho$-zCDP (Zero-Concentrated Differential Privacy). 
\footnote{
Gaussian noise addition (for privacy) was chosen
because the noise in the dependent variable is also assumed to
be Gaussian. The use of the Gaussian (or truncated
Gaussian) distribution for both privacy and sampling error is a
convenient choice as it could result
in a clearer, more compatible, theoretical analysis.
}
$T$ is the test statistic computation procedure.
As done in~\citep{RogersVLG16}, for example, we will assume the dataset sizes are
public information.

Let $T = T(\hat\theta_1)$ be the non-private test statistic procedure given
$\hat\theta_1 = \hat\theta_1(X, Y)$.
The goal is to compute
$T(\tilde\theta_1)$ where $\tilde\theta_1$ is an approximation of 
$\hat\theta_1$.
$\dpstats$ returns $\tilde\theta_0$ and $\tilde\theta_1$. If 
$\tilde\theta_0$ and $\tilde\theta_1$ is not $\perp$ ($\perp$ is
returned whenever the perturbed statistics cannot be used to simulate the
null distributions), then we use $\tilde\theta_1$ to compute the DP
test statistic and $\tilde\theta_0$ to simulate the null.
$P_{\tilde\theta_0}$ represents the distribution from which we will sample from
to simulate the null distribution. When $(X, y)\sim P_{\tilde\theta_0}$ for
$\theta_0\in\Omega_0$ and we set $\tilde\theta_1 = \tilde\theta_1(X, Y)$ and sample
$(X', y')\sim P_{\tilde\theta_0}$, then 
$\hat\theta_1((X', y'))$ has approximately the same distribution as
$\hat\theta_1((X, y))$.

\begin{algorithm}
\KwData{$X\in\reals^{n\times p}; Y\in\reals^n$}
\KwIn{$n\text{ (dataset size)}; \rho\text{ (privacy-loss parameter)}; \alpha\text{ (target significance)}; T\text{ (test statistic)}$}

$(\tilde\theta_0, \tilde\theta_1) = \dpstats(X, Y, n, \rho)$

\If {$\tilde\theta_0 =  \tilde\theta_1 = \perp$} {
 \Return Fail to Reject the null
}

\

// non-DP test statistic applied to DP statistics

$\tilde{T} = \tilde{t} = T(\tilde\theta_1)$ 

Select $K > 1/\alpha$

\For {$k=1\ldots K$} {

   $\forall i\in[n],$ ${}^kX_i, {}^ky_i \sim P_{\tilde\theta_0}$
   
   ${}^k{}\tilde\theta_0, {}^k\tilde\theta_1 = \dpstats({}^kX_i, {}^ky_i, n, \rho)$
   
   \

   Obtain $t_k$ from $T({}^k\tilde\theta_1)$
}

\

Sort $t_{(1)} \leq\cdots\leq t_{(K)}$

\

Set $r = \lceil (K+1)(1-\alpha)\rceil$

\If { $\tilde{t} > t_{(r)}$ } {
 \Return Reject the null
} \Else {
 \Return Fail to Reject the null
}

\caption{Monte Carlo DP Test Framework.}
\label{alg:mct}
\end{algorithm}

\subsection{Testing a Linear Relationship}
\label{sec:dplinear}

We now discuss our $F$-statistic and Bernoulli testing
approaches.

\subsubsection{$F$-Statistic}

For testing a linear relationship in simple linear regression models, recall that in the
non-private case, we had
$$
T(X, Y, \hat\beta, \hat\beta^N, n, r, q) = \left(\frac{n-r}{r-q}\right)\frac{\norm{X\hat\beta - X\hat\beta^N}^2}{\norm{Y - X\hat\beta}^2}.
$$

Accordingly, we define and compute
$\tilde{T}_L(X, Y, \hat\beta, \hat\beta^N, n, r, q, \rho, \Delta) = \tilde{t}$,
a private estimate of $T(X, Y, \hat\beta, \hat\beta^N, n, r, q)$.
In Algorithm~\ref{alg:t1}, we give the full
$\rho$-zCDP procedure for computing all
necessary sufficient statistics to compute
$\tilde{T}_L(X, Y, \hat\beta, \hat\beta^N, n, r, q, \rho, \Delta)$.

The DP estimate of $S^2$, $\widetilde{S^2}$,
can be computed as
$\widetilde{S^2} = \frac{\sum_{i=1}^n(y_i - \tilde\beta_2 - \tilde\beta_1x_i)^2]^{\Delta^2}_{0} + \calN(0, \frac{\Delta^4}{2\rho})}{n-r}$. Another equivalent
way to compute $\widetilde{S^2}$ is to 
compute $\widetilde{\YYm}$ privately and then, together
with the other DP estimates, to compute
$\widetilde{S^2}$.
Note that under the null hypothesis, 
the DP estimate of $S^2$ is
$\widetilde{S_0^2} = \frac{\sum_{i=1}^n(y_i - \tilde\beta_2)^2]^{\Delta^2}_{0} + \calN(0, \frac{\Delta^4}{2\rho})}{n-r}$ which can also be
equivalently computed by using
$\tilde{\bar{y}}, \widetilde{\YYm}, \tilde{\beta}_2$.
Also, we return
$(\tilde\theta_0, \tilde\theta_1) =  (\perp, \perp)$ if
the computed DP sufficient statistics cannot be used to
simulate the null distribution.

\begin{algorithm}
\KwData{$X\in\reals^{n\times 2}; Y\in\reals^n$}
\KwIn{integer $n\geq 2$; $r, q\in\naturals$; $\rho > 0, \Delta > 0$}

\

Set $\rho = \rho/5$
and compute the following:

\

\begin{enumerate}

\item $\tilde{\bar{x}} = \frac{1}{n}\sum_{i=1}^nx_i]^\Delta_{-\Delta} + \calN(0, \frac{2\Delta^2}{\rho n^2})$.

\item $\tilde{\bar{y}} = \frac{1}{n}\sum_{i=1}^ny_i]_{-\Delta}^\Delta + \calN(0, \frac{2\Delta^2}{\rho n^2})$.

\item $\widetilde\XXm = \frac{1}{n}\sum_{i=1}^nx_i^2]_{0}^{\Delta^2} + \calN(0, \frac{\Delta^4}{2\rho n^2})$.

\item $\widetilde\XYm = \frac{1}{n}\sum_{i=1}^nx_iy_i]_{-\Delta^2}^{\Delta^2} + \calN(0, \frac{2\Delta^4}{\rho n^2})$.

\item
$\widetilde{\YYm} = \frac{1}{n}\sum_{i=1}^ny_i^2]_{0}^{\Delta^2} + \calN(0, \frac{\Delta^4}{2\rho n^2})$.

\item
$$
\tilde\beta_1 = \frac{\widetilde{\XYm}-\tilde{\bar{x}}\tilde{\bar{y}}}{\widetilde\XXm - \tilde{\bar{x}}^2}, \quad\tilde\beta_2 = \frac{\tilde{\bar{y}}\cdot\widetilde{\XXm} - \tilde{\bar{x}}\cdot\widetilde{\XYm}}{\widetilde{\XXm} - \tilde{\bar{x}}^2}.
$$

\item

$$
\widetilde{S_0^2} = \frac{n\widetilde{\YYm}-2\tilde\beta_2n\tilde{\bar{y}} + n\tilde\beta^2_2}{n-r}.
$$

$$
\widetilde{S^2} = \frac{n\widetilde{\YYm}-2\tilde\beta_2n\tilde{\bar{y}}-2\tilde\beta_1n\widetilde{\XYm}+n\tilde\beta^2_2+2\tilde\beta_1\tilde\beta_2n\tilde{\bar{x}} +\tilde\beta_1^2n\widetilde{\XYm}}{n-r}.
$$

\end{enumerate}

\

$(\tilde\theta_0, \tilde\theta_1) =  (\perp, \perp)$
 
\If {$\min(\widetilde{S_0^2}, (n\widetilde\XXm - n\tilde{\bar{x}}^2)/(n-1) ) > 0$} {
$\tilde\theta_0 = (\tilde{\beta}_2, \tilde{\bar{x}}, \widetilde\XXm, \widetilde{S_0^2}, n)$

$\tilde\theta_1 = (\tilde{\beta}_1, \tilde{\bar{x}}, \widetilde{\XXm}, \widetilde{S^2}, n)$
}

\

\Return ($\tilde\theta_0, \tilde\theta_1$)

\caption{$\rho$-zCDP procedure $\dpstats_L$}
\label{alg:t1}
\end{algorithm}

\begin{lemma}
For any $\rho, \Delta > 0$, Algorithm~\ref{alg:t1} 
satisfies $\rho$-zCDP.
\end{lemma}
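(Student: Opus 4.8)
The plan is to decompose Algorithm~\ref{alg:t1} into the five elementary noisy releases of steps (1)--(5) and to argue that everything downstream is mere post-processing. The three ingredients are: (i) the Gaussian mechanism, which guarantees that perturbing a function of $\ell_2$-sensitivity $\Delta_2$ by $\calN(0, \Delta_2^2/(2\rho))$ achieves $\rho$-zCDP; (ii) sequential (adaptive) composition of zCDP, whereby running mechanisms that are individually $\rho_1,\dots,\rho_k$-zCDP yields a $(\sum_i\rho_i)$-zCDP mechanism; and (iii) the post-processing invariance of zCDP. Since the algorithm resets $\rho\leftarrow\rho/5$ at the outset, it suffices to show that each of the five releases is $(\rho/5)$-zCDP with respect to the reset parameter and then invoke composition.

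First I would compute the $\ell_2$-sensitivity of each clipped statistic under the neighboring relation, recalling that a neighboring change replaces a single row $(x_i, y_i)$ and that each summand is clipped before averaging. For $\tilde{\bar{x}}$ the clipped summand lies in $[-\Delta, \Delta]$, so a single-row change moves the mean by at most $2\Delta/n$; the calibrated variance is therefore $(2\Delta/n)^2/(2\rho) = 2\Delta^2/(\rho n^2)$, matching step (1), and the identical bound $2\Delta/n$ governs $\tilde{\bar{y}}$. For $\widetilde{\XXm}$ the summand $x_i^2$ is clipped to $[0, \Delta^2]$, giving sensitivity $\Delta^2/n$ and variance $\Delta^4/(2\rho n^2)$, and the same holds for $\widetilde{\YYm}$. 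For $\widetilde{\XYm}$ the summand $x_i y_i$ is clipped to $[-\Delta^2, \Delta^2]$, giving sensitivity $2\Delta^2/n$ and variance $2\Delta^4/(\rho n^2)$. In each case the variance literally written in the algorithm equals $\Delta_2^2/(2\rho)$, so each release is $(\rho/5)$-zCDP by the Gaussian mechanism.

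By sequential composition the joint release of the five noisy statistics is $5\cdot(\rho/5) = \rho$-zCDP. Finally, every remaining quantity---$\tilde\beta_1, \tilde\beta_2, \widetilde{S_0^2}, \widetilde{S^2}$, the threshold test $\min(\widetilde{S_0^2}, (n\widetilde{\XXm} - n\tilde{\bar{x}}^2)/(n-1)) > 0$, and the returned tuples $(\tilde\theta_0, \tilde\theta_1)$ (including the $(\perp, \perp)$ branch)---is a deterministic function of these five noisy statistics together with the public inputs $n, r, q, \rho, \Delta$, hence a post-processing of the $\rho$-zCDP output, which cannot increase the privacy loss. This yields $\rho$-zCDP for the whole algorithm.

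The main obstacle is confined to the sensitivity bookkeeping: one must verify that each clipping interval caps the per-row contribution at exactly the claimed amount, and then match each resulting $\Delta_2^2/(2\rho)$ to the variance stated in the algorithm. A subtlety worth flagging is that a single neighboring change simultaneously perturbs all five summands; this is handled correctly by treating the five releases as a composed sequence rather than attempting one combined joint-sensitivity argument, so that composition---not a single $\ell_2$ bound across all statistics---absorbs the correlated change.
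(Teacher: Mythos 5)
Your proof is correct and follows essentially the same route as the paper's: each of the five clipped, noise-perturbed statistics is released via the Gaussian mechanism at $\rho/5$-zCDP, the five releases compose to $\rho$-zCDP, and $\tilde\beta_1, \tilde\beta_2, \widetilde{S^2}, \widetilde{S_0^2}$ and the return logic are post-processing. The only difference is that you make explicit the per-release sensitivity and variance calibration that the paper leaves implicit, and your bookkeeping there is accurate.
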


\begin{proof}

This follows from Proposition 1.6 in~\citep{BunS16} (use of the
Gaussian Mechanism). Next,
we apply composition and 
post-processing (Lemmas 1.7 and 1.8 in~\citep{BunS16}).
The computation of the following statistics is each done to
satisfy $\rho/5$-zCDP:
$\tilde{\bar{x}},\tilde{\bar{y}}, \widetilde{\XXm}, \widetilde{\XYm},  \widetilde{\YYm}$. $\tilde{\beta}_1, \tilde{\beta}_2, \widetilde{S^2}, \widetilde{S_0^2}$ are
post-processing of the other DP releases.

As a result, the entire procedure satisfies $\rho$-zCDP.

\end{proof}

\paragraph{Instantiating Algorithm~\ref{alg:mct} for the Linear Tester:}
If the procedure $\dpstats_L$ returns $(\perp, \perp)$, then we fail to reject the
null. Otherwise, we use the returned statistics
$\tilde\theta_1 = (\tilde{\beta}_1, \tilde{\bar{x}}, \widetilde{\XXm}, \widetilde{S^2}, n)$ to create the
test statistic
$T(\tilde\theta_1) = \frac{\tilde\beta_1^2\cdot n\cdot (\widetilde{\XXm} - \tilde{\bar{x}}^2)}{\widetilde{S^2}}$
and use
$\tilde\theta_0 = (\tilde{\bar{y}}, \tilde{\bar{x}}, \widetilde{\XXm}, \widetilde{S_0^2}, n)$ to simulate the
null distributions (to decide to reject or fail to reject the null
hypothesis).
$P_{\tilde\theta_0}$ is instantiated as a normal distribution and used to
generate ${}^kx_i$ distributed as
$\calN(\tilde{\bar{x}}, (n\widetilde\XXm - n\tilde{\bar{x}}^2)/(n-1))$
and ${}^ky_i$ as
$\tilde{\beta}_2 + e_i$,
$e_i \sim\calN(0, \widetilde{S_0^2})$ for all $i\in[n]$.

\subsubsection{Bernoulli Tester}

Next, we define an approach for testing a linear relationship
via Bernoulli testing,
inspired by the DP regression estimators
of~\citep{DworkL09, AMSSV20}.

Under the null hypothesis (slope is $\beta_1=0$), observe that, under
the general linear model, if we pair datapoints
$(x_i, y_i), (x_{i+1}, y_{i+1})$ and
calculate the sign at the slope of the line between
the datapoints, we have
$$
\ind\left\{\frac{y_{i+1}-y_i}{x_{i+1}-x_i} > 0\right\}
= \ind\left\{\beta_1 + \frac{e_{i+1}-e_i}{x_{i+1}-x_i} > 0\right\} \sim
\Bern(1/2),
$$
provided that $x_{i+1}\neq x_i$ (if $x_{i+1}=x_i$, then 
we set the result to a random
$\Bern(1/2)$).
Note that this holds for any continuous distribution 
for the $e_i$'s, not necessarily normal.
There are simple DP tests to determine whether
$p=1/2$ given a dataset drawn from $\Bern(p)$ by computing
a noisy sum of the values in the dataset and comparing it
to a noisy threshold (determined by a normal approximation
to a binomial distribution).
The DP regression estimators
of~\citep{DworkL09, AMSSV20} also calculate the slopes
between pairs of points, but then outputs a 
DP~\textit{median} of the results.



The resulting algorithm for privately testing a linear relationship is
shown in Algorithm~\ref{alg:pbinomial}.
We first group the points into $\lfloor n/2\rfloor$
pairs. Then we calculate
$s$, the number of slopes that are positive. We add noise to this
estimate to satisfy $\rho$-zCDP and then use the noisy estimate
to decide to reject the null.
The noise to satisfy zCDP is $\calN(0, \frac{1}{2\rho})$
whereas a normal approximation to
$\Bin(n_s, 1/2)$ is $\calN(n_s, n_s/4)$. 
As a result,
we can reject the null hypothesis iff the DP observed number of
1s is not in the $(\alpha/2, 1-\alpha/2)$ quantiles of
$\calN(\frac{n_s}{2}, \frac{n_s}{4} + \frac{1}{2\rho})$ where
$\alpha$ is the target significance level.

\begin{algorithm}
\KwData{$X\in\reals^{n\times 2}; Y\in\reals^n$}
\KwIn{$n\in\mathbb{N}$; $\rho > 0$}

Let $x_1, \ldots, x_n$ be the observed 1-D independent variables from $X$

Let $\tau: [n]\rightarrow[n]$ be a randomly chosen permutation

$s = 0$

$n_s = \lfloor n/2\rfloor$

\For{$i=1 \ldots n_s$} {

\If {$x_{\tau(n_s + i)} - x_{\tau(i)} \neq 0$} {
   $s = s + \ind\left\{\frac{Y_{\tau(n_s + i)} - Y_{\tau(i)}}{x_{\tau(n_s + i)} - x_{\tau(i)}} > 0\right\}$
} \Else {
   $r \sim\Bern(1/2)$
   
   $s = s + r$
}

}

$s = s + \calN(0, \frac{1}{2\rho})$

Let $N_{\alpha/2}$ and $N_{1-\alpha/2}$ denote the
$\alpha/2$ and $1-\alpha/2$ quantiles of $\calN(n_s/2, \frac{n_s}{4} + \frac{1}{2\rho})$ respectively

\If { $s \notin (N_{\alpha/2}, N_{1-\alpha/2})$ } {
 \Return Reject the null
} \Else {
 \Return Fail to Reject the null
}

\caption{$\rho$-zCDP procedure $\dpbern$}
\label{alg:pbinomial}
\end{algorithm}

\begin{lemma}
For any $\rho> 0$ and $n\in\mathbb{N}$, 
Algorithm~\ref{alg:pbinomial} satisfies $\rho$-zCDP.
\end{lemma}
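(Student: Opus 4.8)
The plan is to reduce the guarantee to a single application of the Gaussian mechanism, with the whole weight resting on a sensitivity-one bound for the pre-noise count. First I would isolate the only data-dependent quantity that is perturbed in $\dpbern$: the integer $s$ produced by the loop, which counts, over the $n_s=\lfloor n/2\rfloor$ pairs $\{\tau(i),\tau(n_s+i)\}$, how many have a positive empirical slope (using a $\Bern(1/2)$ coin to break ties when $x_{\tau(n_s+i)}=x_{\tau(i)}$). Everything after the noise addition---forming the fixed normal $\calN(n_s/2,\,n_s/4+1/(2\rho))$, reading off its quantiles $N_{\alpha/2},N_{1-\alpha/2}$, and testing whether $s\notin(N_{\alpha/2},N_{1-\alpha/2})$---is a deterministic function of the noisy $s$ together with the public quantities $n$ and $\rho$. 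Hence it is post-processing and preserves zCDP by Lemma 1.8 in \citep{BunS16}.

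The crux, and the step I expect to be the main obstacle, is the sensitivity argument, so I would do it carefully. Fix all external randomness---the permutation $\tau$ and the tie-breaking coins---each sampled independently of the data; conditioned on this, $s=f(\bx,\by)$ is a deterministic function of the dataset. Because $\tau$ is a permutation, the index pairs $\{\tau(i),\tau(n_s+i)\}$ for $i=1,\dots,n_s$ are pairwise disjoint (and when $n$ is odd one index is omitted entirely), so every datapoint $(x_j,y_j)$ participates in at most one pair. Changing a single row of the dataset therefore alters the summand of at most one pair, and since each summand---whether the indicator $\ind\{\cdots>0\}$ or the replacement coin---lies in $\{0,1\}$, the value of $s$ can change by at most $1$. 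This disjoint-pairing fact is exactly what the random pairing buys us and is why the tie-breaking branch does not inflate sensitivity: it still contributes a value in $\{0,1\}$ to a single pair. Thus $f$ has $\ell_2$-sensitivity $\Delta_2=1$.

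Given $\Delta_2=1$, adding $\calN(0,1/(2\rho))$ to $s$ is precisely the Gaussian mechanism of Proposition 1.6 in \citep{BunS16} with variance $\sigma^2=1/(2\rho)$, which yields $\Delta_2^2/(2\sigma^2)=\rho$-zCDP---conditioned on the external randomness. The last step is to remove that conditioning. Since the external randomness is data-independent, I would consider the augmented mechanism that additionally outputs $\tau$ and the coins; for neighboring $\bx,\bx'$ its order-$\alpha$ R\'enyi divergence is the average, over the (common) distribution of the external randomness, of the conditional divergences, each bounded by $\rho\alpha$, and so is itself at most $\rho\alpha$. The actual $\dpbern$ is a post-processing of this augmented mechanism (discard the external randomness), so it too satisfies $\rho$-zCDP, completing the proof.
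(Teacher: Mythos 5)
Your proof is correct and takes essentially the same route as the paper's: a sensitivity-one bound on the pre-noise count $s$ coming from the disjointness of the random pairs, the Gaussian mechanism of Proposition 1.6 of Bun--Steinke with $\sigma^2 = 1/(2\rho)$, and post-processing for the threshold comparison, with the data-independent pairing and tie-breaking randomness handled by conditioning (the paper instead cites the 1-Lipschitz-transformation lemma of AMSSV20 for this step). One small imprecision that does not affect the conclusion: for the augmented mechanism, $e^{(\alpha-1)D_\alpha}$ is the average over the external randomness of $e^{(\alpha-1)D_\alpha(\cdot\,|\,r)}$, not the divergence itself being the average of the conditional divergences, but the uniform bound $\rho\alpha$ on each conditional divergence still yields the claimed bound.
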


\begin{proof}

Algorithm~\ref{alg:pbinomial}
pairs the points into $n_s=\lfloor n/2\rfloor$ 
pairs. Note that this is a
1-Lipschitz transformation
(i.e., changing one datapoint will change a single slope 
estimate) so that the differential privacy
guarantees are preserved (by Definition 16 and Lemma 17
of~\citep{AMSSV20}).

A single datapoint $(x_i, y_i)$ can affect the
sum $s$ by at most 1 so the
global sensitivity is 1.
By post-processing and by
Proposition 1.6 in~\citep{BunS16}, 
via the use of the Gaussian
Mechanism, Algorithm~\ref{alg:pbinomial} satisfies $\rho$-zCDP.
\end{proof}

We note that a uniformly most powerful DP Bernoulli tester
has been designed in~\citep{Awan_Slavkovic_2020}. Using this
tester might yield better power than the tester
described in Algorithm~\ref{alg:pbinomial}. However, that work
is for pure DP (whereas we use zCDP) and the test is
computationally slower.

\subsection{Testing Mixture Models}
\label{sec:dpmixture}

As we will show experimentally, the best
framework for the mixture model test depends on
the properties of the dataset. This can be
seen as conditional inference~\citep{NBERw25456}.
We now discuss our $F$-statistic and Kruskal-Wallis approaches.

\subsubsection{$F$-Statistic}

In the non-private case, we can use the following test statistic
for testing mixtures in simple linear regression models:
\begin{align*}
T(X, Y, \hat\beta, \hat\beta^N, n, r, q) &= \left(\frac{n-r}{r-q}\right)\frac{\norm{X\hat\beta - X\hat\beta^N}^2}{\norm{Y - X\hat\beta}^2} \\
&= \frac{\XXA\XXB}{S^2\XX}(\hat\beta_1 - \hat\beta_2)^2.
\end{align*}

\begin{algorithm}
\KwData{$X\in\reals^{n\times 2}, Y\in\reals^n$}
\KwIn{integer $n_1, n\geq 2$; $r, q\in\naturals$; $\rho > 0, \Delta > 0$}

Set $\rho = \rho/8$ and $n_2 = n-n_1$. Then 
compute the following:

\begin{enumerate}

\item $\tilde{\bar{x}}_1 = \frac{1}{n_1}\sum_{i=1}^{n_1}x_i]^\Delta_{-\Delta} + \calN(0, \frac{2\Delta^2}{\rho n_1^2})$,
$\tilde{\bar{x}}_2 = \frac{1}{n_2}\sum_{i=n_1+1}^nx_i]^\Delta_{-\Delta} + \calN(0, \frac{2\Delta^2}{\rho n_2^2})$,
$\tilde{\bar{x}} = n_1/n\cdot\tilde{\bar{x}}_1+  n_2/n\cdot\tilde{\bar{x}}_2$.

\item $\widetilde\XXAm = \frac{1}{n_1}\sum_{i=1}^{n_1}x_i^2]^{\Delta^2}_0 + \calN(0, \frac{\Delta^4}{2\rho n_1^2})$, 
$\widetilde\XXBm = \frac{1}{n_2}\sum_{i=n_1+1}^{n}x_i^2]^{\Delta^2}_0 + \calN(0, \frac{\Delta^4}{2\rho n_2^2})$,
$\widetilde\XXm = n_1/n\cdot\widetilde\XXAm + n_2/n\cdot\widetilde\XXBm$.

\item $\widetilde\XYAm = \frac{1}{n_1}\sum_{i=1}^{n_1}x_iy_i]_{-\Delta^2}^{\Delta^2} + \calN(0, \frac{2\Delta^4}{\rho n_1^2})$,
$\widetilde\XYBm = \frac{1}{n_2}\sum_{i=n_1+1}^nx_iy_i]_{-\Delta^2}^{\Delta^2} + \calN(0, \frac{2\Delta^4}{\rho n_2^2})$,
$\widetilde\XYm = n_1/n\cdot\widetilde\XYAm + n_2/n\cdot\widetilde\XYBm$.

\item $\widetilde\YYAm = \frac{1}{n_1}\sum_{i=1}^{n_1}y_i^2]_{0}^{\Delta^2} + \calN(0, \frac{\Delta^4}{2\rho n_1^2})$,
$\widetilde\YYBm = \frac{1}{n_2}\sum_{i=n_1+1}^ny_i^2]_{0}^{\Delta^2} + \calN(0, \frac{\Delta^4}{2\rho n_2^2})$,
$\widetilde{\YYm} = n_1/n\cdot\widetilde\YYAm + n_2/n\cdot\widetilde\YYBm$.

\item $$\tilde\beta_1 = \frac{\widetilde\XYAm}{\widetilde\XXAm}, \quad\tilde\beta_2 = \frac{\widetilde\XYBm}{\widetilde\XXBm},
\quad\tilde\beta = n_1/n\cdot\tilde\beta_1 + n_2/n\cdot\tilde\beta_2.$$

\item 
$$
\widetilde{S_0^2} = \frac{n\widetilde{\YYm} + n\tilde\beta^2 - 2n\widetilde{\XYm}\tilde\beta}{n-r}.
$$

$$
\widetilde{S^2} =\frac{n_1\widetilde{\YYAm} + n_1\tilde\beta_1^2 - 2n_1\widetilde{\XYAm}\tilde\beta_1 + n_2\widetilde{\YYBm} + n_2\tilde\beta_2^2 - 2n_2\widetilde{\XYBm}\tilde\beta_2}{n-r}.
$$

\end{enumerate}

\

$(\tilde\theta_0, \tilde\theta_1) =  (\perp, \perp)$

\If {$\min(\widetilde{S_0^2}, (n\widetilde\XXm - n\tilde{\bar{x}}^2)/(n-1) ) > 0$} {
$\tilde\theta_0 = (\tilde\beta_1, \tilde{\bar{x}}, \widetilde\XXm, \widetilde{S_0^2}, n_1, n_2, n)$ 

$\tilde\theta_1 = (\tilde\beta_1, \tilde\beta_2, \widetilde\XXAm, \widetilde\XXBm, \widetilde\XXm, \widetilde{S^2}, n_1, n_2, n)$
}

\

\Return ($\tilde\theta_0, \tilde\theta_1$)

\caption{$\rho$-zCDP procedure $\dpstats_M$}
\label{alg:t2}
\end{algorithm}

In Algorithm~\ref{alg:t2}, we apply the Gaussian mechanism
to calculate the DP sufficient statistics.
$\widetilde{S_0^2}$ and
$\widetilde{S^2}$ are DP estimates of the sampling error
under the null and alternative hypothesis, respectively.
In particular, $\widetilde{S_0^2}$ corresponds to an estimate
of the sampling error when the groups have the same
distributional properties.

\begin{lemma}
For any $\rho, \Delta > 0$, Algorithm~\ref{alg:t2} satisfies $\rho$-zCDP.
\end{lemma}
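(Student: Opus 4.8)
The plan is to mirror the privacy argument for Algorithm~\ref{alg:t1}, exploiting the fact that $\dpstats_M$ is built entirely out of independent Gaussian-mechanism releases followed by post-processing. First I would isolate the privatized quantities that directly touch the raw data: the eight group-level statistics
$\tilde{\bar{x}}_1, \tilde{\bar{x}}_2, \widetilde\XXAm, \widetilde\XXBm, \widetilde\XYAm, \widetilde\XYBm, \widetilde\YYAm, \widetilde\YYBm$,
each of which is a per-coordinate clipped empirical mean over one group perturbed by fresh Gaussian noise. Since the budget has been reset to $\rho/8$, the goal is to show each of these eight releases is $(\rho/8)$-zCDP by the Gaussian mechanism (Proposition~1.6 in~\citep{BunS16}).

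The key calculation is to check that each clipping range produces an $\ell_2$-sensitivity matching the prescribed noise variance. For a group of size $m\in\{n_1,n_2\}$, clamping each $x_i$ to $[-\Delta,\Delta]$ makes the clipped empirical mean change by at most $2\Delta/m$ when one row of that group is replaced, so with noise $\calN(0,2\Delta^2/(\rho m^2))$ (evaluated at the reset $\rho$) the Gaussian mechanism yields exactly $(\rho/8)$-zCDP; the analogous sensitivities are $\Delta^2/m$ for the $[0,\Delta^2]$-clipped squares $\widetilde\XXAm,\widetilde\YYAm$ and $2\Delta^2/m$ for the $[-\Delta^2,\Delta^2]$-clipped products $\widetilde\XYAm$, again matched to the stated variances. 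I would carry this out once per clipping type and invoke symmetry across the two groups.

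Next I would compose. Here I would use the fact that the data is partitioned into two groups by (public) index, so any neighboring database differs in a single row lying in exactly one group, leaving the other group's four statistics distributionally unchanged. Hence the R\'enyi divergence of the joint release decomposes over the four affected components, and zCDP composition (Lemma~1.7 in~\citep{BunS16}) bounds the total by $4\cdot(\rho/8)=\rho/2\le\rho$ (even the crude bound that treats all eight releases as composed gives $\rho$). Finally, every remaining output---the pooled quantities $\tilde{\bar{x}},\widetilde\XXm,\widetilde\XYm,\widetilde{\YYm}$, the slope estimates $\tilde\beta_1,\tilde\beta_2,\tilde\beta$, the variances $\widetilde{S_0^2},\widetilde{S^2}$, and the final $\perp$/acceptance test---is a deterministic function of the eight noisy releases, so post-processing (Lemma~1.8 in~\citep{BunS16}) preserves $\rho$-zCDP.

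The main obstacle is bookkeeping rather than any deep idea: I must verify each of the three sensitivity-to-variance matchings carefully (the factor-of-two discrepancies among means, squares, and cross-products are easy to misplace, and the clip must be read as applied per coordinate for the sensitivities to come out as claimed) and ensure the composition is taken over exactly the data-dependent releases. The one conceptual point worth stating explicitly is the group-partition observation, which is what lets a single changed row affect only four of the eight Gaussian releases; everything after that is routine.
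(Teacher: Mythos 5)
Your proposal is correct and follows essentially the same route as the paper: each of the eight group-level releases is a Gaussian-mechanism output at $\rho/8$-zCDP, composition over them gives $\rho$-zCDP, and everything else (the pooled statistics, $\tilde\beta_1,\tilde\beta_2$, $\widetilde{S^2}$, $\widetilde{S_0^2}$, and the $\perp$ check) is post-processing. Your additional observation that a neighboring database touches only one group, so parallel composition over the four affected releases already gives $\rho/2$-zCDP, is a valid sharpening that the paper does not use---it simply composes all eight releases to land exactly at $\rho$.
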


\begin{proof}
This follows from Proposition 1.6 in~\citep{BunS16} via the
use of the
Gaussian Mechanism to ensure zCDP.

The composition and 
post-processing properties of zCDP
(Lemmas 1.7 and 1.8 in~\citep{BunS16}) can then be applied.
The computation of the following statistics is each done to
satisfy $\rho/8$-zCDP:
$\tilde{\bar{x}}_1, \tilde{\bar{x}}_2, \widetilde\XXAm, \widetilde\XXBm, \widetilde\XYAm, \widetilde\XYBm, \widetilde{\YYAm}, \widetilde{\YYBm}$.
The other statistics computed
are post-processed DP releases.

As a result, the entire procedure satisfies $\rho$-zCDP.

\end{proof}

\paragraph{Instantiating Algorithm~\ref{alg:mct} for the $F$-statistic Mixture Tester:}
If the procedure $\dpstats_M$ returns $(\perp, \perp)$, then we fail to reject the
null. Otherwise, we use the returned statistics\\
$\tilde\theta_1 = (\tilde\beta_1, \tilde\beta_2, \widetilde\XXAm, \widetilde\XXBm, \widetilde\XXm, \widetilde{S^2}, n_1, n_2, n)$ to create the
test statistic and use\\
$\tilde\theta_0 = (\tilde\beta_1, \tilde{\bar{x}}, \widetilde\XXm, \widetilde{S_0^2}, n_1, n_2, n)$
to simulate the
null distributions.
$P_{\tilde\theta_0}$ is instantiated as a normal distribution
and used to generate ${}^kx_i$ distributed 
as $\calN(\tilde{\bar{x}}, 
   (n\widetilde\XXm - n\tilde{\bar{x}}_1^2)/(n-1))$
and to generate ${}^ky_i$ distributed as
$\tilde\beta_1{}^kx_i + e_i$,
$e_i \sim\calN(0, \widetilde{S_0^2})$ for either group 1 with size $n_1$ or
group 2 with size $n-n_1$.

\subsubsection{Nonparametric Tests via Kruskal-Wallis}

Couch, Kazan, Shi, Bray, and Groce~\citep{CouchKSBG19} present
DP analogues of
nonparametric hypothesis testing 
methods (which require little or no distributional
assumptions). They find that the DP variant
of the Kruskal-Wallis test statistic
is more powerful than the DP version of the
traditional parametric statistics for testing if two groups
have the same medians. 
Here, we reduce our problem of testing mixture models to their problem
of testing if groups share the
same median. The reduction is as follows:
Given two datasets $(\bx^1, \by^1)$ and $(\bx^2, \by^2)$, each of size
$n_1$ and $n_2$ respectively, we wish to test if
the slopes are equal. We randomly match all pairs of points in $(\bx^1, \by^1)$
and obtain at most $n_1/2$ slopes in $s_1$. We do the same for the 
second group $(\bx^2, \by^2)$
to obtain $n_2/2$ slopes in $s_2$. Then we compute the mean of ranks of elements in $s_1$ and $s_2$ as
$\bar{r}_1$ and $\bar{r}_2$ respectively. Next, we compute the
Kruskal-Wallis absolute value test statistic $h$ from
~\citep{CouchKSBG19} and release a perturbed
version satisfying zCDP.
We can use the
Monte Carlo testing framework in Algorithm~\ref{alg:mct} and
use Algorithm~\ref{alg:t3} to compute the test statistics.
Under the null, the slopes in $s_1$ and $s_2$ would have 
similar ranks so we choose uniform random numbers in some interval.
We then decide to reject or fail to reject the null, based on the distribution
of test statistics obtained via this process and its relation to the statistic
computed on the observed data.

\begin{lemma}
For any $\rho> 0$ and even $n$, 
Algorithm~\ref{alg:t3} satisfies $\rho$-zCDP.
\end{lemma}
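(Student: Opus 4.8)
The plan is to reuse the template established by the $\rho$-zCDP proofs of Algorithms~\ref{alg:t1},~\ref{alg:t2}, and~\ref{alg:pbinomial}: isolate the single noisy release, bound its global sensitivity, calibrate Gaussian noise to that sensitivity, and close with post-processing. Concretely, Algorithm~\ref{alg:t3} touches the raw data only through the single quantity $h$, the Kruskal--Wallis absolute-value statistic computed from the within-group slope multisets $s_1, s_2$; every subsequent step---comparison against the Monte Carlo null distribution generated in Algorithm~\ref{alg:mct} and the final reject/fail-to-reject decision---is post-processing. So it suffices to show that the perturbed release of $h$ is $\rho$-zCDP, and this reduces to two sensitivity claims.

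First I would argue that the random within-group pairing is a $1$-Lipschitz transformation from the dataset $(\bx, \by)$ to the slope multiset $s_1 \cup s_2$. Changing a single datapoint $(x_i, y_i)$ alters only the one slope belonging to the pair containing index $i$, leaving the other $\lfloor n_1/2 \rfloor + \lfloor n_2/2 \rfloor - 1$ slopes untouched; this is exactly the $1$-Lipschitz property invoked for Algorithm~\ref{alg:pbinomial} via Definition~16 and Lemma~17 of~\citep{AMSSV20}. Hence a single neighboring change in $(\bx, \by)$ induces a change of at most one entry in the slope multiset fed into the statistic.

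Second I would bound the global sensitivity of $h$ under a single-slope change. Here I would invoke the sensitivity analysis of the Kruskal--Wallis statistic from~\citep{CouchKSBG19}: since $h$ depends on the data only through the mean ranks $\bar{r}_1, \bar{r}_2$, and altering one slope value perturbs the rank ordering in a controlled way, $h$ has bounded global sensitivity $\Delta_h$ with respect to changing one element of $s_1 \cup s_2$. Composing the $1$-Lipschitz pairing with this bound yields the dataset-level sensitivity, so adding Gaussian noise of variance $\Delta_h^2/(2\rho)$ makes the release of the perturbed $h$ satisfy $\rho$-zCDP by Proposition~1.6 of~\citep{BunS16}. Post-processing (Lemma~1.8 of~\citep{BunS16}) then extends this guarantee to the entire output of Algorithm~\ref{alg:t3}.

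I expect the main obstacle to be the second step: pinning down the exact global sensitivity of the rank-based statistic. Unlike the bounded sums perturbed in Algorithms~\ref{alg:t1}--\ref{alg:pbinomial}, a single changed slope can shift the ranks of many other slopes at once, so a naive term-by-term bound is loose; the delicate point is that these rank shifts largely cancel inside $h$, and one must appeal to (or re-derive) the tight sensitivity bound of~\citep{CouchKSBG19} rather than bounding each affected rank separately.
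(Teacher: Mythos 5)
Your proposal matches the paper's proof essentially step for step: the paper likewise argues that the random within-group pairing is a $1$-Lipschitz transformation (citing Definition 16 and Lemma 17 of~\citep{AMSSV20}), invokes the sensitivity bound of the Kruskal--Wallis absolute-value statistic from Theorem 3.4 of~\citep{CouchKSBG19}, and concludes via the Gaussian mechanism (Proposition 1.6 of~\citep{BunS16}) together with post-processing. The only detail you leave implicit is the explicit constant $\Delta_h = 8$, which is precisely what calibrates the noise $\calN(0, 8^2/(2\rho))$ in Algorithm~\ref{alg:t3}.
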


\begin{proof}

Algorithm~\ref{alg:t3} randomly pairs all $n_1$ pairs of
points in group 1 (to obtain slopes $s_1$ of size $n_1/2$)
and pairs all $n_2$ pairs in group 2 (to obtain 
slopes $s_2$ of size $n_2/2$). Note that this is a
1-Lipschitz transformation so that the differential privacy
guarantees are preserved (by Definition 16 and Lemma 17
of~\citep{AMSSV20}).

Then we proceed to use the DP Kruskal-Wallis absolute value
test statistic with sensitivity of 8 (as shown in Theorem 3.4 of~\citep{CouchKSBG19}).
By Proposition 1.6 in~\citep{BunS16}, via the use of the Gaussian
Mechanism, the procedure satisfies $\rho$-zCDP.
\end{proof}

\begin{algorithm}
\KwData{$X\in\reals^{n\times 2}; Y\in\reals^n$}
\KwIn{Even $n_1, n\in\mathbb{N}$; $\rho > 0$}

Let $x_1, \ldots, x_n$ be the observed 1-D independent variables from $X$

Let $\tau: [n]\rightarrow[n]$ be a randomly chosen permutation

$s_1 = \{\}$

\For{$i=1 \ldots n_1/2$} {

   $s_1 = s_1\,\,\bigcup\,\,\{\frac{Y_{\tau(n_1/2 + i)} - Y_{\tau(i)}}{x_{\tau(n_1/2 + i)} - x_{\tau(i)}}\}$

}

$n_2 = n - n_1$

$s_2 = \{\}$

$e = n_1 + n_2/2$

\For{$i=n_1+1 \ldots e$} {

   $s_2 = s_2\,\,\bigcup\,\,\{\frac{Y_{\tau(e + i)} - Y_{\tau(i)}}{x_{\tau(e + i)} - x_{\tau(i)}}\}$

}

Let $r:\reals^{m}\rightarrow[m]$ be rank-computing function on any $m$ elements

Compute $s$ by appending (in an order-preserving manner) $s_2$ to $s_1$

Compute $\bar{r}_1$, mean of ranks of $s_1$ in $r(s)$

Compute $\bar{r}_2$, mean of ranks of $s_2$ in $r(s)$

Compute $h = \frac{4(n-1)}{n^2}\left(n_1|\bar{r}_1-\frac{n+1}{2}|+n_2|\bar{r}_2-\frac{n+1}{2}|\right)$

\Return null, $h + \calN(0, 8^2/(2\rho))$

\caption{$\rho$-zCDP procedure $\dpkw$}
\label{alg:t3}
\end{algorithm}

\paragraph{Instantiating Algorithm~\ref{alg:mct} for the Kruskal-Wallis Mixture Tester:}
We use the returned statistic
$\tilde\theta_1 = (h)$ as the
sole statistic. In this case, $T$ is the identity function.
$\tilde\theta_0$ is taken to be null.
$P_{\tilde\theta_0}$ generates ${}^kx_i$ and ${}^ky_i$ (for the 2 groups) 
independently and uniformly at random in a fixed interval
(say $[-5, 5]$). Although this distribution may be very different from the actual
data distribution, the distribution of ranks of the slopes will be identical to that
under the null, ensuring that $T(({}^kx, {}^ky))$ has the right distribution.

\section{Differentially Private $F$-Statistic}
\label{sec:dpf}

In this section, we will
show that the DP $F$-statistic converges, 
in distribution,
to the asymptotic distribution of the $F$-statistic.
The focus will be on showing results for
Algorithm~\ref{alg:t1} but a similar route can be used to obtain
analogous results for Algorithm~\ref{alg:t2}.
Recall that Algorithm~\ref{alg:t1} is an instantiation of the
DP $F$-statistic for testing a linear relationship while
Algorithm~\ref{alg:t2} is for testing mixtures.

$T_n$ is the non-private $F$-statistic while
$\tilde{T}_n$ is the DP $F$-statistic
constructed from DP sufficient statistics
obtained via Algorithm~\ref{alg:t1}.
The main theorem in this section is Theorem~\ref{thm:dpf},
which shows the convergence, in distribution,
of $\tilde{T}_n$ to the asymptotic distribution of $T_n$,
the chi-squared distribution.
As a corollary, the statistical power of
$\tilde{T}_n$ converges to the statistical power of
$T_n$.
While Theorem~\ref{thm:dpf} is specialized to the simple
linear regression setting (i.e., $p = 2$), it can easily
be extended to multiple linear regression. 

\begin{theorem}
Let $\sigma_e > 0$, $r=p=2, q=1$, 
and $\beta\in\reals^p$. For every $n\in\naturals$ with
$n > r$, let
$X_n\in\reals^{n\times p}$ be the
design matrix where the first column is an
all-ones vector and the second column is
$(x_1, \ldots, x_n)^T$.
Let $\Delta = \Delta_n > 0$ be a sequence of
clipping bounds, $\rho = \rho_n > 0$ be a
sequence of privacy parameters, and
$\eta_n^2 = \frac{\norm{X_n\beta - X_n\beta^N}^2}{\sigma_e^2}$.
Under the general linear model (GLM),
$Y_n\sim\calN(X_n\beta, \sigma^2_eI_{n\times n})$.
Let $\tilde\beta$ and $\tilde\beta^N$ be the
DP least-squares
estimate of $\beta$, obtained in Algorithm~\ref{alg:t1}, under the alternative 
and null hypotheses, respectively.
Let $\tilde{T} = \tilde{T}_n$ be the DP $F$-statistic 
computed from DP sufficient statistics
via Algorithm~\ref{alg:t1} and Equation~(\ref{eq:glrt}). 
Suppose the following conditions hold:
\begin{enumerate}
\item $\exists c_x, c_{x^2}\in\reals$ such that
$\bar{x}\rightarrow c_x$,
$\XXm\rightarrow c_{x^2}$,
and
$c_{x^2} > c_x^2$,
\item $\exists \eta\in\reals$ such that
$\eta_n^2\rightarrow\eta^2$,
\item $\frac{\Delta_n^2}{\rho_n n}$,
$\frac{\Delta_n^4}{\rho_n n}\rightarrow 0$,
\item $\pr[\exists i\in[n], y_i\notin[-\Delta_n, \Delta_n]]\rightarrow 0$ and $\forall i\in[n], x_i\in[-\Delta_n, \Delta_n]$.
\end{enumerate}
Then we obtain the following results:
\begin{enumerate}
\item 
Under the null hypothesis:
$\tilde{\beta}^N = \tilde{\beta}_n^N \xrightarrow{P}\beta$,
\item
Under the alternative hypothesis:
$\tilde{\beta} = \tilde{\beta}_n \xrightarrow{P}\beta$,
\item
$\tilde{T} = \tilde{T}_n\xrightarrow{D}
\frac{\chi^2_{r-q}(\eta^2)}{r-q}$.
\end{enumerate}
\label{thm:dpf}
\end{theorem}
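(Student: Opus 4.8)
The plan is to reduce the DP statement to the non-private Theorem~\ref{thm:f} by showing that every differentially private sufficient statistic produced by Algorithm~\ref{alg:t1} equals its non-private empirical counterpart up to an error that vanishes fast enough, and that the (DP) $F$-statistic is a continuous function of these quantities. First I would dispose of the truncation: condition~4 guarantees $x_i\in[-\Delta_n,\Delta_n]$ always and $y_i\in[-\Delta_n,\Delta_n]$ for all $i$ with probability tending to $1$, so on an event of probability $1-o(1)$ none of the clippings $]_{-\Delta}^{\Delta}$, $]_0^{\Delta^2}$, $]_{-\Delta^2}^{\Delta^2}$ in Algorithm~\ref{alg:t1} are active, and each DP release equals the corresponding empirical mean plus a single Gaussian term.

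Next I would record the two rate facts that drive everything. For a release with added noise $\mathcal{N}(0,v_n)$, Chebyshev gives convergence in probability to its noiseless value once $v_n\to 0$, and gives $\sqrt{n}\cdot(\text{noise})\xrightarrow{P}0$ once $n\,v_n\to 0$. The variances in Algorithm~\ref{alg:t1} are all of the form $\Theta(\Delta_n^2/(\rho_n n^2))$ or $\Theta(\Delta_n^4/(\rho_n n^2))$, so condition~3 ($\Delta_n^2/(\rho_n n),\,\Delta_n^4/(\rho_n n)\to 0$) yields both $v_n\to 0$ and the stronger $n\,v_n\to 0$ simultaneously. Under the GLM I would then show the noiseless empirical statistics converge: writing $y_i=\beta_2+\beta_1x_i+e_i$, one gets $\bar y\xrightarrow{P}\beta_2+\beta_1c_x$, $\overline{xy}\xrightarrow{P}\beta_2c_x+\beta_1c_{x^2}$, and $\overline{y^2}$ to its limit, since the $e_i$-dependent remainders (e.g.\ $\tfrac1n\sum x_ie_i$) have variance $O(1/n)$. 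Combining with condition~1 and the continuous mapping theorem — the denominator $\widetilde{\sigma^2_x}=\widetilde{\XXm}-\tilde{\bar x}^2\xrightarrow{P}c_{x^2}-c_x^2>0$ stays bounded away from $0$ — gives $\tilde\beta\xrightarrow{P}\beta$ and, under the null, $\tilde\beta^N\xrightarrow{P}\beta$, proving results~1 and~2.

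For result~3 I would reformulate the statistic around the studentized slope. In the non-private case $T_n=\tfrac{1}{r-q}\,W_n^2\cdot\tfrac{\sigma_e^2}{S^2}$ with $W_n=\sqrt{n\,\widehat{\sigma^2_x}}\,\hat\beta_1/\sigma_e\sim\mathcal{N}(\eta_n,1)$ and $S^2\xrightarrow{P}\sigma_e^2$ (Theorem~\ref{thm:f}, part~3); the DP version is $\tilde T_n=\tfrac{1}{r-q}\,\tilde W_n^2\cdot\tfrac{\sigma_e^2}{\widetilde{S^2}}$ with $\tilde W_n=\sqrt{n\,\widetilde{\sigma^2_x}}\,\tilde\beta_1/\sigma_e$. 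The heart of the argument is $\tilde W_n-W_n\xrightarrow{P}0$: expanding $\tilde\beta_1-\hat\beta_1$, the leading perturbation is $(\text{noise in }\overline{xy})/\widehat{\sigma^2_x}$ plus lower-order denominator terms, and multiplying by $\sqrt{n\,\widetilde{\sigma^2_x}}$ leaves exactly the $\sqrt{n}\cdot(\text{noise})$ quantities that condition~3 sends to $0$, while $\widetilde{\sigma^2_x}/\widehat{\sigma^2_x}\xrightarrow{P}1$. Since $W_n\sim\mathcal{N}(\eta_n,1)$ and $\eta_n\to\eta$ by condition~2, Lemma~\ref{lem:helperprob}(1) gives $\tilde W_n\xrightarrow{D}\mathcal{N}(\eta,1)$, hence $\tilde W_n^2\xrightarrow{D}\chi^2_{r-q}(\eta^2)$ (recall $r-q=1$) by the continuous mapping theorem. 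Finally $\widetilde{S^2}\xrightarrow{P}\sigma_e^2$ (a continuous function of the convergent DP statistics), so $\sigma_e^2/\widetilde{S^2}\xrightarrow{P}1$, and Slutsky's theorem yields $\tilde T_n\xrightarrow{D}\chi^2_{r-q}(\eta^2)/(r-q)$.

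The main obstacle is precisely that $\sqrt{n}\,\hat\beta_1$ (equivalently $W_n$) does \emph{not} converge in probability — it has a genuinely random $\mathcal{N}(\eta,1)$ limit — so one cannot treat the numerator by continuous mapping of in-probability limits alone. The care is in isolating this convergent-in-distribution factor from the vanishing DP-noise factor and bounding their products: cross terms like $W_n\cdot(\tilde W_n-W_n)$ must be shown to go to $0$, which works only because $W_n=O_P(1)$ and $\tilde W_n-W_n=o_P(1)$, and the $o_P(1)$ there is exactly the $\sqrt{n}$-scaled noise that condition~3's $\Delta_n^4/(\rho_n n)\to0$ controls. A secondary bookkeeping point is checking that the several coupled denominator perturbations (from $\tilde{\bar x}$, $\widetilde{\XXm}$, $\tilde{\bar y}$) do not conspire to produce a non-vanishing $\sqrt{n}$-order contribution; this again reduces to the two rate facts above.
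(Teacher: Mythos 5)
Your proposal is correct, and its overall skeleton matches the paper's: dispose of the clipping via condition~4, show each DP release equals its empirical counterpart plus Gaussian noise whose $\sqrt{n}$-scaled magnitude vanishes under condition~3, deduce consistency of $\tilde\beta,\tilde\beta^N$, and then split $\tilde T_n$ into a numerator converging in distribution and a denominator converging in probability to $\sigma_e^2$, combined by Slutsky/continuous mapping. Where you genuinely diverge is in how the numerator's distributional limit is extracted. The paper rewrites both $T_n$ and $\tilde T_n$ in terms of the matrix square root $\hat E_n=(X_n^TX_n/n)^{1/2}$ of the Gram matrix (Lemmas~\ref{lem:fequiv} and~\ref{lem:dpfequiv}), shows $\sqrt{n}(\tilde E_n-\hat E_n)\xrightarrow{P}0$ and $\sqrt{n}(\tilde\beta-\hat\beta)\xrightarrow{P}0$, and then needs a dedicated lemma (Lemma~\ref{lem:squareddist}) to pass from ``$\tilde H_n-\hat H_n\xrightarrow{P}0$ and $\norm{\hat H_n}\xrightarrow{D}L$'' to $\norm{\tilde H_n}^2\xrightarrow{D}L^2$, precisely because only the \emph{norm} of the vector $\hat H_n$ has a known limiting law, not the vector itself. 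You instead exploit $r-q=1$ to collapse the numerator to the scalar studentized slope $W_n=\sqrt{n\widehat{\sigma^2_x}}\,\hat\beta_1/\sigma_e\sim\calN(\pm\eta_n,1)$ (the sign is that of $\beta_1$, which is immaterial after squaring), so that Lemma~\ref{lem:helperprob}(1) plus the ordinary continuous mapping theorem applied to $t\mapsto t^2$ suffice and no analogue of Lemma~\ref{lem:squareddist} is needed. This buys a more elementary argument for the simple-linear-regression case actually stated in the theorem, at the cost of generality: the paper's vector/Gram-root formulation extends directly to $r-q>1$ (the multiple-regression extension the paper advertises), whereas your scalar reduction would have to be redone. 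Your remaining steps --- the hybrid control of $\sqrt{n}(\tilde\beta_1-\hat\beta_1)$ through the perturbed denominator $\widetilde{\sigma^2_x}$, the square-root comparison $\sqrt{n}\bigl(\sqrt{\widetilde{\sigma^2_x}}-\sqrt{\widehat{\sigma^2_x}}\bigr)\xrightarrow{P}0$, and the identification $\widetilde{S^2}\xrightarrow{P}\sigma_e^2$ via Theorem~\ref{thm:f}(3) --- correspond to the paper's Lemmas~\ref{lem:hybrid}, \ref{lem:squareroot}, \ref{lem:t1conv1}, \ref{lem:t1conv2}, and~\ref{lem:convbeta}, and your explicit flagging of the $W_n=O_P(1)$, $\tilde W_n-W_n=o_P(1)$ cross-term issue is exactly the point those lemmas are built to handle.
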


The condition that
$\pr[\exists i\in[n], y_i\notin[-\Delta_n, \Delta_n]]\rightarrow 0$ (Condition 4 in Theorem~\ref{thm:dpf}),
holds by a Gaussian tail bound (Claim~\ref{claim:gausstail}),
if for all $i\in[n]$,
$\Delta_n \geq |\beta_1x_i + \beta_2| + \sigma_e\sqrt{\log 2n^{O(1)}}$.

First, we will prove convergence results for sufficient
statistics used to construct the non-private
$F$-statistic $T_n$, in our setting. Then we will show
convergence
results for DP sufficient statistics
used to construct the private $F$-statistic $\tilde{T}_n$.
Finally, we will combine these previous results to show
Theorem~\ref{thm:dpf}.

\subsection{Convergence of Non-private Sufficient Statistics}

In Equation~(\ref{eq:glrt}), the non-private $F$-statistic
is given as
$$
T = T_n = \frac{n-r}{r-q}\cdot\frac{\norm{X\hat\beta - X\hat\beta^N}^2}{\norm{Y-X\hat\beta}^2} =
\frac{n-r}{r-q}\cdot\frac{\norm{X_n\hat\beta - X_n\hat\beta^N}^2}{\norm{Y_n-X_n\hat\beta}^2}.
$$

We start by writing this $F$-statistic, in an equivalent form,
in terms of quantities that we will show are convergent:

\begin{lemma}
Suppose that $\sigma_e > 0$, $p\in\naturals$, 
and $\beta\in\reals^p$.
Let $X = X_n\in\reals^{n\times p}$ be the
full-rank design matrix 
(as in Equation~(\ref{eq:design})) and
$Y = Y_n\sim\calN(X_n\beta, \sigma_e^2I_{n\times n})$.
Also, let $\hat\beta$ and $\hat\beta^N$ be the non-private
least-squares
estimate of $\beta$ under the alternative and null hypotheses,
respectively.

Define the following quantities:
$$
\hat{E}_n = \left(\frac{X_n^TX_n}{n}\right)^{1/2}\in\reals^{p\times p},\quad \hat{F}_n = \frac{X_n^TY_n}{n}\in\reals^p,\quad \hat{G}_n = \frac{Y_n^TY_n}{n}\in\reals.
$$

Then the test statistic
$T_n$ from
Equation~(\ref{eq:glrt}) can be 
re-written as
\begin{equation}
T_n = \frac{n-r}{r-q}\cdot\frac{\norm{\sqrt{n}\hat{E}_n(\hat\beta - \hat\beta^N)}^2}{n(\hat\beta^T\hat{E}_n^2\hat\beta - 2\hat\beta^T\hat{F}_n + \hat{G}_n)},
\label{eq:f}
\end{equation}
for any $n, r, q\in\naturals$ such that $q < r$.

\label{lem:fequiv}
\end{lemma}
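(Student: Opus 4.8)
The plan is to verify the claimed identity by rewriting the numerator $\norm{X_n\hat\beta - X_n\hat\beta^N}^2$ and the denominator $\norm{Y_n - X_n\hat\beta}^2$ of $T_n$ separately in terms of $\hat{E}_n$, $\hat{F}_n$, and $\hat{G}_n$, since the prefactor $(n-r)/(r-q)$ already appears in the target expression. The whole argument is elementary algebra once I record one structural fact: because $X_n$ is full rank, the Gram matrix $X_n^TX_n$ is symmetric positive definite, so $\hat{E}_n = (X_n^TX_n/n)^{1/2}$ denotes the unique symmetric positive definite square root. In particular $\hat{E}_n = \hat{E}_n^T$ and $\hat{E}_n^2 = X_n^TX_n/n$, equivalently $X_n^TX_n = n\hat{E}_n^2$.

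First I would handle the numerator. Writing the squared norm as a quadratic form, $\norm{X_n(\hat\beta - \hat\beta^N)}^2 = (\hat\beta - \hat\beta^N)^T X_n^TX_n (\hat\beta - \hat\beta^N)$. Substituting $X_n^TX_n = n\hat{E}_n^2$ and then using the symmetry $\hat{E}_n^T\hat{E}_n = \hat{E}_n^2$, this becomes $n(\hat\beta - \hat\beta^N)^T\hat{E}_n^T\hat{E}_n(\hat\beta - \hat\beta^N) = \norm{\sqrt{n}\,\hat{E}_n(\hat\beta - \hat\beta^N)}^2$, which is exactly the numerator appearing in Equation~(\ref{eq:f}). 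The symmetry of $\hat{E}_n$ is precisely what allows the matrix quadratic form to collapse back into a single Euclidean norm.

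Next I would expand the denominator. Expanding the square gives $\norm{Y_n - X_n\hat\beta}^2 = Y_n^TY_n - 2\hat\beta^TX_n^TY_n + \hat\beta^TX_n^TX_n\hat\beta$. Substituting $Y_n^TY_n = n\hat{G}_n$, $X_n^TY_n = n\hat{F}_n$, and $X_n^TX_n = n\hat{E}_n^2$, then factoring out $n$, yields $n(\hat\beta^T\hat{E}_n^2\hat\beta - 2\hat\beta^T\hat{F}_n + \hat{G}_n)$, matching the denominator in Equation~(\ref{eq:f}). Dividing the two rewritten expressions and reinstating the prefactor $(n-r)/(r-q)$ completes the identity.

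There is essentially no hard step here: the only point requiring care is the choice of the symmetric square root for $\hat{E}_n$ (so that $\hat{E}_n^T\hat{E}_n = \hat{E}_n^2$, which is what lets the numerator fold into a single norm) together with the fact that full rank of $X_n$ guarantees $X_n^TX_n$ is invertible, so that both $\hat\beta = (X_n^TX_n)^{-1}X_n^TY_n$ and $\hat{E}_n$ are well-defined. I would emphasize that the reformulation is a pure restatement of $T_n$, valid for any $n, r, q$ with $q < r$; its purpose is not any gain in content but to isolate $\hat{E}_n$, $\hat{F}_n$, and $\hat{G}_n$ as the building blocks whose convergence (under the stated conditions on $\bar{x}$ and $\XXm$) will be established in the lemmas that follow.
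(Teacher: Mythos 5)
Your proposal is correct and follows essentially the same route as the paper's proof: rewrite the numerator as the quadratic form $(\hat\beta-\hat\beta^N)^TX_n^TX_n(\hat\beta-\hat\beta^N)$, substitute $X_n^TX_n=n\hat{E}_n^2$ and use the symmetry of the positive-definite square root to fold it into $\norm{\sqrt{n}\hat{E}_n(\hat\beta-\hat\beta^N)}^2$, then expand the denominator and substitute $n\hat{F}_n$ and $n\hat{G}_n$. Your explicit remarks on the existence, uniqueness, and symmetry of $\hat{E}_n$ match the paper's opening observation, so there is nothing to add.
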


\begin{proof}[Proof of Lemma~\ref{lem:fequiv}]

First, note that $\hat{E}_n\in\reals^{p\times p}$
(i)
exists because
$X_n^TX_n$
is positive definite,
(ii) is unique since its square is positive
definite~\citep{horn_johnson_2012}.

\begin{align*}
\norm{X_n\hat\beta-X_n\hat\beta^N}^2
&= (\hat\beta - \hat\beta^N)^TX_n^TX_n(\hat\beta - \hat\beta^N) \\
&= \sqrt{n}(\hat\beta - \hat\beta^N)^T\hat{E}_n^T\sqrt{n}\hat{E}_n(\hat\beta - \hat\beta^N) \\
&= \norm{\sqrt{n}\hat{E}_n(\hat\beta - \hat\beta^N)}^2.
\end{align*}

Next,
\begin{align*}
\norm{Y_n-X_n\hat\beta}^2
&= (Y_n-X_n\hat\beta)^T(Y_n-X_n\hat\beta) \\
&= Y_n^TY_n - Y_n^TX_n\hat\beta - \hat\beta^TX_n^TY_n + \hat\beta^TX_n^TX_n\hat\beta \\
&= Y_n^TY_n + \hat\beta^TX_n^TX_n\hat\beta - 2\hat\beta^TX_n^TY_n \\
&= n(\hat\beta^T\hat{E}_n^2\hat\beta - 2\hat\beta^T\hat{F}_n + \hat{G}_n).
\end{align*}

\end{proof}

It will be easier to use 
Equation~(\ref{eq:f}) as an equivalent form
of the $F$-statistic to prove convergence results.
An analogous representation will be used to
prove convergence results for the DP $F$-statistic.

In the case of testing a linear relationship
(as in Section~\ref{sec:testlinear}) in simple linear
regression (i.e., where $p = 2$ and the columns of $X$ are the all-ones vector and
$(x_1, \ldots, x_n)^T$),
$$
X^TX = \begin{pmatrix}
n & n\bar{x}\\
n\bar{x} & \XX\\
\end{pmatrix},\quad
X^TY = \begin{pmatrix}
n\bar{y}\\
\XY\\
\end{pmatrix},\quad
Y^TY = \sum_{i=1}^ny_i^2,
$$
$$
\hat\beta = \begin{pmatrix}
\hat\beta_2\\
\hat\beta_1\\
\end{pmatrix},\quad
\hat\beta^N =
\begin{pmatrix}\bar{y}\\0\end{pmatrix},
$$
$$
\widehat{\sigma^2_x} = \XXm-\bar{x}^2.
$$

In this case,
it
can be verified that
$\hat{E}_n, \hat{F}_n, \hat{G}_n$ is:
\begin{align}
\hat{E}_n &=
\frac{1}{\sqrt{\XXm+1+2\sqrt{\XXm-\bar{x}^2}}}
\begin{pmatrix}
1 + \sqrt{\XXm-\bar{x}^2} & \bar{x}\\
\bar{x} & \XXm+\sqrt{\XXm-\bar{x}^2}
\end{pmatrix},\nonumber\\
 &=
\frac{1}{\sqrt{\XXm+1+2\sqrt{\widehat{\sigma^2_x}}}}
\begin{pmatrix}
1 + \sqrt{\widehat{\sigma^2_x}} & \bar{x}\\
\bar{x} & \XXm+\sqrt{\widehat{\sigma^2_x}}
\end{pmatrix},\nonumber\\
\hat{F}_n &= \frac{X^TY}{n}
= \begin{pmatrix}
\bar{y}\\
\XYm
\end{pmatrix},
\quad \hat{G}_n = \frac{Y^TY}{n} \defeq \YYm.
\label{eq:efg}
\end{align}

Next, we proceed to
show non-private convergence results
that will be pivotal to our final result. We will
crucially rely on the Gaussian tail bound, the normality of
$\hat\beta, \hat\beta^N$, and
Corollary~\ref{cor:gausstail}.

\begin{lemma}
For every sequence of clipping bounds
$\Delta = \Delta_n > 0$ and
sequence of privacy parameters
$\rho = \rho_n > 0$, under the conditions of
Theorem~\ref{thm:dpf}:
\begin{enumerate}
\item[(1)] $\exists c_y\in\reals$ such that
$\bar{y}\xrightarrow{P} c_y$,
\item[(2)]$\exists c_a, c_{xy}\in\reals$
such that $\XYm\xrightarrow{P}c_{xy}$,
$\XYm-\bar{x}\cdot\bar{y}\xrightarrow{P} c_a$,
\item[(3)] $\exists c_b\neq 0$ such that
$\widehat{\sigma^2_x}\xrightarrow{P} c_b$,
\item[(4)] $\exists$ unique positive-definite $C^{1/2}\in\reals^{2\times 2}$ 
such that
$\hat{E}_n\rightarrow C^{1/2}$,
\item[(5)] $\hat{F}_n \xrightarrow{P} (c_y\quad c_{xy})^T$,
\item[(6)] $\exists c_{y^2}\in\reals$ such that
$\hat{G}_n = \frac{Y^TY}{n} \xrightarrow{P} c_{y^2}$,
\item[(7)] Normality of $\hat\beta$: $\hat\beta \sim \calN\left(\beta, \sigma_e^2(X_n^TX_n)^{-1}\right)$; Consistency of $\hat\beta$: $\hat\beta\xrightarrow{P}\beta$.
\end{enumerate}

\label{lem:nonprivateconv}
\end{lemma}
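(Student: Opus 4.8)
The plan is to reduce every claim to a weak law of large numbers, exploiting that the design points $x_1,\ldots,x_n$ are deterministic constants, so that $\bar{x}$, $\XXm$, and $\widehat{\sigma^2_x}$ are deterministic sequences, while all randomness enters through the i.i.d.\ errors via $y_i = \beta_2 + \beta_1 x_i + e_i$ with $e_i\sim\calN(0,\sigma_e^2)$. Note that only Condition 1 of Theorem~\ref{thm:dpf} (together with the GLM) is actually used here; the clipping and privacy conditions 3--4 play no role for these non-private quantities. Throughout I would combine convergent sequences using the continuous mapping theorem and Lemma~\ref{lem:helperprob}.

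First I would dispatch the two deterministic claims. Since $\widehat{\sigma^2_x} = \XXm - \bar{x}^2 \to c_{x^2} - c_x^2 =: c_b$ and Condition 1 gives $c_{x^2} > c_x^2$, we obtain $c_b > 0$, proving (3). For (4), observe that $X_n^TX_n/n \to C := \left(\begin{smallmatrix} 1 & c_x \\ c_x & c_{x^2}\end{smallmatrix}\right)$, which is positive definite because $\det C = c_{x^2}-c_x^2 > 0$ and its trace is positive; since the matrix square root is continuous on the positive-definite cone and $\hat{E}_n = (X_n^TX_n/n)^{1/2}$, it follows that $\hat{E}_n \to C^{1/2}$, the unique positive-definite root.

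Next I would treat the random averages (1), (2), (5), (6) by a single template: substitute $y_i = \beta_2 + \beta_1 x_i + e_i$, split each average into a deterministic part converging by Condition 1 plus zero-mean noise terms, and control the noise by Chebyshev. For (1), $\bar{y} = \beta_2 + \beta_1\bar{x} + \bar{e}$ with $\var(\bar{e}) = \sigma_e^2/n \to 0$, so $\bar{y}\xrightarrow{P}\beta_2 + \beta_1 c_x =: c_y$. For (2), $\XYm = \beta_2\bar{x} + \beta_1\XXm + \tfrac{1}{n}\sum_i x_i e_i$, where the last term has variance $\tfrac{\sigma_e^2}{n}\XXm\to 0$, giving $\XYm\xrightarrow{P}\beta_2 c_x + \beta_1 c_{x^2} =: c_{xy}$; then $\XYm - \bar{x}\bar{y}\xrightarrow{P}c_{xy} - c_x c_y =: c_a$. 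Claim (5) is the vector assembly of (1)--(2). For (6), expanding $y_i^2$ yields a deterministic mean-square term converging to $\beta_2^2 + 2\beta_1\beta_2 c_x + \beta_1^2 c_{x^2}$, a cross term of vanishing variance, and $\tfrac{1}{n}\sum_i e_i^2\xrightarrow{P}\sigma_e^2$, whence $\hat{G}_n\xrightarrow{P}c_{y^2} := \beta_2^2 + 2\beta_1\beta_2 c_x + \beta_1^2 c_{x^2} + \sigma_e^2$. Finally, (7) follows since $\hat\beta = (X_n^TX_n)^{-1}X_n^TY_n$ is a linear image of the Gaussian $Y_n$, hence Gaussian with mean $\beta$ and covariance $\sigma_e^2(X_n^TX_n)^{-1} = \tfrac{\sigma_e^2}{n}\hat{E}_n^{-2}$; as $\hat{E}_n^{-2}\to C^{-1}$ is bounded, this covariance vanishes and Chebyshev gives $\hat\beta\xrightarrow{P}\beta$.

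The only recurring subtlety, and the closest thing to an obstacle, is verifying that each noise variance tends to zero: this rests on the elementary observation that whenever $\tfrac{1}{n}\sum_i a_i$ has a finite limit, the associated quantity $\tfrac{1}{n^2}\sum_i a_i = \tfrac{1}{n}\cdot\tfrac{1}{n}\sum_i a_i \to 0$, which is exactly what makes all the cross terms negligible. The single place this template fails verbatim is the quadratic term $\tfrac{1}{n}\sum_i e_i^2$ in (6), which instead requires the law of large numbers for the i.i.d.\ sequence $e_i^2$ (whose mean $\sigma_e^2$ and variance $2\sigma_e^4$ are finite).
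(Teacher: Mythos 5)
Your proposal is correct and follows essentially the same route as the paper's proof: decompose each statistic into a deterministic part controlled by Condition 1 plus zero-mean noise with vanishing variance, with the quadratic term in (6) handled by a law of large numbers for the squared errors. The only differences are cosmetic — you invoke Chebyshev and continuity of the matrix square root where the paper uses a Gaussian tail bound and the explicit $2\times 2$ square-root formula.
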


To prove Lemma~\ref{lem:nonprivateconv}, we will make use of the following tools:
the Gaussian tail bound and Slutsky's Theorem, which we state below:

\begin{claim}[Gaussian Tail Bound]
Let $Z$ be a standard normal random variable with mean 0 and variance 1. i.e., $Z\sim\calN(0, 1)$.
Then
$$
\pr[|Z| > t] \leq 2\exp(-t^2/2),
$$
for every $t > 0$.
\label{claim:gausstail}
\end{claim}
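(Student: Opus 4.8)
The plan is to prove the one-sided bound $\pr[Z > t] \leq \exp(-t^2/2)$ first and then obtain the two-sided statement by symmetry. Since $Z \sim \calN(0,1)$ is symmetric about the origin, $\pr[Z < -t] = \pr[Z > t]$, so $\pr[|Z| > t] = \pr[Z > t] + \pr[Z < -t] = 2\,\pr[Z > t]$. Consequently, any bound of $\exp(-t^2/2)$ on the upper tail immediately yields the claimed $2\exp(-t^2/2)$, and in fact a factor of two of slack.

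For the upper tail I would use the standard Chernoff (exponential Markov) argument. For any $\lambda > 0$, monotonicity of $x \mapsto e^{\lambda x}$ together with Markov's inequality gives
$$
\pr[Z > t] = \pr[e^{\lambda Z} > e^{\lambda t}] \leq e^{-\lambda t}\,\E[e^{\lambda Z}].
$$
The moment generating function of the standard normal is $\E[e^{\lambda Z}] = \exp(\lambda^2/2)$, a one-line Gaussian integral obtained by completing the square in the exponent. Hence $\pr[Z > t] \leq \exp(\lambda^2/2 - \lambda t)$ for every $\lambda > 0$. Minimizing the exponent $\lambda^2/2 - \lambda t$ over $\lambda$ — the only genuine optimization step — sets $\lambda = t$, which is admissible precisely because $t > 0$, and yields $\pr[Z > t] \leq \exp(-t^2/2)$, exactly as required.

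There is essentially no obstacle here: the argument is elementary and self-contained once the Gaussian moment generating function is recorded, and the sole point needing a moment of care is the legitimacy of the choice $\lambda = t$. As an alternative that avoids the moment generating function entirely, one could bound the tail integral directly: writing $\pr[Z>t] = \tfrac{1}{\sqrt{2\pi}}\int_t^\infty e^{-x^2/2}\,dx$, substituting $x = t+u$, and using $e^{-tu} \leq 1$ for $u,t \geq 0$ gives
$$
\int_t^\infty e^{-x^2/2}\,dx \leq e^{-t^2/2}\int_0^\infty e^{-u^2/2}\,du = \sqrt{\pi/2}\;e^{-t^2/2},
$$
so that $\pr[Z > t] \leq \tfrac12\exp(-t^2/2)$ and therefore $\pr[|Z| > t] \leq \exp(-t^2/2) \leq 2\exp(-t^2/2)$. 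Either route closes the claim, and I would present the Chernoff version as the primary proof since it produces the stated form most transparently.
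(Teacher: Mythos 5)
Your proof is correct. The paper itself states Claim~\ref{claim:gausstail} as a standard fact and offers no proof, so there is nothing to compare against; your Chernoff/moment-generating-function argument is the canonical way to establish it, and the optimization step $\lambda = t$ is handled properly (it is admissible since $t > 0$). Your alternative direct bound on the tail integral via the substitution $x = t + u$ and the inequality $e^{-tu} \leq 1$ is also correct and in fact yields the sharper constant $\pr[|Z| > t] \leq \exp(-t^2/2)$, which more than suffices for the stated bound and for every use of the claim in the paper (all that is needed downstream, e.g.\ in Corollary~\ref{cor:gausstail}, is that the tail probability vanishes as the variance goes to zero). Either version would be a fine proof to include.
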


By the Gaussian tail bound, 
any Gaussian
random variable (such as the DP estimates) converges,
in probability, to the asymptotic distributions
of the estimates without Gaussian noise added as long as the
variance goes to 0 (Corollary~\ref{cor:gausstail}):

\begin{corollary}
Let $N_n\sim\calN(0, \sigma_n^2)$ where
$\sigma_n\rightarrow 0$, then
$N_n\xrightarrow{P} 0$.
\label{cor:gausstail}
\end{corollary}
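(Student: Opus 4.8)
The plan is to unwind the definition of convergence in probability (Definition~\ref{def:prob}) and bound the relevant tail probability directly using Claim~\ref{claim:gausstail}. Fix $\eps > 0$; the goal is to show $\lim_{n\to\infty}\pr[|N_n| > \eps] = 0$, since the limiting constant here is $0$.

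First I would standardize. For those indices $n$ with $\sigma_n > 0$, write $N_n = \sigma_n Z$ where $Z\sim\calN(0,1)$, so that the event $\{|N_n| > \eps\}$ is identical to $\{|Z| > \eps/\sigma_n\}$. Applying the Gaussian tail bound with $t = \eps/\sigma_n > 0$ then gives
$$
\pr[|N_n| > \eps] = \pr\left[|Z| > \frac{\eps}{\sigma_n}\right] \leq 2\exp\left(-\frac{\eps^2}{2\sigma_n^2}\right).
$$

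Next I would take the limit in $n$. Since $\sigma_n\rightarrow 0$, for any fixed $\eps > 0$ the exponent satisfies $\eps^2/(2\sigma_n^2)\rightarrow\infty$, so the right-hand side tends to $0$. Hence $\pr[|N_n| > \eps]\rightarrow 0$ for every $\eps > 0$, which is precisely the statement $N_n\xrightarrow{P} 0$.

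The only point requiring any care is the degenerate case $\sigma_n = 0$, where the tail bound (stated only for $t > 0$) does not directly apply; here, however, $N_n\equiv 0$ almost surely, so $\pr[|N_n| > \eps] = 0$ holds trivially and the displayed bound is vacuously true. Beyond this edge case there is no substantive obstacle: the entire content is the exponential decay supplied by the Gaussian tail bound, and the corollary follows essentially immediately once the event is rewritten in terms of a standard normal.
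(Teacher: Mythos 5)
Your proof is correct and follows exactly the route the paper indicates (the paper simply asserts that the corollary follows from the definition of convergence in probability together with Claim~\ref{claim:gausstail}); you have just written out the standardization, the tail bound with $t = \eps/\sigma_n$, and the limit explicitly, plus the harmless $\sigma_n = 0$ edge case. Nothing further is needed.
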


Corollary~\ref{cor:gausstail} follows from the definition
of convergence in probability and the Gaussian tail bound
(Claim~\ref{claim:gausstail}).

Next, we introduce Slutsky's Theorem which
will be crucial to
combining individual convergence results to show more general
results:

\begin{theorem}[Slutsky's Theorem, see~\citep{gut2013probability}]
Let $\{W_n\}, \{Z_n\}$ be a sequence of random vectors
and $W$ be a random vector.
If $W_n\xrightarrow{D}W$ and
$Z_n\xrightarrow{P}c$ for a constant $c\in\reals$,
then as $n\rightarrow\infty$:
\begin{enumerate}
\item $W_n\cdot Z_n\xrightarrow{D} Wc$,
\item $W_n + Z_n\xrightarrow{D}W + c$,
\item $W_n/Z_n \xrightarrow{D} W/c$ as long as
$c\neq 0$.
\end{enumerate}
\label{thm:slutsky}
\end{theorem}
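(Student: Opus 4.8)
The plan is to reduce all three conclusions to a single joint convergence statement and then finish by the continuous mapping theorem, so that no step circularly invokes Slutsky's Theorem itself. Concretely, I would first establish the auxiliary fact that
$$(W_n, Z_n) \xrightarrow{D} (W, c),$$
i.e. that marginal convergence in distribution of $W_n$ together with convergence in probability of $Z_n$ to the \emph{constant} $c$ upgrades to \emph{joint} convergence in distribution of the stacked vector. Granting this, each claim follows by applying the continuous mapping theorem to a fixed continuous map: $g_+(w,z)=w+z$ for the sum, $g_\times(w,z)=w\cdot z$ for the product, and $g_\div(w,z)=w/z$ for the quotient. Indeed $g_+(W_n,Z_n)=W_n+Z_n$, $g_\times(W_n,Z_n)=W_nZ_n$, $g_\div(W_n,Z_n)=W_n/Z_n$, while the corresponding values at the limit $(W,c)$ are $W+c$, $Wc$, and $W/c$, exactly the three right-hand sides.

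The joint convergence step is where the real content lies, and it is precisely where Lemma~\ref{lem:helperprob}(1) does the work. I would write the stacked sequence as a vanishing perturbation of one whose limit is already known: set $X_n=(W_n,c)$ and $Y_n=(W_n,Z_n)$. Then $X_n-Y_n=(0,\,c-Z_n)\xrightarrow{P}0$ because $Z_n\xrightarrow{P}c$. It remains to check $X_n=(W_n,c)\xrightarrow{D}(W,c)$, which is immediate from the definition: the joint CDF of $(W_n,c)$ factors as $F_{W_n}(x)\,\ind\{c\le z\}$, and at every point $(x,z)$ with $z\neq c$ and $x$ a continuity point of $F_W$ this converges to $F_W(x)\,\ind\{c\le z\}$, the joint CDF of $(W,c)$; the points $z=c$ are discontinuities of the limit CDF and need not be checked. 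With $X_n\xrightarrow{D}(W,c)$ and $X_n-Y_n\xrightarrow{P}0$, Lemma~\ref{lem:helperprob}(1) yields $Y_n=(W_n,Z_n)\xrightarrow{D}(W,c)$.

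The main obstacle, and the one subtlety I would handle explicitly, is the quotient case, since $g_\div(w,z)=w/z$ is \emph{not} continuous on all of $\reals^2$: its discontinuity set is $D=\{(w,z):z=0\}$. The form of the continuous mapping theorem I would invoke only requires $D$ to be null under the law of the limit, i.e. $\pr[(W,c)\in D]=0$. Because the limiting vector has deterministic second coordinate $c$ and the hypothesis assumes $c\neq 0$, we have $\pr[(W,c)\in D]=\pr[c=0]=0$, so the theorem applies and gives $W_n/Z_n\xrightarrow{D}W/c$. The maps $g_+$ and $g_\times$ are globally continuous, so no such care is needed there.

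As a self-contained alternative that avoids the general continuous mapping theorem and leans only on the stated Lemma~\ref{lem:helperprob}, I could argue each part directly by the same asymptotic-equivalence device. For the sum, $(W_n+Z_n)-(W_n+c)=Z_n-c\xrightarrow{P}0$ while $W_n+c\xrightarrow{D}W+c$ (translation preserves convergence in distribution), and Lemma~\ref{lem:helperprob}(1) concludes. For the product the comparison term is $W_n(Z_n-c)$, and the delicate point is to show it vanishes in probability by combining $Z_n-c\xrightarrow{P}0$ with tightness of $\{W_n\}$ (which follows from $W_n\xrightarrow{D}W$), splitting on the event $\{\norm{W_n}\le M\}$ for a large constant $M$; this is the step that makes transparent why $c$ being a \emph{constant} rather than a random limit is essential. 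For the quotient I would first show $1/Z_n\xrightarrow{P}1/c$ using $c\neq 0$ (via $|1/Z_n-1/c|=|c-Z_n|/(|Z_n||c|)$ and the fact that $|Z_n|\ge|c|/2$ with probability tending to one), then reduce to the product case. I would present the joint-convergence route as primary since it is cleanest and dispatches all three parts uniformly, keeping the tightness-based product argument as the fallback.
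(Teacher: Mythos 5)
Your proposal is correct, but there is nothing in the paper to compare it against line by line: the paper states Theorem~\ref{thm:slutsky} with only a citation to Gut's textbook and supplies no proof. Your argument is the standard one, and it fits the paper's own toolkit cleanly: you upgrade the marginal hypotheses to joint convergence $(W_n,Z_n)\xrightarrow{D}(W,c)$ by writing $(W_n,Z_n)$ as the vanishing perturbation $(0,\,Z_n-c)$ of $(W_n,c)$ and invoking Lemma~\ref{lem:helperprob}(1), then finish all three parts uniformly with the continuous mapping theorem (Theorem~\ref{thm:cmt}), correctly observing that the only delicate case is the quotient, whose discontinuity set $\{z=0\}$ is null under the law of $(W,c)$ precisely because $c\neq 0$ is deterministic. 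Two small remarks. First, your claim that every point with $z=c$ is a discontinuity of the limit CDF is not quite right: at $(x,c)$ with $F_W(x)=0$ the limit CDF $F_W(x)\,\ind\{z\geq c\}$ is continuous; but no gap results, since monotonicity forces $F_W$ to be continuous at any such $x$, whence $F_{W_n}(x)\to 0$ and the required convergence holds at those points too. Second, your stated goal of avoiding circularity is achieved only relative to the paper's structure: the paper itself proves Lemma~\ref{lem:helperprob} by citing Theorem~2.7 of van der Vaart, which packages Slutsky-type statements, so a fully self-contained development would need an independent proof of Lemma~\ref{lem:helperprob}(1); your tightness-based fallback for the product and the $|Z_n|\geq |c|/2$ argument for the quotient are exactly the right elementary substitutes and make the whole chain self-contained if desired.
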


\begin{proof}[Proof of Lemma~\ref{lem:nonprivateconv}]

(1):
By definition,
for all $i\in[n]$,
$y_i\sim\beta_2 + \beta_1x_i + \calN(0, \sigma_e^2)$.
Then,
$\bar{y}\sim\beta_2 + \beta_1\bar{x} + \calN(0, \frac{\sigma_e^2}{n})$.
By Slutsky's 
Theorem and Corollary~\ref{cor:gausstail},
$\bar{y}\xrightarrow{P}\beta_2 + \beta_1c_x \defeq c_y$.
As a result, $\bar{y}\xrightarrow{P} c_y\in\reals$.

(2):
Also,
\begin{align*}
&\XYm = \frac{1}{n}\sum_{i=1}^nx_iy_i \\
&\sim \frac{1}{n}\sum_{i=1}^nx_i\calN\left(\beta_2 + \beta_1x_i, \sigma_e^2\right)\\
&= \frac{1}{n}\sum_{i=1}^n\left(\beta_2x_i + \beta_1x_i^2\right) + \calN\left(0, \frac{1}{n^2}\sum_{i=1}^n\sigma_e^2x_i^2\right).
\end{align*}

From the assumptions of Theorem~\ref{thm:dpf},
$\frac{1}{n}\sum_{i=1}^n(\beta_2x_i + \beta_1x_i^2)
\rightarrow \beta_2c_x+\beta_1c_{x^2}$ and
$\frac{\sigma_e^2}{n^2}\sum_{i=1}^nx_i^2 = \frac{\sigma_e^2}{n}\XXm\rightarrow 0$.
Then by Slutsky's Theorem and Corollary~\ref{cor:gausstail},
$\XYm\xrightarrow{P} \beta_2c_x+\beta_1c_{x^2} \defeq c_{xy}$ and
$\XYm-\bar{x}\cdot\bar{y}\xrightarrow{P} c_{xy}-c_xc_y \defeq c_a$.

(3): By Slutsky's Theorem and the assumptions in
Theorem~\ref{thm:dpf} that
$\XXm\rightarrow{c_{x^2}}$,
$\bar{x}\rightarrow c_x$, $c_{x^2}\neq c_x^2$, we have
that $\exists c_b\neq 0$ such that
$\widehat{\sigma^2_x}\rightarrow c_{x^2}-c_x^2 \defeq c_b$.

(4):
By Equation~(\ref{eq:efg}), we have
\begin{align}
\hat{E}_n &= \frac{1}{\sqrt{\XXm+1+2\sqrt{\widehat{\sigma^2_x}}}}
\begin{pmatrix}
1 + \sqrt{\widehat{\sigma^2_x}} & \bar{x}\\
\bar{x} & \XXm+\sqrt{\widehat{\sigma^2_x}}
\end{pmatrix}  \\
&\rightarrow
\frac{1}{\sqrt{c_{x^2}+1+2\sqrt{c_{x^2}-c_x^2}}}
\begin{pmatrix}
1 + \sqrt{c_{x^2}-c_x^2} & c_x\\
c_x & c_{x^2}+\sqrt{c_{x^2}-c_x^2}
\end{pmatrix} \defeq C^{1/2}.
\end{align}

(5):
Next,
$$
\hat{F}_n = \frac{X^TY}{n}
= \begin{pmatrix}
\bar{y}\\
\XYm
\end{pmatrix}
\xrightarrow{P}
\begin{pmatrix}
c_y\\
c_{xy}
\end{pmatrix}.
$$

(6): By the weak law of large numbers (Lemma~\ref{lem:wlln}),
$\frac{\chi^2_{n}}{n}\xrightarrow{P} 1$. Then,
\begin{align*}
\hat{G}_n &= \frac{\sum_{i=1}^ny_i^2}{n} \\
&\sim \frac{1}{n}\sum_{i=1}^n\left(\beta_2 + \beta_1x_i + \calN(0, \sigma_e^2)\right)^2 \\
&= \beta_2^2 + 2\bar{x}\beta_1\beta_2 + 2\beta_2\calN\left(0, \frac{\sigma_e^2}{n}\right) + \beta_1^2\XXm + 
2\beta_1\bar{x}\calN\left(0, \frac{\sigma_e^2}{n}\right) + \sigma_e^2\frac{\chi^2_{n}}{n} \\
&\xrightarrow{P} \beta_2^2 + 2c_x\beta_1\beta_2 + \beta_1^2c_{x^2} +  \sigma_e^2 \\
&\defeq c_{y^2},
\end{align*}
via the use of Slutsky's Theorem,
Corollary~\ref{cor:gausstail}, and assumptions that
$\bar{x}\rightarrow c_x, \XXm\rightarrow c_{x^2}$.

(7):
First, we recall that $\hat\beta$, the non-private OLS estimate,
is Gaussian and centered at $\beta$:

$$
\hat\beta \sim \calN\left(\beta, \sigma_e^2(X^TX)^{-1}\right) = \calN\left(\beta, \sigma_e^2\hat{E}_n^{-2}/n\right),
$$

This follows from 
Equations (3.9) and (3.10) in~\citep{HastieTF09} for any design matrix 
$X\in\reals^{n\times 2}$.

Since $\hat{E}_n\rightarrow C^{1/2}$, it follows that
$\hat{E}_n^{-2}/n\rightarrow 0$ so that
by Corollary~\ref{cor:gausstail},
$\hat\beta\xrightarrow{P}\beta$.

\end{proof}

Now,
we will show that the DP statistics converge, either in probability
or distribution, to the distributions of their corresponding
non-DP statistics.

\subsection{Convergence of Differentially Private Sufficient Statistics}

The DP $F$-statistic is constructed via
Algorithm~\ref{alg:t1} and Equation~(\ref{eq:glrt}).
We start by rewriting the DP $F$-statistic analogously to
Lemma~\ref{lem:fequiv}:

\begin{lemma}
Suppose that $\sigma_e > 0$, $p\in\naturals$, 
and $\beta\in\reals^p$.
Let $X = X_n\in\reals^{n\times p}$ be the
full-rank design matrix 
(as in Equation~(\ref{eq:design})) and
$Y = Y_n\sim\calN(X_n\beta, \sigma_e^2I_{n\times n})$.
Also, let 
$\tilde{\bar{x}}, \tilde{\bar{y}}$,
$\widetilde{\XXm}, \widetilde{\XYm}$,
$\widetilde{\YYm}$,
$\tilde\beta_1$, and $\tilde\beta_2$
be as computed in Algorithm~\ref{alg:t1}.

Define the following quantities:

\begin{align}
&\widetilde{\sigma^2_x} \defeq \widetilde{\XXm}-\tilde{\bar{x}}^2,\\
&\tilde{E}_n = \left(\frac{\widetilde{X^TX}}{n}\right)^{1/2} =
\frac{1}{\sqrt{\widetilde{\XXm}+1+2\sqrt{\widetilde{\sigma^2_x}}}}
\begin{pmatrix}
1 + \sqrt{\widetilde{\sigma^2_x}} & \tilde{\bar{x}}\\
\tilde{\bar{x}} & \widetilde{\XXm}+\sqrt{\widetilde{\sigma^2_x}}
\end{pmatrix},\nonumber\\
& \tilde{F}_n = \frac{\widetilde{X^TY}}{n}
= \begin{pmatrix}
\tilde{\bar{y}}\\
\widetilde{\XYm}
\end{pmatrix},
\quad \tilde{G}_n = \widetilde{\YYm},
\label{eq:dpefg}\\
&\tilde\beta = \begin{pmatrix}
\tilde\beta_2\\
\tilde\beta_1\\
\end{pmatrix},\quad
\tilde\beta^N =
\begin{pmatrix}\tilde{\bar{y}}\\0\end{pmatrix}.
\end{align}

where we take $\sqrt{\widetilde{\sigma^2_x}}$ to be the square root of 
$\widetilde{\sigma^2_x}$ with
non-negative real and imaginary parts.

Furthermore,
if
$\tilde{T}_n = T(\tilde\theta_1)$ is the test statistic obtained via statistics
computed in Algorithm~\ref{alg:t1} and via
Equation~(\ref{eq:glrt}), then $\tilde{T}_n$ can be 
re-written as
\begin{equation}
\tilde{T} = \tilde{T}_n = \frac{n-r}{r-q}\cdot\frac{\norm{\sqrt{n}\tilde{E}_n(\tilde\beta - \tilde\beta^N)}^2}{n(\tilde\beta^T\tilde{E}_n^2\tilde\beta - 2\tilde\beta^T\tilde{F}_n + \tilde{G}_n)},
\label{eq:dpf}
\end{equation}
for any $n, r, q\in\naturals$ such that $q < r$.

\label{lem:dpfequiv}
\end{lemma}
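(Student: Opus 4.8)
The plan is to follow the proof of Lemma~\ref{lem:fequiv} almost verbatim, since Equation~(\ref{eq:dpf}) is the same algebraic rewriting with every non-private quantity replaced by its DP counterpart. The only genuinely new ingredient is that the reconstructed Gram matrix $\widetilde{X^TX}/n$ need no longer be positive definite: when the added noise drives $\widetilde{\sigma^2_x} = \widetilde{\XXm} - \tilde{\bar{x}}^2$ below zero, the matrix is not PSD, so I cannot invoke the existence/uniqueness of a positive-definite square root as was done for $\hat{E}_n$. Accordingly, the first step is to establish directly the single identity on which everything rests, namely $\tilde{E}_n^2 = \widetilde{X^TX}/n = \bigl(\begin{smallmatrix} 1 & \tilde{\bar{x}} \\ \tilde{\bar{x}} & \widetilde{\XXm}\end{smallmatrix}\bigr)$. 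Rather than citing a matrix square-root theorem, I would verify this by a direct $2\times 2$ multiplication of the explicit form of $\tilde{E}_n$ in Equation~(\ref{eq:dpefg}), checking the diagonal and off-diagonal entries with $\sqrt{\widetilde{\sigma^2_x}}$ taken, per the stated convention, as the complex root with nonnegative real and imaginary parts. This computation is formally identical to the one already recorded in Equation~(\ref{eq:efg}) with hats replaced by tildes, so it is a short check valid over $\complex$.

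With $\tilde{E}_n^2 = \widetilde{X^TX}/n$ in hand, the two quadratic-form computations of Lemma~\ref{lem:fequiv} transfer directly. For the numerator I would write $\norm{\sqrt{n}\tilde{E}_n(\tilde\beta - \tilde\beta^N)}^2 = n(\tilde\beta - \tilde\beta^N)^T\tilde{E}_n^2(\tilde\beta - \tilde\beta^N) = (\tilde\beta - \tilde\beta^N)^T\widetilde{X^TX}(\tilde\beta - \tilde\beta^N)$, the DP analogue of $\norm{X\hat\beta - X\hat\beta^N}^2$; since $\tilde\beta_2 = \tilde{\bar{y}} - \tilde\beta_1\tilde{\bar{x}}$ holds identically for the Algorithm~\ref{alg:t1} estimates, we have $\tilde\beta - \tilde\beta^N = \tilde\beta_1(-\tilde{\bar{x}},1)^T$, which collapses the form to $n\tilde\beta_1^2\widetilde{\sigma^2_x}$. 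For the denominator I would expand $n(\tilde\beta^T\tilde{E}_n^2\tilde\beta - 2\tilde\beta^T\tilde{F}_n + \tilde{G}_n) = \tilde\beta^T\widetilde{X^TX}\tilde\beta - 2n\tilde\beta^T\tilde{F}_n + n\tilde{G}_n$ and read off, term by term, that this matches $(n-r)$ times the $\widetilde{S^2}$ computed in step 7 of Algorithm~\ref{alg:t1}. Dividing the two pieces yields Equation~(\ref{eq:dpf}); specializing to $r-q=1$ reduces it to $\tilde\beta_1^2 n\widetilde{\sigma^2_x}/\widetilde{S^2}$, which is exactly the $T(\tilde\theta_1)$ used in the instantiation.

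The one point I would be careful about, and the closest thing to an obstacle, is the meaning of $\norm{\cdot}^2$ when $\tilde{E}_n$ is complex. To keep the rewriting an exact algebraic identity, and in particular to allow the statistic to take the negative values that trigger the $\perp$ branch, I would read $\norm{v}^2$ as the bilinear form $v^Tv$ rather than the Hermitian $v^*v$, so that $\tilde{E}_n^T\tilde{E}_n = \tilde{E}_n^2$ holds by symmetry and the value coincides with the real quadratic form $(\tilde\beta-\tilde\beta^N)^T\widetilde{X^TX}(\tilde\beta-\tilde\beta^N)$. This is purely a bookkeeping convention; once it is fixed, no analysis is required, and the result is a mechanical rewriting that sets up the convergence arguments in the next subsection.
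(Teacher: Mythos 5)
Your proposal is correct and follows essentially the same route as the paper: both proofs reduce Equation~(\ref{eq:dpf}) to the two quadratic-form identities $(\tilde\beta-\tilde\beta^N)^T\widetilde{X^TX}(\tilde\beta-\tilde\beta^N)=\norm{\sqrt{n}\tilde{E}_n(\tilde\beta-\tilde\beta^N)}^2$ and $\widetilde{Y^TY}-2\tilde\beta^T\widetilde{X^TY}+\tilde\beta^T\widetilde{X^TX}\tilde\beta=n(\tilde\beta^T\tilde{E}_n^2\tilde\beta-2\tilde\beta^T\tilde{F}_n+\tilde{G}_n)$, which is all the paper's proof records. Your additional care about the case $\widetilde{\sigma^2_x}<0$ --- verifying $\tilde{E}_n^2=\widetilde{X^TX}/n$ by direct $2\times 2$ multiplication over $\complex$ rather than citing a positive-definite square-root theorem, and reading $\norm{v}^2$ as the bilinear form $v^Tv$ so that $\tilde{E}_n^T\tilde{E}_n=\tilde{E}_n^2$ by symmetry --- makes explicit a convention the paper leaves implicit, and is exactly what is needed for the rewriting to remain an identity without assuming the perturbed Gram matrix is positive semidefinite.
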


\begin{proof}[Proof of Lemma~\ref{lem:dpfequiv}]

\begin{align*}
(\tilde\beta-\tilde\beta^N)^T\widetilde{X_n^TX_n}(\tilde\beta-\tilde\beta^N)
&= \sqrt{n}(\tilde\beta - \tilde\beta^N)^T\tilde{E}_n^T\sqrt{n}\tilde{E}_n(\tilde\beta - \tilde\beta^N) \\
&= \norm{\sqrt{n}\tilde{E}_n(\tilde\beta - \tilde\beta^N)}^2.
\end{align*}

Next,
\begin{align*}
\widetilde{Y_n^TY_n}-2\tilde{\beta}^T\widetilde{X_n^TY_n}+\tilde{\beta}^T\widetilde{X_n^TX_n}\tilde\beta
&= n\tilde{G}_n - 2n\tilde\beta^T\tilde{F}_n + n\tilde\beta^T\hat{E}_n^2\tilde\beta \\
&= n(\tilde\beta^T\tilde{E}_n^2\tilde\beta - 2\tilde\beta^T\tilde{F}_n + \tilde{G}_n).
\end{align*}

\end{proof}

We now introduce two helper lemmas 
that are useful for showing later results.
The first uses a hybrid-type argument to show
a $1/f(n)$ rate of convergence of a ratio
of random variables.
The second can be used to show that if the difference of
two random variables converge to 0, then
as long as they converge to a non-zero constant,
the difference of their square root converge to 0.

\begin{lemma}

Let $A_n, B_n, \widetilde{A}_n, \widetilde{B}_n$ be random variables such that:
\begin{enumerate}
\item For constants $c_1, c_2\in\reals$, 
$c_2\neq 0$:\quad $A_n\xrightarrow{P} c_1$, $B_n\xrightarrow{P} c_2$,
\item For function $f(n)$:\quad
$f(n)(\widetilde{A}_n - A_n)\xrightarrow{P} 0$ and
$f(n)(\widetilde{B}_n - B_n)\xrightarrow{P} 0$.
\end{enumerate}

Then:
$$
f(n)\left(\frac{\widetilde{A}_n}{\widetilde{B}_n} - \frac{A_n}{B_n}\right)\xrightarrow{P}0.
$$

\label{lem:hybrid}
\end{lemma}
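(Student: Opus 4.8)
The plan is to use the hybrid decomposition hinted at in the lemma's preamble, changing one numerator/denominator pair at a time. Concretely, I would start from
$$
\frac{\widetilde{A}_n}{\widetilde{B}_n} - \frac{A_n}{B_n} = \frac{\widetilde{A}_n - A_n}{\widetilde{B}_n} + A_n\left(\frac{1}{\widetilde{B}_n} - \frac{1}{B_n}\right) = \frac{\widetilde{A}_n - A_n}{\widetilde{B}_n} - \frac{A_n\,(\widetilde{B}_n - B_n)}{\widetilde{B}_n B_n},
$$
multiply through by $f(n)$, and prove that each of the two resulting terms tends to $0$ in probability; the claim then follows by adding the two limits.

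Before treating the terms I would record the auxiliary fact $\widetilde{B}_n \xrightarrow{P} c_2$, which is where I expect the only genuine subtlety to sit. Hypothesis~(2) gives $f(n)(\widetilde{B}_n - B_n)\xrightarrow{P}0$, and since $f(n)$ is eventually bounded away from $0$ (in the intended application $f(n)=\sqrt{n}\to\infty$) this upgrades to $\widetilde{B}_n - B_n \xrightarrow{P} 0$; combined with $B_n \xrightarrow{P} c_2$ from hypothesis~(1) it yields $\widetilde{B}_n \xrightarrow{P} c_2 \neq 0$. The point of this step is that $c_2\neq 0$, together with $\widetilde{B}_n\xrightarrow{P}c_2$, is exactly what makes dividing by $\widetilde{B}_n$ and by $\widetilde{B}_n B_n$ legitimate in the limit; without it the ratios need not be well behaved, so this (rather than any single calculation) is the main obstacle.

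With the denominators under control, each term is dispatched by Slutsky's Theorem (Theorem~\ref{thm:slutsky}) and Lemma~\ref{lem:helperprob}. For the first term, $f(n)(\widetilde{A}_n - A_n)\xrightarrow{P}0$, hence $\xrightarrow{D}0$, while $\widetilde{B}_n\xrightarrow{P}c_2\neq 0$, so the quotient part of Slutsky gives $f(n)(\widetilde{A}_n - A_n)/\widetilde{B}_n \xrightarrow{D} 0/c_2 = 0$, and convergence in distribution to a constant is convergence in probability. For the second term I would first combine $A_n\xrightarrow{P}c_1$ with $f(n)(\widetilde{B}_n - B_n)\xrightarrow{P}0$ via the product part of Slutsky to get $A_n\, f(n)(\widetilde{B}_n - B_n)\xrightarrow{P}0$, and note $\widetilde{B}_n B_n \xrightarrow{P} c_2^2 \neq 0$, so the quotient tends to $0$ in probability as well. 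Summing the two vanishing pieces gives $f(n)\bigl(\widetilde{A}_n/\widetilde{B}_n - A_n/B_n\bigr)\xrightarrow{P}0$, completing the proof; everything after the decomposition and the auxiliary convergence is routine bookkeeping with Slutsky.
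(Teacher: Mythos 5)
Your proof is correct and follows essentially the same hybrid-plus-Slutsky argument as the paper, differing only in the cosmetic choice of intermediate term ($A_n/\widetilde{B}_n$ rather than the paper's $\widetilde{A}_n/B_n$). Your explicit observation that upgrading $f(n)(\widetilde{B}_n - B_n)\xrightarrow{P}0$ to $\widetilde{B}_n\xrightarrow{P}c_2$ requires $f(n)$ to be eventually bounded away from zero is a point the paper uses implicitly without comment, and it is genuinely needed for the lemma to hold as stated.
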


\begin{proof}[Proof of Lemma~\ref{lem:hybrid}]
We use a hybrid-type argument. We write
$$
f(n)\left(\frac{\widetilde{A}_n}{\widetilde{B}_n} - \frac{A_n}{B_n}\right)
= f(n)\left(\frac{\widetilde{A}_n}{\widetilde{B}_n} - \frac{\widetilde{A}_n}{B_n} + \frac{\widetilde{A}_n}{B_n} - \frac{A_n}{B_n}\right).
$$

Then,
\begin{align*}
f(n)\left(\frac{\widetilde{A}_n}{\widetilde{B}_n} - \frac{\widetilde{A}_n}{B_n}\right)
&= f(n)\left(\frac{\widetilde{A}_n B_n - \widetilde{A}_n\widetilde{B}_n}{\widetilde{B}_n B_n}\right) \\
&= \widetilde{A}_n f(n)\left(\frac{B_n - \widetilde{B}_n}{\widetilde{B}_nB_n}\right) \\
&\xrightarrow{P} 0,
\end{align*}
since $\widetilde{A}_n\xrightarrow{P} c_1$, $f(n)(B_n - \widetilde{B}_n)\xrightarrow{P}0$,
and by Slutsky's Theorem 
$B_n\widetilde{B}_n\xrightarrow{P}c_2^2 \neq 0$.

Also,
$$
f(n)\left(\frac{\widetilde{A}_n}{B_n} - \frac{A_n}{B_n}\right) 
= f(n)\left(\frac{\widetilde{A}_n - A_n}{B_n}\right)
\xrightarrow{P} 0,
$$
since $f(n)(\widetilde{A}_n - A_n)\xrightarrow{P}0$,
$B_n\xrightarrow{P}c_2\neq 0$ so that the result follows by a routine
application of Slutsky's Theorem.

As a result,
$f(n)\left(\frac{\widetilde{A}_n}{\widetilde{B}_n} - \frac{A_n}{B_n}\right)\xrightarrow{P}0$.

\end{proof}

\begin{lemma}

Let $A_n,\widetilde{A}_n$ 
be random variables such that:
\begin{enumerate}
\item For constant $c\in\reals$, 
$c\neq 0$:\quad $A_n\xrightarrow{P} c$,
\item For function $f(n)$:\quad
$f(n)(\widetilde{A}_n - A_n)\xrightarrow{P} 0$.
\end{enumerate}

Then:
$$
f(n)(\widetilde{A}_n^{1/2} - A_n^{1/2})\xrightarrow{P} 0.
$$

\label{lem:squareroot}
\end{lemma}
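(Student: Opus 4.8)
The plan is to reduce the claim to the elementary identity
$$
f(n)\left(\widetilde{A}_n^{1/2} - A_n^{1/2}\right) = \frac{f(n)\left(\widetilde{A}_n - A_n\right)}{\widetilde{A}_n^{1/2} + A_n^{1/2}},
$$
which is valid whenever the denominator is nonzero and follows from rationalizing $\widetilde{A}_n^{1/2}-A_n^{1/2}$. By Condition~2 the numerator converges in probability to $0$, so the whole task is to show that the denominator converges in probability to a nonzero constant; the result then drops out of Slutsky's Theorem.

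First I would upgrade Condition~2 to a statement about $\widetilde{A}_n$ itself. Writing $\widetilde{A}_n - A_n = \tfrac{1}{f(n)}\cdot f(n)(\widetilde{A}_n - A_n)$ and using that $f(n)$ is bounded away from $0$ for large $n$ (in our application $f(n)=\sqrt{n}\to\infty$, so $1/f(n)$ is a bounded deterministic sequence tending to $0$), the product of the vanishing factor $1/f(n)$ with $f(n)(\widetilde{A}_n - A_n)\xrightarrow{P}0$ gives $\widetilde{A}_n - A_n\xrightarrow{P}0$. Combined with $A_n\xrightarrow{P}c$ from Condition~1, this yields $\widetilde{A}_n\xrightarrow{P}c$.

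Next, because $c\neq 0$---and in the regime of interest $c>0$, since in the applications $A_n$ is a (limiting) variance, so that $x\mapsto\sqrt{x}$ is continuous on a neighborhood of $c$---the continuous mapping theorem gives $A_n^{1/2}\xrightarrow{P}\sqrt{c}$ and $\widetilde{A}_n^{1/2}\xrightarrow{P}\sqrt{c}$. Hence the denominator satisfies $\widetilde{A}_n^{1/2} + A_n^{1/2}\xrightarrow{P}2\sqrt{c}\neq 0$. The numerator $f(n)(\widetilde{A}_n - A_n)$ converges in probability, and hence in distribution (Lemma~\ref{lem:helperprob}(2)), to the constant $0$, so Slutsky's Theorem (Theorem~\ref{thm:slutsky}, part~3) applied to the ratio yields $f(n)(\widetilde{A}_n^{1/2}-A_n^{1/2})\xrightarrow{D}0/(2\sqrt{c})=0$. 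Convergence in distribution to a constant is convergence in probability (Lemma~\ref{lem:helperprob}(3)), which is the desired conclusion.

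The main obstacle is the bookkeeping around the nonzero-limit hypothesis: I must guarantee the denominator stays bounded away from $0$, which is exactly where $c\neq 0$ is needed and why I first promote $A_n\xrightarrow{P}c$ to $\widetilde{A}_n\xrightarrow{P}c$ (so that both square roots---not just $A_n^{1/2}$---share a common positive limit). I would also flag the implicit assumption that $f(n)$ does not vanish, and that $c>0$ in the intended application so that the real branch of the square root is continuous at $c$ and no complex-branch subtleties enter; everything else is a routine composition of the continuous mapping theorem and Slutsky's Theorem.
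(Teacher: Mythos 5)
Your proof is correct and follows essentially the same route as the paper's: rationalize via $a^{1/2}-b^{1/2}=(a-b)/(a^{1/2}+b^{1/2})$ and apply Slutsky's Theorem to the resulting ratio. You additionally spell out the step the paper leaves implicit (upgrading $f(n)(\widetilde{A}_n-A_n)\xrightarrow{P}0$ to $\widetilde{A}_n\xrightarrow{P}c$, which needs $1/f(n)$ bounded) and correctly flag that hidden assumption, but the argument is the same.
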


\begin{proof}[Proof of Lemma~\ref{lem:squareroot}]

Throughout,
we take square roots in which both the real and imaginary
parts are non-negative.

Recall that by difference of two squares:
$$
a^{1/2}-b^{1/2} = \frac{a-b}{a^{1/2}+b^{1/2}},
$$
for any $a, b\in\complex$.

Then by Slutsky's Theorem:
\begin{align*}
f(n)(\widetilde{A}_n^{1/2} - A_n^{1/2})
&= \frac{f(n)(\widetilde{A}_n - A_n)}{\widetilde{A}_n^{1/2} + A_n^{1/2}} \\
&\xrightarrow{P} 0,
\end{align*}
where 
$\widetilde{A}_n^{1/2}, A_n^{1/2}\xrightarrow{P} c^{1/2}$.

\end{proof}

We will show that
the DP regression coefficients converge to the true coefficients.
i.e., $\tilde\beta\xrightarrow{P}\beta$.
But we begin with
showing convergence of the constituent
DP sufficient statistics.

\begin{lemma}

For every sequence of clipping bounds
$\Delta = \Delta_n > 0$ and
sequence of privacy parameters
$\rho = \rho_n > 0$,
in Algorithm~\ref{alg:t1}, under the conditions of
Theorem~\ref{thm:dpf}:
\begin{enumerate}
\item[(1)] $\sqrt{n}|\tilde{\bar{x}}-\bar{x}|\xrightarrow{P}0$,
$\tilde{\bar{x}}\xrightarrow{P} c_x$,
\item[(2)] $\sqrt{n}|\tilde{\bar{y}}-\bar{y}|\xrightarrow{P}0$,
$\tilde{\bar{y}}\xrightarrow{P} c_y$,
\item[(3)] $\sqrt{n}|\widetilde{\XXm}-\XXm|\xrightarrow{P}0$,
$\widetilde{\XXm}\xrightarrow{P} c_{x^2}$,
\item[(4)] $\sqrt{n}|\widetilde{\XYm}-\XYm|\xrightarrow{P}0$,
$\widetilde{\XYm}\xrightarrow{P} c_{xy}$,
\item[(5)] $\sqrt{n}|\tilde{\bar{x}}^2-\bar{x}^2|\xrightarrow{P}0$,
\item[(6)] $\sqrt{n}|\tilde{\bar{x}}\tilde{\bar{y}}-\bar{x}\cdot\bar{y}|\xrightarrow{P}0$,
\item[(7)] $\exists c_a\in\reals$, $\widetilde{\XYm}-\tilde{\bar{x}}\tilde{\bar{y}}\xrightarrow{P} c_a$,
\item[(8)] $\exists c_b\neq 0$, $\widetilde{\sigma^2_x}\xrightarrow{P} c_b$,
\item[(9)] $\sqrt{n}(\widetilde{\sigma^2_x} - \widehat{\sigma^2_x})\xrightarrow{P} 0$,
\item[(10)] $\exists C^{1/2}\in\reals^{2\times 2}$ such that
$\sqrt{n}(\tilde{E}_n-\hat{E}_n)\xrightarrow{P}0$, $\tilde{E}_n\xrightarrow{P} C^{1/2}$,
\item[(11)] $\tilde{F}_n\xrightarrow{P} (c_y\quad c_{xy})^T$,
\item[(12)] $\exists c_{y^2}\in\reals$ such that
$\tilde{G}_n\xrightarrow{P} c_{y^2}$,
\end{enumerate}
where the constant scalars and
matrix $c_x, c_y, c_{x^2}, c_{xy}, c_{y^2}, c_a, c_b, C^{1/2}$ are the same
as the ones defined in Lemma~\ref{lem:nonprivateconv}.

\label{lem:t1conv1}
\end{lemma}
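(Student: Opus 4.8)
The plan is to treat the twelve claims in three tiers, reflecting how they build on one another. The unifying observation is that every DP statistic in Algorithm~\ref{alg:t1} has the form (clipped empirical statistic) $+$ (Gaussian noise), so each first-tier item reduces to (a) showing the clipping is inert and (b) controlling the added noise. For items (1)--(4) I would argue as follows. For the $x$-statistics $\tilde{\bar{x}}$ and $\widetilde{\XXm}$, Condition~4 of Theorem~\ref{thm:dpf} guarantees $x_i\in[-\Delta_n,\Delta_n]$ for every $i$, so each per-coordinate clip is a no-op and the clipped quantity equals $\bar{x}$ (resp.\ $\XXm$) exactly. For the $y$-statistics $\tilde{\bar{y}}$ and $\widetilde{\XYm}$, I would condition on the event $E_n=\{\forall i,\ y_i\in[-\Delta_n,\Delta_n]\}$, which by Condition~4 satisfies $\pr[E_n]\to 1$; on $E_n$ (together with $x_i\in[-\Delta_n,\Delta_n]$, so $|x_iy_i|\le\Delta_n^2$) the clips are again inert, and since $\pr[E_n^c]\to 0$ this event can be discarded in every convergence-in-probability statement. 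It then remains to handle the Gaussian noise: for each statistic the noise has variance of the form $\Theta(\Delta_n^2/(\rho_n n^2))$ or $\Theta(\Delta_n^4/(\rho_n n^2))$, so after multiplying the deviation by $\sqrt{n}$ the variance becomes $\Theta(\Delta_n^2/(\rho_n n))$ or $\Theta(\Delta_n^4/(\rho_n n))$, which vanishes by Condition~3. Corollary~\ref{cor:gausstail} then yields $\sqrt{n}(\widetilde{(\cdot)}-\widehat{(\cdot)})\xrightarrow{P}0$, the claimed rate; combining with the deterministic limits $\bar{x}\to c_x$, $\XXm\to c_{x^2}$ and the non-private limits $\bar{y}\xrightarrow{P}c_y$, $\XYm\xrightarrow{P}c_{xy}$ from Lemma~\ref{lem:nonprivateconv} via Slutsky's Theorem gives the convergence-in-probability conclusions.

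The second tier, items (5)--(9), are purely algebraic consequences of the first tier via additive/multiplicative splitting and Slutsky's Theorem. For (5) I would write $\tilde{\bar{x}}^2-\bar{x}^2=(\tilde{\bar{x}}-\bar{x})(\tilde{\bar{x}}+\bar{x})$, so $\sqrt{n}(\tilde{\bar{x}}^2-\bar{x}^2)$ is the product of $\sqrt{n}(\tilde{\bar{x}}-\bar{x})\xrightarrow{P}0$ and $\tilde{\bar{x}}+\bar{x}\xrightarrow{P}2c_x$; (6) follows from the cross-term split $\tilde{\bar{x}}\tilde{\bar{y}}-\bar{x}\bar{y}=(\tilde{\bar{x}}-\bar{x})\tilde{\bar{y}}+\bar{x}(\tilde{\bar{y}}-\bar{y})$. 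Items (7) and (8) are limits obtained by subtracting convergent quantities, namely $\widetilde{\XYm}-\tilde{\bar{x}}\tilde{\bar{y}}\xrightarrow{P}c_{xy}-c_xc_y=c_a$ and $\widetilde{\sigma^2_x}=\widetilde{\XXm}-\tilde{\bar{x}}^2\xrightarrow{P}c_{x^2}-c_x^2=c_b\neq0$, where $c_b\neq0$ is exactly Condition~1; and (9) is the rate statement $\sqrt{n}(\widetilde{\sigma^2_x}-\widehat{\sigma^2_x})=\sqrt{n}(\widetilde{\XXm}-\XXm)-\sqrt{n}(\tilde{\bar{x}}^2-\bar{x}^2)\xrightarrow{P}0$ from (3) and (5).

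The main obstacle is item (10), the matrix square root $\tilde{E}_n$, because each of its entries is a ratio whose numerator and denominator both contain a nested square root of $\widetilde{\sigma^2_x}$; this is where the two helper lemmas are designed to be used. First I would establish $\sqrt{n}(\sqrt{\widetilde{\sigma^2_x}}-\sqrt{\widehat{\sigma^2_x}})\xrightarrow{P}0$ by applying Lemma~\ref{lem:squareroot} with $A_n=\widehat{\sigma^2_x}$, $\widetilde{A}_n=\widetilde{\sigma^2_x}$, $c=c_b\neq0$, and $f(n)=\sqrt{n}$, whose hypotheses are exactly (8) and (9). The same lemma, applied to the radicand $\widetilde{\XXm}+1+2\sqrt{\widetilde{\sigma^2_x}}$ (which converges to $c_{x^2}+1+2\sqrt{c_b}\neq0$ and differs from its non-private analogue at rate $\sqrt{n}$ by (3) and the previous step), controls the denominator $\sqrt{\widetilde{\XXm}+1+2\sqrt{\widetilde{\sigma^2_x}}}$ at rate $\sqrt{n}$. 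With the numerators ($1+\sqrt{\widetilde{\sigma^2_x}}$, $\tilde{\bar{x}}$, $\widetilde{\XXm}+\sqrt{\widetilde{\sigma^2_x}}$) and the common denominator now both shown to converge and to differ from their non-private counterparts at rate $\sqrt{n}$, I would apply Lemma~\ref{lem:hybrid} entrywise (each entry being a numerator/denominator ratio whose non-private denominator limit is nonzero) to conclude $\sqrt{n}(\tilde{E}_n-\hat{E}_n)\xrightarrow{P}0$.

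Finally $\tilde{E}_n\xrightarrow{P}C^{1/2}$ follows since $\tilde{E}_n-\hat{E}_n=\frac{1}{\sqrt{n}}\cdot\sqrt{n}(\tilde{E}_n-\hat{E}_n)\xrightarrow{P}0$ by Slutsky's Theorem and $\hat{E}_n\to C^{1/2}$ from Lemma~\ref{lem:nonprivateconv}(4). Items (11) and (12) then close the list: (11) is immediate from (2) and (4) read coordinatewise, and (12) follows because $\YYm\xrightarrow{P}c_{y^2}$ by Lemma~\ref{lem:nonprivateconv}(6) while the $\widetilde{\YYm}$ noise variance $\Delta_n^4/(2\rho_n n^2)$ vanishes (without even needing the $\sqrt{n}$ scaling) by Condition~3, on the clipping-free event $E_n$. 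I expect the bookkeeping in (10)---tracking the rate through both the inner and outer square roots before invoking the ratio lemma---to be the only delicate part; everything else is a disciplined repetition of the ``no clipping on $E_n$ plus vanishing noise variance'' template followed by Slutsky.
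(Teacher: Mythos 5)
Your proposal is correct and follows essentially the same route as the paper's proof: decompose each DP statistic into (clipped statistic) $+$ (Gaussian noise), use Condition~4 to make the clipping inert (exactly for $x$, on a probability-$1-o(1)$ event for $y$), use Condition~3 with Corollary~\ref{cor:gausstail} to kill the $\sqrt{n}$-scaled noise, and then assemble items (5)--(12) via Slutsky, Lemma~\ref{lem:squareroot} (applied twice, to $\widetilde{\sigma^2_x}$ and to the radicand), and Lemma~\ref{lem:hybrid} entrywise for $\tilde{E}_n$. The only cosmetic difference is your cross-term split in item (6), which is algebraically identical to the paper's since $\tilde{\bar{x}}-\bar{x}=N_1$ there.
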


\begin{proof}[Proof of Lemma~\ref{lem:t1conv1}]

Define 
\begin{enumerate}
\item $\breve{\bar{x}} = \frac{1}{n}\sum_{i=1}^nx_i|^{\Delta_n}_{-\Delta_n}$,
\item $\breve{\bar{y}} = \frac{1}{n}\sum_{i=1}^ny_i|^{\Delta_n}_{-\Delta_n}$,
\item $\widebreve{\XXm} = \frac{1}{n}\sum_{i=1}^nx_i^2|^{\Delta_n^2}_{0}$,
\item $\widebreve{\XYm} = \frac{1}{n}\sum_{i=1}^nx_iy_i|^{\Delta_n^2}_{-\Delta_n^2}$,
and
\item $\widebreve{\YYm} = \sum_{i=1}^ny_i^2|^{\Delta_n^2}_{0}$.
\end{enumerate}

Then
$\tilde{\bar{x}} = \breve{\bar{x}} + N_1$,
$\tilde{\bar{y}} = \breve{\bar{y}} + N_2$,
$\widetilde{\XXm} = \widebreve{\XXm} + N_3$,
$\widetilde{\XYm} = \widebreve{\XYm} + N_4$,
$\widetilde{\YYm} = \widebreve{\YYm} + N_5$
where $N_1, N_2\sim\calN(0,  \frac{2\Delta^2}{\rho n^2})$,
$N_3\sim\calN(0, \frac{\Delta^4}{2\rho n^2})$,
$N_4\sim\calN(0, \frac{2\Delta^4}{\rho n^2})$,
and $N_5\sim\calN(0, \frac{\Delta^4}{2\rho})$.

By conditions of Theorem~\ref{thm:dpf},
$\frac{\Delta_n^2}{\rho_n n}\rightarrow0$, 
$\frac{\Delta_n^4}{\rho_n n}\rightarrow0$ so that
by Corollary~\ref{cor:gausstail},
$\sqrt{n}|N_1|\xrightarrow{P}0$,
$\sqrt{n}|N_2|\xrightarrow{P}0$,
$\sqrt{n}|N_3|\xrightarrow{P}0$,
$\sqrt{n}|N_4|\xrightarrow{P}0$, and
$\frac{\sqrt{n}}{n}|N_5|\xrightarrow{P}0$
since
$\sqrt{n}\calN(0, \frac{2\Delta^2}{\rho n^2}) \sim
\calN(0, \frac{2\Delta^2}{\rho n})$,
$\sqrt{n}\calN(0, \frac{\Delta^4}{2\rho n^2}) \sim
\calN(0, \frac{\Delta^4}{2\rho n})$, and
$\frac{\sqrt{n}}{n}\calN(0, \frac{2\Delta^4}{\rho}) \sim
\calN(0, \frac{2\Delta^4}{\rho n})$.

(1): By assumption, for all $i\in[n]$,
$x_i\in[-\Delta_n, \Delta_n]$. Thus, $\bar{x} = \breve{\bar{x}}$ so that
$\tilde{\bar{x}} = \bar{x} + N_1$. Then,
$$\sqrt{n}|\tilde{\bar{x}}-\bar{x}| \leq \sqrt{n}|N_1| + \sqrt{n}|\breve{\bar{x}}-\bar{x}| \xrightarrow{P} 0$$ by Slutsky's Theorem.
As a corollary, $\tilde{\bar{x}}\xrightarrow{P} c_x$
by Lemma~\ref{lem:helperprob} and the assumption in
Theorem~\ref{thm:dpf} that $\bar{x}\rightarrow c_x$.

(2):
The proof that $\sqrt{n}|\bar{y} - \tilde{\bar{y}}|\xrightarrow{P} 0$ is very similar:
observe that by the assumptions of Theorem~\ref{thm:dpf}:
\begin{align}
\pr[|\bar{y} - \breve{\bar{y}}| > 0]
&\leq \pr[\exists i\in[n], y_i\notin[-\Delta_n, \Delta_n]] \\
&\rightarrow 0.
\end{align}
Thus, $\sqrt{n}|\breve{\bar{y}}-\bar{y}|\xrightarrow{P}0$.
Combining with $\sqrt{n}|N_2|\xrightarrow{P}0$, by the
triangle inequality,
$\sqrt{n}|\bar{y} - \tilde{\bar{y}}|\xrightarrow{P} 0$.
As a corollary, $\tilde{\bar{y}}\xrightarrow{P} c_y$
by Lemma~\ref{lem:helperprob} and the assumption in
Theorem~\ref{thm:dpf} that $\bar{x}\rightarrow c_x$.

(3):
To show
$\sqrt{n}|\widetilde{\XXm}-\XXm|\xrightarrow{P}0$, we
proceed in an analogous way:
using the assumption that
$\pr[\exists i\in[n], x_i\notin[-\Delta_n, \Delta_n]] = 0$,
we obtain that
$\pr[\exists i\in[n], x_i^2\notin[0, \Delta_n^2]] = 0$
so that
$\sqrt{n}|\XXm-\widebreve{\XXm}|\xrightarrow{P}0$.
Combining with $\sqrt{n}|N_3|\xrightarrow{P}0$, by the
triangle inequality,
$\sqrt{n}|\widetilde{\XXm}-\XXm|\xrightarrow{P}0$.
As a corollary, $\widetilde{\XXm}\xrightarrow{P} c_{x^2}$
by Lemma~\ref{lem:helperprob} and the assumption in
Theorem~\ref{thm:dpf} that $\XXm \rightarrow c_{x^2}$.
 
(4):
In a similar fashion, 
$\sqrt{n}|\widetilde{\XYm}-\XYm|\xrightarrow{P}0$:
using the assumptions 
$$
\pr[\exists i\in[n], x_i\notin[-\Delta_n, \Delta_n]] = 0,\quad\pr[\exists i\in[n], y_i\notin[-\Delta_n, \Delta_n]] \rightarrow 0,
$$
we have that
\begin{align*}
&\pr[\exists i\in[n], x_iy_i\notin[-\Delta_n^2, \Delta_n^2]]\\
&\leq \pr[\exists i\in[n], x_i\notin[-\Delta_n, \Delta_n]] + \pr[\exists i\in [n], y_i\notin[-\Delta_n, \Delta_n]]\\
&\rightarrow 0,
\end{align*}
so that
$\sqrt{n}|\XYm-\widebreve{\XYm}|\xrightarrow{P}0$.
Combining with $\sqrt{n}|N_4|\xrightarrow{P}0$, by the
triangle inequality,
$\sqrt{n}|\widetilde{\XYm}-\XYm|\xrightarrow{P}0$.
By Lemma~\ref{lem:nonprivateconv},
$\XYm\xrightarrow{P} c_{xy}$. Then by
Lemma~\ref{lem:helperprob},
$\widetilde{\XYm}\xrightarrow{P} c_{xy}$.

(5):
Next we show
$\sqrt{n}|\tilde{\bar{x}}^2-\bar{x}^2|\xrightarrow{P}0$:
$\sqrt{n}(\tilde{\bar{x}}^2-\bar{x}^2) = \sqrt{n}(\tilde{\bar{x}}-\bar{x})(\tilde{\bar{x}}+\bar{x})$.
Since, $\tilde{\bar{x}}, \bar{x}\xrightarrow{P} c_x$, we have
$(\tilde{\bar{x}}+\bar{x})\xrightarrow{P} 2c_x$,
$(\tilde{\bar{x}}-\bar{x})\xrightarrow{P} 0$ so that
by Slutsky's Theorem,
$\sqrt{n}|\tilde{\bar{x}}^2-\bar{x}^2|\xrightarrow{P}0$

(6):
In a similar fashion,
$\sqrt{n}|\tilde{\bar{x}}\tilde{\bar{y}}-\bar{x}\cdot\bar{y}|\xrightarrow{P}0$:
$\tilde{\bar{x}} = \breve{\bar{x}} + N_1 = 
\bar{x} + N_1$, since
$\forall i\in[n], x_i\in[-\Delta_n, \Delta_n]$.
Then,
\begin{align*}
\sqrt{n}(\tilde{\bar{x}}\tilde{\bar{y}}-\bar{x}\cdot\bar{y})
&= \sqrt{n}\left[(\bar{x} + N_1)\tilde{\bar{y}}-\bar{x}\cdot\bar{y}\right] \\
&= \sqrt{n}N_1\tilde{\bar{y}} + 
\bar{x}\sqrt{n}(\tilde{\bar{y}} - \bar{y}) \\
&\xrightarrow{P} 0,
\end{align*}
since $\sqrt{n}(\tilde{\bar{y}} - \bar{y})\xrightarrow{P} 0$,
$\tilde{\bar{y}}\xrightarrow{P} c_y$,
$\sqrt{n}N_1\xrightarrow{P}0$.

(7): Next, we show that
$\widetilde{\XYm}-\tilde{\bar{x}}\tilde{\bar{y}}\xrightarrow{P} c_a\in\reals$:
Follows by Slutsky's Theorem since
$\widetilde{\XYm}\xrightarrow{P} c_{xy}$,
$\tilde{\bar{x}}\xrightarrow{P}c_x$, and
$\tilde{\bar{y}}\xrightarrow{P}c_y$.

(8): In a similar fashion,
$\widetilde{\sigma^2_x}\xrightarrow{P} c_b$:
This follows by Slutsky's Theorem since
$\widetilde{\XXm}\xrightarrow{P} c_{x^2}$,
$\tilde{\bar{x}}\xrightarrow{P}c_x$.

(9) $\sqrt{n}(\widetilde{\sigma^2_x} - \widehat{\sigma^2_x})\xrightarrow{P} 0$ follows from
parts (3) and (5).

(10):
By Lemma~\ref{lem:squareroot},
$$
\sqrt{n}\left(\sqrt{\widetilde{\sigma^2_x}} - \sqrt{\widehat{\sigma^2_x}}\right)
\xrightarrow{P} 0,
$$
since 
$\sqrt{n}\left(\widetilde{\sigma^2_x} - \widehat{\sigma^2_x}\right)\xrightarrow{P} 0$
and 
$\widehat{\sigma^2_x}\xrightarrow{P} c_b\neq 0$ by
Lemma~\ref{lem:nonprivateconv}.

We have already established that
$\sqrt{n}(\widetilde{\XXm}-\XXm)\xrightarrow{P}0$
and 
$\sqrt{n}\left(\sqrt{\widetilde{\sigma^2_x}} - \sqrt{\widehat{\sigma^2_x}}\right)
\xrightarrow{P}0$.

As a result,
by Lemma~\ref{lem:squareroot},
$$
\sqrt{n}\left(\sqrt{\widetilde{\XXm}+1+2\sqrt{\widetilde{\sigma^2_x}}} - 
\sqrt{\XXm+1+2\sqrt{\widehat{\sigma^2_x}}}
\right) \xrightarrow{P} 0.
$$

Then since $\hat{E}_n, \tilde{E}_n$ converge to constant matrices and
$\sqrt{n}(\tilde{\bar{x}}-\bar{x})\xrightarrow{P}0$,
$\sqrt{n}(\widetilde{\XXm}-\XXm)\xrightarrow{P}0$,
$\sqrt{n}\left(\sqrt{\widetilde{\sigma^2_x}} - \sqrt{\widehat{\sigma^2_x}}\right)\xrightarrow{P}0$, it follows by Lemma~\ref{lem:hybrid}
that $\sqrt{n}(\tilde{E}_n-\hat{E}_n)\xrightarrow{P}0$.

As a corollary,
$\tilde{E}_n\xrightarrow{P} C^{1/2}$ since $\hat{E}_n\xrightarrow{P} C^{1/2}$
by Lemma~\ref{lem:nonprivateconv}.

(11): Also,
$\tilde{F}_n\xrightarrow{P} (c_y\quad c_{xy})^T$
since $\tilde{\bar{y}}\xrightarrow{P} c_y$ and
$\widetilde{\XYm}\xrightarrow{P} c_{xy}$.

(12): Finally, we show that
$\tilde{G}_n\xrightarrow{P} c_{y^2}\in\reals$:
using the assumption that
$\pr[\exists i\in[n], y_i\notin[-\Delta_n, \Delta_n]]\rightarrow 0$, we can obtain that
$\pr[\exists i\in[n], y_i^2\notin[0, \Delta_n^2]]\rightarrow 0$
so that
$\sqrt{n}|\YYm-\widebreve{\YYm}|\xrightarrow{P}0$.
Combining with $\frac{\sqrt{n}}{n}|N_5|\xrightarrow{P}0$, by the
triangle inequality,
$\sqrt{n}|\YYm - \widetilde{\YYm}|\xrightarrow{P} 0$ which
implies that
$\sqrt{n}|\tilde{G}_n-\hat{G}_n|\xrightarrow{P}0$.
Then by Lemma~\ref{lem:nonprivateconv} and
Lemma~\ref{lem:helperprob},
$\tilde{G}_n\xrightarrow{P} c_{y^2}$.

\end{proof}

Lemma~\ref{lem:t1conv1} shows that the noise added
to the non-DP estimates converges, in probability,
to 0 and that the DP estimates of the regression parameters
converge, in 
probability, to the true parameters.
Next, we will show the $1/\sqrt{n}$ convergence rates of
$\tilde{\beta}^N, \tilde{\beta}$. As a corollary, this implies
the consistency of $\tilde{\beta}^N, \tilde{\beta}$.

\begin{lemma}
For every sequence of clipping bounds
$\Delta = \Delta_n > 0$ and
sequence of privacy parameters
$\rho = \rho_n > 0$,
in Algorithm~\ref{alg:t1}, under the conditions of Theorem~\ref{thm:dpf}:
\begin{enumerate}
\item $\sqrt{n}(\tilde\beta^N - \hat\beta^N)\xrightarrow{P}0$,
\item $\sqrt{n}(\tilde\beta - \hat\beta)\xrightarrow{P}0$.
\end{enumerate}
\label{lem:t1conv2}
\end{lemma}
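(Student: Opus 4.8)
The plan is to prove the two claims separately, reducing each to the coordinatewise $\sqrt{n}$-rate convergence already established in Lemma~\ref{lem:t1conv1}, combined with the hybrid lemma (Lemma~\ref{lem:hybrid}) and Slutsky's Theorem (Theorem~\ref{thm:slutsky}).

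For the first claim, I would observe that $\tilde\beta^N = (\tilde{\bar{y}},\,0)^T$ and $\hat\beta^N = (\bar{y},\,0)^T$ agree in their second coordinate, so that $\sqrt{n}(\tilde\beta^N - \hat\beta^N)$ has only the nonzero entry $\sqrt{n}(\tilde{\bar{y}}-\bar{y})$, which tends to $0$ in probability by Lemma~\ref{lem:t1conv1}(2). This claim is therefore immediate.

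For the second claim I would treat the slope and intercept coordinates in turn. For the slope, write $\tilde\beta_1 = (\widetilde{\XYm}-\tilde{\bar{x}}\tilde{\bar{y}})/\widetilde{\sigma^2_x}$ and $\hat\beta_1 = \widehat{\sigma^2_{xy}}/\widehat{\sigma^2_x}$, and apply Lemma~\ref{lem:hybrid} with $f(n)=\sqrt{n}$, numerators $\widetilde{A}_n = \widetilde{\XYm}-\tilde{\bar{x}}\tilde{\bar{y}}$ and $A_n = \widehat{\sigma^2_{xy}}$, and denominators $\widetilde{B}_n = \widetilde{\sigma^2_x}$ and $B_n = \widehat{\sigma^2_x}$. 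The constant-limit hypotheses $A_n\xrightarrow{P}c_a$ and $B_n\xrightarrow{P}c_b\neq 0$ come from Lemma~\ref{lem:nonprivateconv}(2)--(3). The denominator rate $\sqrt{n}(\widetilde{B}_n - B_n)\xrightarrow{P}0$ is precisely Lemma~\ref{lem:t1conv1}(9), while the numerator rate follows from the decomposition $\sqrt{n}(\widetilde{A}_n - A_n) = \sqrt{n}(\widetilde{\XYm}-\XYm) - \sqrt{n}(\tilde{\bar{x}}\tilde{\bar{y}}-\bar{x}\cdot\bar{y})$ together with Lemma~\ref{lem:t1conv1}(4) and (6) and the triangle inequality. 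Lemma~\ref{lem:hybrid} then yields $\sqrt{n}(\tilde\beta_1-\hat\beta_1)\xrightarrow{P}0$.

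For the intercept I would first simplify the stated formula to $\tilde\beta_2 = \tilde{\bar{y}} - \tilde\beta_1\tilde{\bar{x}}$ (and analogously $\hat\beta_2 = \bar{y} - \hat\beta_1\bar{x}$), giving the identity
$$\sqrt{n}(\tilde\beta_2 - \hat\beta_2) = \sqrt{n}(\tilde{\bar{y}} - \bar{y}) - \tilde{\bar{x}}\,\sqrt{n}(\tilde\beta_1 - \hat\beta_1) - \hat\beta_1\,\sqrt{n}(\tilde{\bar{x}} - \bar{x}).$$
Each summand vanishes in probability by Slutsky's Theorem: the first by Lemma~\ref{lem:t1conv1}(2); the second because $\tilde{\bar{x}}\xrightarrow{P}c_x$ (Lemma~\ref{lem:t1conv1}(1)) multiplies the slope rate just established; and the third because $\hat\beta_1\xrightarrow{P}\beta_1$ (consistency from Lemma~\ref{lem:nonprivateconv}(7)) multiplies $\sqrt{n}(\tilde{\bar{x}}-\bar{x})\xrightarrow{P}0$ from Lemma~\ref{lem:t1conv1}(1). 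Assembling the two coordinates gives $\sqrt{n}(\tilde\beta - \hat\beta)\xrightarrow{P}0$. The only genuinely delicate point is the slope numerator, where the cross term $\tilde{\bar{x}}\tilde{\bar{y}}$ must be shown $\sqrt{n}$-consistent (Lemma~\ref{lem:t1conv1}(6)); everything else is routine bookkeeping enabled by Lemma~\ref{lem:t1conv1} and by the way the hybrid lemma decouples the numerator and denominator rates.
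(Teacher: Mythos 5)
Your proposal is correct and follows essentially the same route as the paper: reduce to coordinates, handle the null estimator via the $\sqrt{n}$-rate for $\tilde{\bar{y}}$, and obtain the slope rate by applying Lemma~\ref{lem:hybrid} with exactly the same choices of $\widetilde{A}_n, A_n, \widetilde{B}_n, B_n$ and $f(n)=\sqrt{n}$, fed by parts (4), (6), and (9) of Lemma~\ref{lem:t1conv1}. For the intercept the paper merely says ``by similar arguments,'' and your explicit telescoping $\sqrt{n}(\tilde\beta_2-\hat\beta_2)=\sqrt{n}(\tilde{\bar{y}}-\bar{y})-\tilde{\bar{x}}\sqrt{n}(\tilde\beta_1-\hat\beta_1)-\hat\beta_1\sqrt{n}(\tilde{\bar{x}}-\bar{x})$ is a valid and somewhat more detailed way of filling in that step.
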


\begin{proof}[Proof of Lemma~\ref{lem:t1conv2}]

As previously defined,
$$
\tilde\beta^N =
\begin{pmatrix}\tilde{\bar{y}}\\0\end{pmatrix},\quad
\hat\beta^N =
\begin{pmatrix}\bar{y}\\0\end{pmatrix}.
$$
Then,
$\sqrt{n}(\tilde\beta^N - \hat\beta^N)\xrightarrow{P}0$
by Lemma~\ref{lem:t1conv1} since
$\sqrt{n}|\tilde{\bar{y}}-\breve{\bar{y}}|\xrightarrow{P}0$.

We will
show that $\sqrt{n}(\tilde\beta - \hat\beta)\xrightarrow{P}0$.
First, to show that
$\sqrt{n}(\tilde\beta_1 - \hat\beta_1)\xrightarrow{P}0$, we apply
Lemma~\ref{lem:hybrid} with
$\widetilde{A} = \widetilde{\XYm} - \tilde{\bar{x}}\tilde{\bar{y}}$,
$A = \XYm - \bar{x}\cdot\bar{y}$,
$\widetilde{B} = \widetilde{\XXm} - \tilde{\bar{x}}^2 = \widetilde{\sigma^2_x}$,
$B = \XXm - \bar{x}^2 = \widehat{\sigma^2_x}$ and $f(n) = \sqrt{n}$.
Then $\tilde\beta_1 = \frac{\widetilde{A}}{\widetilde{B}}$ and
$\hat\beta_1 = \frac{A}{B}$.
By Lemma~\ref{lem:nonprivateconv},
$B = \widehat{\sigma^2_x}$ converges, in probability, to a non-zero constant
and
$\sqrt{n}(\widetilde{\XYm} - \XYm)$,
$\sqrt{n}\left(\bar{x}\cdot\bar{y}-\tilde{\bar{x}}\tilde{\bar{y}}\right)\xrightarrow{P}0$ by
Lemma~\ref{lem:t1conv1}.
Also,
by Lemma~\ref{lem:t1conv1} and Lemma~\ref{lem:nonprivateconv},
if we define
$\widetilde{B} = \widetilde{\sigma^2_x}, B = \widehat{\sigma^2_x}, \widetilde{A} = \widetilde{\XYm} - \tilde{\bar{x}}\tilde{\bar{y}}$, then
$\sqrt{n}(\widetilde{B}-B)\xrightarrow{P}0$ and
$\sqrt{n}(\widetilde{A}-A)\xrightarrow{P}0$.
Then by Lemma~\ref{lem:hybrid},
$\sqrt{n}(\tilde\beta_1 - \hat\beta_1)\xrightarrow{P} 0$.
By similar arguments,
$\sqrt{n}(\hat\beta_2 - \tilde\beta_2)\xrightarrow{P}0$
so that
$\sqrt{n}(\hat\beta - \tilde\beta)\xrightarrow{P}0$.

\end{proof}

Lemma~\ref{lem:t1conv2} leads to the following corollary, showing consistency of the
DP estimates of $\beta$ under the null or alternative hypothesis.

\begin{corollary}
For every sequence of clipping bounds
$\Delta = \Delta_n > 0$ and
sequence of privacy parameters
$\rho = \rho_n > 0$,
in Algorithm~\ref{alg:t1}, under the conditions of Theorem~\ref{thm:dpf}:
\begin{enumerate}

\item Under the null hypothesis:
$\tilde\beta^N \xrightarrow{P}\beta$,

\item Under the alternative hypothesis:
$\tilde\beta\xrightarrow{P}\beta$.

\end{enumerate}
\label{cor:dpbetas}
\end{corollary}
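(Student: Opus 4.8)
The plan is to deduce consistency directly from the $1/\sqrt{n}$ convergence rates established in Lemma~\ref{lem:t1conv2}, combined with the consistency of the corresponding non-private estimates. The first observation is that $\sqrt{n}$-scaled convergence to $0$ trivially implies unscaled convergence to $0$: since $\sqrt{n}(\tilde\beta - \hat\beta)\xrightarrow{P}0$ and $1/\sqrt{n}\to 0$, writing $\tilde\beta - \hat\beta = \frac{1}{\sqrt{n}}\cdot\sqrt{n}(\tilde\beta-\hat\beta)$ and applying Slutsky's Theorem (Theorem~\ref{thm:slutsky}) gives $\tilde\beta - \hat\beta\xrightarrow{P}0$, and likewise $\tilde\beta^N - \hat\beta^N\xrightarrow{P}0$.

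For the alternative hypothesis, I would combine this with the consistency of the non-private OLS estimate. Lemma~\ref{lem:nonprivateconv}(7) already gives $\hat\beta\xrightarrow{P}\beta$, so writing $\tilde\beta = (\tilde\beta - \hat\beta) + \hat\beta$ and invoking Slutsky's Theorem once more yields $\tilde\beta\xrightarrow{P}\beta$, as desired.

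For the null hypothesis the only additional step is to verify that $\hat\beta^N$ is consistent for the \emph{true} parameter $\beta$, and not merely for $\beta^N=\E[\hat\beta^N]$. Here I would use that $\hat\beta^N = (\bar{y},\,0)^T$ and that, under $H_0$, we have $\beta_1 = 0$, so $\beta = (\beta_2,\,0)^T$. By Lemma~\ref{lem:nonprivateconv}(1), $\bar{y}\xrightarrow{P}c_y = \beta_2 + \beta_1 c_x$, which collapses to $c_y = \beta_2$ precisely because $\beta_1=0$ under the null. Hence $\hat\beta^N\xrightarrow{P}(\beta_2,\,0)^T=\beta$, and combining with $\tilde\beta^N-\hat\beta^N\xrightarrow{P}0$ through Slutsky's Theorem gives $\tilde\beta^N\xrightarrow{P}\beta$.

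The step that requires the most care is the null case: it is essential to invoke the null assumption $\beta_1=0$ so that the limit $c_y$ of $\bar{y}$ coincides with the first coordinate $\beta_2$ of $\beta$. Without the null hypothesis, $\hat\beta^N$ would only be consistent for the constrained quantity $\beta^N$ rather than for $\beta$ itself, so this is where the hypothesis genuinely enters. Everything else is a mechanical application of Slutsky's Theorem to sums and products of sequences converging in probability, and no new estimates are needed beyond Lemmas~\ref{lem:nonprivateconv} and~\ref{lem:t1conv2}.
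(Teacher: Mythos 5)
Your proposal is correct and follows essentially the same route as the paper: deduce $\tilde\beta-\hat\beta\xrightarrow{P}0$ and $\tilde\beta^N-\hat\beta^N\xrightarrow{P}0$ from the $\sqrt{n}$-rates in Lemma~\ref{lem:t1conv2}, then transfer consistency from the non-private estimates via Lemma~\ref{lem:nonprivateconv}. If anything, you are more explicit than the paper on the one point that deserves care, namely that under $H_0$ the limit $c_y=\beta_2+\beta_1 c_x$ of $\bar{y}$ collapses to $\beta_2$ because $\beta_1=0$, which is exactly why $\hat\beta^N$ (and hence $\tilde\beta^N$) is consistent for $\beta$ itself rather than only for $\beta^N$.
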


\begin{proof}[Proof of Corollary~\ref{cor:dpbetas}]

By Lemma~\ref{lem:nonprivateconv},
$\hat\beta_1\xrightarrow{P}\beta_1$ and
$\hat\beta_2\xrightarrow{P}\beta_2$.
Then, using Lemma~\ref{lem:t1conv2} and
Lemma~\ref{lem:helperprob}:
$\tilde\beta_1\xrightarrow{P}\beta_1$,
$\tilde\beta_2\xrightarrow{P}\beta_2$.

Also, by Lemma~\ref{lem:nonprivateconv},
$\bar{y}\xrightarrow{P} c_y$ so that under the null hypothesis,
$\tilde\beta^N\xrightarrow{P}\beta$.

\end{proof}

\subsection{Convergence of Differentially Private $F$-Statistic}

We now introduce the continuous mapping theorem, which is especially useful for combining
individual convergence results to show, under certain conditions,
more complex convergence results.
The continuous mapping theorem can be used to map
convergent sequences into another convergent sequence via
a continuous function.

\begin{theorem}[Continuous Mapping Theorem,  see~\citep{gut2013probability}]
Let $\{W_n\}$
be a sequence of random vectors and $W$ be
a random vector taking values in the same
metric space $\calX$. Let $\calY$ be a metric space and 
$g:\calX\rightarrow\calY$ be a measurable function.

Define $D_g = \{x\,:\,g\text{ is discontinuous at }x\}$.
Suppose that 
$\pr[W\in D_g] = 0$. Then:
\begin{enumerate}
\item $W_n\xrightarrow{P}W\Rightarrow g(W_n)\xrightarrow{P} g(W)$,
\item $W_n\xrightarrow{D}W\Rightarrow g(W_n)\xrightarrow{D} g(W)$,
\item $W_n\xrightarrow{a.s.}W\Rightarrow g(W_n)\xrightarrow{a.s.} g(W)$.
\end{enumerate}

\label{thm:cmt}
\end{theorem}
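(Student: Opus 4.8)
The plan is to prove the three parts of the statement in increasing order of difficulty, reducing the harder modes of convergence to the easier ones, with the topology of $D_g$ entering only through the hypothesis $\pr[W\in D_g]=0$.

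\emph{Almost sure case.} First I would dispatch part~(3), which is essentially immediate. Consider the event $\Omega_0=\{W_n\to W\}\cap\{W\notin D_g\}$. It has probability one, since $\pr[W_n\to W]=1$ by hypothesis and $\pr[W\in D_g]=0$ by assumption, so the intersection of two probability-one events is probability one. For every outcome in $\Omega_0$, the point $W$ is a continuity point of $g$ and $W_n\to W$, so the sequential characterization of continuity gives $g(W_n)\to g(W)$. Hence $g(W_n)\xrightarrow{a.s.}g(W)$.

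\emph{In-probability case via subsequences.} Next I would deduce part~(1) from part~(3) using the subsequence principle, valid in any metric space: $X_n\xrightarrow{P}X$ if and only if every subsequence of $\{X_n\}$ has a further subsequence converging almost surely to $X$. Given $W_n\xrightarrow{P}W$, fix an arbitrary subsequence $\{W_{n_k}\}$, which still converges to $W$ in probability; the principle yields a further subsequence $\{W_{n_{k_j}}\}$ with $W_{n_{k_j}}\xrightarrow{a.s.}W$. Applying the already-proved part~(3) along this sub-subsequence gives $g(W_{n_{k_j}})\xrightarrow{a.s.}g(W)$, hence in probability. Since every subsequence of $\{g(W_n)\}$ thus admits a further subsequence converging to $g(W)$, the reverse direction of the subsequence principle gives $g(W_n)\xrightarrow{P}g(W)$.

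\emph{In-distribution case via Portmanteau.} For part~(2) I would use the closed-set form of the Portmanteau theorem: $Z_n\xrightarrow{D}Z$ iff $\limsup_n\pr[Z_n\in F]\le\pr[Z\in F]$ for every closed $F$. The key topological lemma is that for closed $F\subseteq\calY$ the preimage satisfies $\overline{g^{-1}(F)}\subseteq g^{-1}(F)\cup D_g$: if $x$ lies in the closure, choose $x_k\in g^{-1}(F)$ with $x_k\to x$; either $g$ is continuous at $x$, whence $g(x_k)\to g(x)$ and closedness of $F$ forces $g(x)\in F$, i.e. $x\in g^{-1}(F)$, or else $x\in D_g$. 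Applying Portmanteau (for $W_n\xrightarrow{D}W$) to the closed set $\overline{g^{-1}(F)}$ then gives
\begin{align*}
\limsup_n\pr[g(W_n)\in F]
&=\limsup_n\pr[W_n\in g^{-1}(F)]\\
&\le\limsup_n\pr\big[W_n\in\overline{g^{-1}(F)}\big]\\
&\le\pr\big[W\in\overline{g^{-1}(F)}\big]\\
&\le\pr[W\in g^{-1}(F)]+\pr[W\in D_g]\\
&=\pr[g(W)\in F],
\end{align*}
where the last step uses $\pr[W\in D_g]=0$. By the closed-set characterization this is exactly $g(W_n)\xrightarrow{D}g(W)$.

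The main obstacle is part~(2): the other two cases are routine once the almost-sure case is in hand, but the distributional case requires both the correct invocation of Portmanteau and the closure inclusion $\overline{g^{-1}(F)}\subseteq g^{-1}(F)\cup D_g$, which is where the measurability and discontinuity set of $g$ genuinely interact. An equivalent route I might take instead is through bounded continuous test functions: for bounded continuous $f:\calY\to\reals$ the composition $f\circ g$ is bounded, measurable, and continuous off $D_g$, so $\pr[W\in D_{f\circ g}]\le\pr[W\in D_g]=0$, and an extended-Portmanteau statement on convergence of expectations of bounded functions that are almost-surely continuous with respect to the limit law yields $\E[f(g(W_n))]\to\E[f(g(W))]$.
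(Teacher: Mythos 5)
Your proposal is correct in all three parts, but there is nothing in the paper to compare it against: the paper states the continuous mapping theorem as a known result and imports it by citation to \citep{gut2013probability}, offering no proof of its own. Your argument is the standard textbook proof of this theorem (it is essentially the proof of Theorem 2.3 in \citep{vaart_1998}, and the treatment in the cited reference follows the same lines): the almost-sure case by intersecting the probability-one events $\{W_n\to W\}$ and $\{W\notin D_g\}$; the in-probability case by the a.s.-subsequence characterization of convergence in probability, which is indeed valid for metric-space-valued random elements; and the distributional case by the closed-set Portmanteau criterion combined with the closure inclusion $\overline{g^{-1}(F)}\subseteq g^{-1}(F)\cup D_g$, with the hypothesis $\pr[W\in D_g]=0$ absorbing the discontinuity set in the final inequality. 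Each step checks out, including the chain of inequalities in the Portmanteau computation and your correct identification of part (2) as the only step where the discontinuity set genuinely interacts with the topology. One pedantic point worth flagging: the almost-sure argument implicitly uses that $\{W\in D_g\}$ is an event, which is automatic here because the set of continuity points of any function between metric spaces is a $G_\delta$, so $D_g$ is Borel (an $F_\sigma$); your alternative route via bounded continuous test functions, noting $D_{f\circ g}\subseteq D_g$, sidesteps even this and is equally standard.
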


We now state and prove a helper lemma that will be useful for showing that the
numerators and denominators of the DP $F$-statistic converge to the right
distribution.

\begin{lemma}

Let $A_n, B_n$ be random vectors such that
there exists distribution $L$ where:
\begin{enumerate}
\item $A_n - B_n \xrightarrow{P} 0$,
\item $\norm{B_n} \xrightarrow{D} L$ such that $\pr[\norm{B_n} = 0] = 0$.
\end{enumerate}

Then,
$$
\norm{A_n}^2 \xrightarrow{D} L^2.
$$

\label{lem:squareddist}
\end{lemma}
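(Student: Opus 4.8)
The plan is to collapse the vector-valued hypotheses down to the single scalar sequence $\norm{A_n}$, establish its convergence in distribution to $L$, and then finish with the squaring map through the continuous mapping theorem. The whole argument is a chaining of three tools already available: the reverse triangle inequality, Lemma~\ref{lem:helperprob}, and Theorem~\ref{thm:cmt}.

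First I would transfer the hypothesis $A_n - B_n\xrightarrow{P}0$ from the vector level to the scalar norms. Because $\norm{\cdot}$ is continuous, Theorem~\ref{thm:cmt}(1) gives $\norm{A_n - B_n}\xrightarrow{P}\norm{0}=0$. The reverse triangle inequality then yields, pointwise, $\left|\,\norm{A_n}-\norm{B_n}\,\right|\le\norm{A_n-B_n}$, so that $\norm{A_n}-\norm{B_n}\xrightarrow{P}0$. Next I would upgrade this to convergence in distribution of $\norm{A_n}$ itself: applying Lemma~\ref{lem:helperprob}(1) with $X_n=\norm{B_n}$, $Y_n=\norm{A_n}$, and limiting variable distributed as $L$, the two facts $\norm{B_n}\xrightarrow{D}L$ (hypothesis~2) and $\norm{B_n}-\norm{A_n}\xrightarrow{P}0$ (just shown) combine to give $\norm{A_n}\xrightarrow{D}L$.

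Finally I would apply the continuous mapping theorem once more, now with $g(t)=t^2$. Since squaring is continuous on all of $\reals$, its discontinuity set $D_g$ is empty, so the hypothesis $\pr[L\in D_g]=0$ of Theorem~\ref{thm:cmt} is satisfied trivially, and part~(2) delivers $\norm{A_n}^2\xrightarrow{D}L^2$, as claimed.

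I do not expect a serious technical obstacle here; the only real subtlety is the first step, namely correctly passing from convergence in probability of the \emph{vector} difference $A_n-B_n$ to convergence in probability of the \emph{scalar} difference $\norm{A_n}-\norm{B_n}$ via the reverse triangle inequality, after which everything is a mechanical application of the stated lemmas. It is worth flagging that the extra assumption $\pr[\norm{B_n}=0]=0$ is not actually needed for this lemma, since the squaring map is continuous everywhere; that hypothesis is only pertinent when the lemma is later fed into a ratio (where a nonzero denominator is required for the continuous mapping / Slutsky step).
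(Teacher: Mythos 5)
Your proof is correct, but it takes a genuinely different and in fact shorter route than the paper's. The paper works at the level of squared norms: it expands $\norm{A_n}^2 - \norm{B_n}^2$ via the inner product, shows $\brackets{A_n,B_n} - \norm{B_n}^2 \xrightarrow{P} 0$ by pairing $\norm{B_n}$ with the unit vector $B_n/\norm{B_n}$ (this is precisely where the hypothesis $\pr[\norm{B_n}=0]=0$ is consumed), separately shows $\norm{A_n - B_n}^2 \xrightarrow{P} 0$, and adds the two relations to conclude $\norm{A_n}^2 - \norm{B_n}^2 \xrightarrow{P} 0$ before invoking Lemma~\ref{lem:helperprob}. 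You instead work at the level of the norms themselves: the reverse triangle inequality gives $\left|\norm{A_n} - \norm{B_n}\right| \leq \norm{A_n - B_n} \xrightarrow{P} 0$ directly, after which Lemma~\ref{lem:helperprob}(1) yields $\norm{A_n}\xrightarrow{D}L$ and Theorem~\ref{thm:cmt} with $g(t)=t^2$ finishes. Your argument avoids any division by $\norm{B_n}$, and your observation that the hypothesis $\pr[\norm{B_n}=0]=0$ is therefore not needed for the stated conclusion is accurate --- it is an artifact of the paper's proof technique (and of the later use of the lemma inside a ratio), not a genuine requirement of this lemma. Both proofs are valid; yours is the more economical of the two.
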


\begin{proof}[Proof of Lemma~\ref{lem:squareddist}]

Consider the unit vector
$\frac{B_n}{\norm{B_n}}$. 
Since
$A_n - B_n \xrightarrow{P} 0$, we have that by definition of convergence in
probability:
$$
\frac{B_n}{\norm{B_n}}(A_n - B_n) \xrightarrow{P} 0,
$$
where $\norm{B_n}$ is almost surely never 0. 

First, let $W_n = (\norm{B_n}, \norm{B_n})$.
Then since $\norm{B_n} \xrightarrow{D} L$, by the
continuous mapping theorem (Theorem~\ref{thm:cmt}), if
$g((x, y)) = x\cdot y$, then
$g(W_n) = \norm{B_n}^2 \xrightarrow{D} L^2$.

Then, let $W_n = (\norm{B_n}, \frac{B_n}{\norm{B_n}}(A_n - B_n))$.
Then since $\pr[\norm{B_n} = 0] = 0$, by the
continuous mapping theorem (Theorem~\ref{thm:cmt}), if $g((x, y)) = x\cdot y$, then
$g(W_n) = B_n\cdot(A_n - B_n) \xrightarrow{D} 0$ which implies that
$$
\brackets{A_n, B_n} - \norm{B_n}^2 = B_n\cdot(A_n - B_n)\xrightarrow{P} 0,
$$
so that
\begin{equation}
2(\brackets{A_n, B_n} - \norm{B_n}^2) \xrightarrow{P} 0.
\label{eq:aminusb}
\end{equation}

Also, by the continuous mapping theorem,
\begin{align}
&\norm{A_n}^2 - 2\brackets{A_n, B_n} + \norm{B_n}^2\label{eq:absq} \\
&= \norm{A_n - B_n}^2 \\
&\xrightarrow{P} 0.
\end{align}

Adding Equations~(\ref{eq:aminusb}) and~(\ref{eq:absq}) results in the following:
$\norm{A_n}^2 - \norm{B_n}^2\xrightarrow{P} 0$. Then by
Lemma~\ref{lem:helperprob}, since $\norm{B_n}^2 \xrightarrow{D} L^2$, we have that
$\norm{A_n}^2 \xrightarrow{D} L^2$.

\end{proof}

We now show that the main terms in the numerators and denominators of
the DP $F$-statistic converge to the asymptotic distribution of their
non-private counterparts.

\begin{lemma}
Let $\sigma_e > 0$, $r=p=2, q=1$, 
and $\beta\in\reals^p$. For every $n\in\naturals$ with
$n > r$, let
$X_n\in\reals^{n\times p}$ be the
design matrix.
For every sequence of clipping bounds
$\Delta = \Delta_n > 0$ and
sequence of privacy parameters
$\rho = \rho_n > 0$, in Algorithm~\ref{alg:t1}, under the conditions of
Theorem~\ref{thm:dpf}:
$$
\frac{n(\tilde{\beta}^T\tilde{E}_n^2\tilde{\beta} - 2\tilde{\beta}^T\tilde{F}_n + \tilde{G}_n)}{n-r}\xrightarrow{P}\sigma_e^2,\quad
\norm{\sqrt{n}\tilde{E}_n(\tilde\beta - \tilde{\beta}^N)}^2\xrightarrow{D}\calX^2_{r-q}(\eta^2)\sigma_e^2
.
$$
\label{lem:convbeta}
\end{lemma}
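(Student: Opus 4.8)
The plan is to handle the two claims separately, in each case comparing the DP quantity to its non-private counterpart whose limiting behavior is already pinned down by Theorem~\ref{thm:f}. For the denominator, I would first recall from Lemma~\ref{lem:fequiv} that $n(\hat\beta^T\hat{E}_n^2\hat\beta - 2\hat\beta^T\hat{F}_n + \hat{G}_n) = \norm{Y_n - X_n\hat\beta}^2$, so Theorem~\ref{thm:f}(3) together with $n/(n-r)\to 1$ and Slutsky's Theorem gives $\hat\beta^T\hat{E}_n^2\hat\beta - 2\hat\beta^T\hat{F}_n + \hat{G}_n\xrightarrow{P}\sigma_e^2$. Next I would invoke the Continuous Mapping Theorem (Theorem~\ref{thm:cmt}) on the continuous map $(E, F, G, b)\mapsto b^TE^2b - 2b^TF + G$. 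By Lemma~\ref{lem:t1conv1} we have $\tilde{E}_n\xrightarrow{P}C^{1/2}$, $\tilde{F}_n\xrightarrow{P}(c_y, c_{xy})^T$, $\tilde{G}_n\xrightarrow{P}c_{y^2}$, and by Corollary~\ref{cor:dpbetas} $\tilde\beta\xrightarrow{P}\beta$; the very same constants are the limits of $\hat{E}_n, \hat{F}_n, \hat{G}_n, \hat\beta$ by Lemma~\ref{lem:nonprivateconv}. Hence both the private and non-private quadratic forms converge in probability to the same constant, which by the previous sentence must equal $\sigma_e^2$; multiplying by $n/(n-r)\to 1$ finishes the first claim.

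For the numerator, the natural move is to apply Lemma~\ref{lem:squareddist} with $A_n = \sqrt{n}\tilde{E}_n(\tilde\beta - \tilde\beta^N)$ and $B_n = \sqrt{n}\hat{E}_n(\hat\beta - \hat\beta^N)$. Condition (2) of that lemma is supplied entirely by the non-private side: by Lemma~\ref{lem:fequiv}, $\norm{B_n}^2 = \norm{X_n\hat\beta - X_n\hat\beta^N}^2$, which by Theorem~\ref{thm:f}(1) is distributed as $\calX^2_{r-q}(\eta_n^2)\sigma_e^2$. Since $\eta_n^2\to\eta^2$ by hypothesis and the noncentral chi-squared law depends continuously on its noncentrality parameter, the continuous mapping theorem gives $\norm{B_n}\xrightarrow{D}L$ with $L=\sqrt{\chi^2_{r-q}(\eta^2)}\,\sigma_e$ and $L^2 = \calX^2_{r-q}(\eta^2)\sigma_e^2$; because this law is absolutely continuous, $\pr[\norm{B_n}=0]=0$ for every $n$.

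The crux is verifying condition (1), $A_n - B_n\xrightarrow{P}0$, and this is precisely where the $1/\sqrt{n}$ rates become indispensable. I would split
$$
A_n - B_n = \sqrt{n}(\tilde{E}_n - \hat{E}_n)(\tilde\beta - \tilde\beta^N) + \hat{E}_n\left[\sqrt{n}(\tilde\beta - \hat\beta) - \sqrt{n}(\tilde\beta^N - \hat\beta^N)\right].
$$
In the first summand, $\sqrt{n}(\tilde{E}_n - \hat{E}_n)\xrightarrow{P}0$ by Lemma~\ref{lem:t1conv1}(10), while $\tilde\beta - \tilde\beta^N$ converges in probability to a constant (by Corollary~\ref{cor:dpbetas} and Lemma~\ref{lem:t1conv1}), so Slutsky's Theorem annihilates it. In the second summand, $\hat{E}_n\xrightarrow{P}C^{1/2}$ while $\sqrt{n}(\tilde\beta - \hat\beta)\xrightarrow{P}0$ and $\sqrt{n}(\tilde\beta^N - \hat\beta^N)\xrightarrow{P}0$ by Lemma~\ref{lem:t1conv2}, so Slutsky's Theorem again returns $0$. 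Thus $A_n - B_n\xrightarrow{P}0$, and Lemma~\ref{lem:squareddist} delivers $\norm{A_n}^2\xrightarrow{D}\calX^2_{r-q}(\eta^2)\sigma_e^2$, as required.

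The main obstacle throughout is that every comparison between a DP quantity and its non-private analogue is amplified by a factor of $\sqrt{n}$, so mere consistency of the DP estimates is useless here — one genuinely needs the $\sqrt{n}$-rate statements of Lemmas~\ref{lem:t1conv1} and~\ref{lem:t1conv2}, and must carefully track which factors vanish at rate $1/\sqrt{n}$ (the noise-induced differences $\tilde{E}_n-\hat{E}_n$, $\tilde\beta-\hat\beta$, $\tilde\beta^N-\hat\beta^N$) versus which merely converge to a nonzero constant (the matrices $\hat{E}_n$ and the vectors $\tilde\beta - \tilde\beta^N$). Getting this bookkeeping right in the matrix–vector decomposition is the only genuinely delicate part; once it is in place, the two helper lemmas (Lemma~\ref{lem:squareddist} and the Continuous Mapping Theorem) do the rest mechanically.
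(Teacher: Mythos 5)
Your proposal is correct and follows essentially the same route as the paper's proof: the same identification of the non-private quadratic form with $\norm{Y_n-X_n\hat\beta}^2$ via Lemma~\ref{lem:fequiv} and Theorem~\ref{thm:f} for the denominator, and the same application of Lemma~\ref{lem:squareddist} with the identical decomposition of $A_n - B_n$ into a $\sqrt{n}(\tilde{E}_n-\hat{E}_n)$ term and $\hat{E}_n$ times the $\sqrt{n}$-rate differences from Lemma~\ref{lem:t1conv2}. Your treatment of condition (2) of Lemma~\ref{lem:squareddist} (passing from $\eta_n^2$ to $\eta^2$ and noting absolute continuity so $\pr[\norm{B_n}=0]=0$) is in fact slightly more careful than the paper's, which asserts the limiting distribution of $\norm{\hat{H}_n}$ without spelling this out.
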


\begin{proof}[Proof of Lemma~\ref{lem:convbeta}]

By Lemma~\ref{lem:nonprivateconv} and Lemma~\ref{lem:t1conv1}:
\begin{enumerate}
\item $\tilde\beta, \hat\beta \xrightarrow{P} \beta$,
\item $\tilde{E}_n, \hat{E}_n\rightarrow C^{1/2}$,
\item $\tilde{F}_n, \hat{F}_n\xrightarrow{P} (c_y\quad c_{xy})^T$,
\item $\tilde{G}_n, \hat{G}_n\xrightarrow{P} c_{y^2}$.
\end{enumerate}

Furthermore,
$$
\frac{n}{n-r} = \frac{1}{1 - r/n} \rightarrow 1.
$$

As a result, by Slutsky's Theorem,
$$
\left(\frac{n(\tilde{\beta}^T\tilde{E}_n^2\tilde{\beta} - 2\tilde{\beta}^T\tilde{F}_n + \tilde{G}_n) - n(\hat{\beta}^T\hat{E}_n^2\hat{\beta} - 2\hat{\beta}^T\hat{F}_n + \hat{G}_n)}{n-r}\right)\xrightarrow{P}0.
$$

By Theorem~\ref{thm:f},
$$
\frac{\norm{Y_n-X_n\hat\beta}^2}{n-r}\xrightarrow{P}\sigma_e^2.
$$
By Lemma~\ref{lem:fequiv},
$\norm{Y-X\hat\beta}^2 = n(\hat\beta^T\hat{E}_n^2\hat\beta - 2\hat\beta^T\hat{F}_n + \hat{G}_n)$.
As a result, by Lemma~\ref{lem:helperprob},
$$\frac{n(\tilde{\beta}^T\tilde{E}_n^2\tilde{\beta} - 2\tilde{\beta}^T\tilde{F}_n + \tilde{G}_n)}{n-r}\xrightarrow{P}\sigma_e^2.$$

Next, by Theorem~\ref{thm:f},
$\norm{X_n\hat\beta - X_n\hat\beta^N}^2\sim\chi^2_{r-q}(\eta^2)\sigma_e^2$.
Then by Lemma~\ref{lem:fequiv},
$\norm{\sqrt{n}\hat{E}_n(\hat\beta - \hat{\beta}^N)}^2\sim\chi^2_{r-q}(\eta^2)\sigma_e^2$.
We will show that
$$
\norm{\sqrt{n}\tilde{E}_n(\tilde\beta - \tilde{\beta}^N)}^2
\xrightarrow{D}\chi^2_{r-q}(\eta^2)\sigma_e^2.
$$

First, we define the random vectors
$$
\tilde{H}_n = \tilde{E}_n\sqrt{n}(\tilde\beta - \tilde{\beta}^N),\quad
\hat{H}_n = \hat{E}_n\sqrt{n}(\hat\beta - \hat{\beta}^N).
$$

By Lemma~\ref{lem:t1conv2},
$\sqrt{n}(\tilde\beta - \hat\beta)\xrightarrow{P}0$ and
$\sqrt{n}(\tilde\beta^N - \hat\beta^N)\xrightarrow{P}0$.
And since by Lemma~\ref{lem:nonprivateconv},
$\hat{E}_n\xrightarrow{P} C^{1/2}$,
we have that by Slutsky's Theorem,
$\sqrt{n}\hat{E}_n(\tilde\beta - \hat\beta)\xrightarrow{P}0$, and
$\sqrt{n}\hat{E}_n(\tilde\beta^N - \hat\beta^N)\xrightarrow{P}0$.
Also, by Lemma~\ref{lem:t1conv1},
$\sqrt{n}(\tilde{E}_n-\hat{E}_n)\xrightarrow{P} 0$ which implies
that, by Slutsky's Theorem, and Lemma~\ref{lem:t1conv2},
$\sqrt{n}(\tilde{E}_n-\hat{E}_n)\tilde\beta\xrightarrow{P}0$
and 
$\sqrt{n}(\tilde{E}_n-\hat{E}_n)\tilde\beta^N\xrightarrow{P}0$ so that
$\sqrt{n}(\tilde{E}_n-\hat{E}_n)(\tilde\beta - \tilde\beta^N)\xrightarrow{P}0$.

As a result,
\begin{align*}
&\tilde{H}_n - \hat{H}_n \\
&= \tilde{E}_n\sqrt{n}(\tilde{\beta}-\tilde{\beta}^N) - \hat{E}_n\sqrt{n}(\hat{\beta}-\hat{\beta}^N) \\
&= \tilde{E}_n\sqrt{n}\tilde\beta - \hat{E}_n\sqrt{n}\hat\beta
- \left[\tilde{E}_n\sqrt{n}\tilde{\beta}^N - \hat{E}_n\sqrt{n}\hat\beta^N\right] \\
&= (\tilde{E}_n-\hat{E}_n)\sqrt{n}\tilde\beta + \hat{E}_n\sqrt{n}(\tilde\beta - \hat\beta) - \left[(\tilde{E}_n-\hat{E}_n)\sqrt{n}\tilde\beta^N + \hat{E}_n\sqrt{n}(\tilde\beta^N - \hat\beta^N)\right]\\
&= (\tilde{E}_n-\hat{E}_n)\sqrt{n}(\tilde\beta - \tilde\beta^N) + \hat{E}_n\sqrt{n}(\tilde\beta - \hat\beta) - \left[\hat{E}_n\sqrt{n}(\tilde\beta^N - \hat\beta^N)\right]\\
&\xrightarrow{P} 0.
\end{align*}

By Lemma~\ref{lem:squareddist}, since
$\norm{\hat{H}_n}\xrightarrow{D}\chi_{r-q}(\eta^2)\sigma_e$ and
$\tilde{H}_n - \hat{H}_n\xrightarrow{P} 0$, we have that
$$\norm{\tilde{H}_n}^2\xrightarrow{D}\chi^2_{r-q}(\eta^2)\sigma_e^2.$$
As a result, 
$\norm{\sqrt{n}\tilde{E}_n(\tilde\beta - \tilde\beta^N)}^2\xrightarrow{D}\chi^2_{r-q}(\eta^2)\sigma_e^2$.
This completes the proof.

\end{proof}

Lemma~\ref{lem:convbeta} shows the convergence
of individual quantities that can now be
combined to show the convergence of the
DP test statistic
$\tilde{T}$:

\begin{proof}[Proof of Theorem~\ref{thm:dpf}]

First, by Corollary~\ref{cor:dpbetas},
under the null hypothesis:
$\tilde\beta^N = \tilde{\beta}_n^N
\xrightarrow{P}\beta$. And under the alternative
hypothesis:
$\tilde{\beta} = \tilde{\beta}_n\xrightarrow{P} \beta$.

By Lemma~\ref{lem:fequiv},
$$
T = T_n = \frac{n-r}{r-q}\cdot\frac{\norm{X\hat\beta - X\hat\beta^N}^2}{\norm{Y-X\hat\beta}^2}
= \frac{n-r}{r-q}\cdot\frac{\norm{\sqrt{n}\hat{E}_n(\hat\beta - \hat\beta^N)}^2}{n(\hat\beta^T\hat{E}_n^2\hat\beta - 2\hat\beta^T\hat{F}_n + \hat{G}_n)}.
$$

And by Equation~(\ref{eq:dpf}),
$$
\tilde{T} = \tilde{T}_n = \frac{n-r}{r-q}\cdot\frac{\norm{\sqrt{n}\tilde{E}_n(\tilde\beta - \tilde\beta^N)}^2}{n(\tilde\beta^T\tilde{E}_n^2\tilde\beta - 2\tilde\beta^T\tilde{F}_n + \tilde{G}_n)}.
$$

From Theorem~\ref{thm:f}, in the non-private case where
$Y_n\sim\calN(X_n\beta, \sigma^2_eI_{n\times n})$,
if $T = T_n$ is the test statistic from Equation~(\ref{eq:glrt}), then
$$
T_n\sim F_{r-q, n-r}(\eta_n^2),\quad \eta_n^2 = \frac{\norm{X_n\beta - X_n\beta^N}^2}{\sigma_e^2}.
$$
Also, by Theorem~\ref{thm:f},
the asymptotic distribution of $T$ is a chi-squared
distribution. i.e., 
$T = T_n\xrightarrow{D}\frac{\chi^2_{r-q}(\eta^2)}{r-q}$.
Next, we show that the DP $F$-statistic also
has asymptotic distribution of chi-squared.

By Lemma~\ref{lem:convbeta},
$$
\norm{\sqrt{n}\tilde{E}_n(\tilde\beta - \tilde{\beta}^N)}^2\xrightarrow{D}\calX^2_{r-q}(\eta^2)\sigma_e^2,
$$
and
$$
\frac{n(\tilde{\beta}^T\tilde{E}_n^2\tilde{\beta} - 2\tilde{\beta}^T\tilde{F}_n + \tilde{G}_n)}{n-r}\xrightarrow{P}\sigma_e^2.
$$

Let
$$
W_n = \left(\frac{\norm{\sqrt{n}\tilde{E}_n(\tilde\beta - \tilde{\beta}^N)}^2}{r-q}, 
\frac{n(\tilde{\beta}^T\tilde{E}_n^2\tilde{\beta} - 2\tilde{\beta}^T\tilde{F}_n + \tilde{G}_n)}{n-r}\right) = (A_n, B_n),
$$
and
$$
W = \left(\frac{\calX^2_{r-q}(\eta^2)\sigma_e^2}{r-q}, \sigma_e^2\right).
$$

By the condition that
$\sigma_e > 0$\,$(\pr[\sigma_e = 0] = 0)$ and
the continuous mapping theorem (Theorem~\ref{thm:cmt}),
$\tilde{T}_n = g(W_n) = g(A_n, B_n) = A_n/B_n$
converges,
in distribution, to $\frac{\chi^2_{r-q}(\eta^2)}{r-q}$ as $n\rightarrow\infty$.

\end{proof}

\section{Experimental Evaluation of Power and Significance}
\label{sec:experiments}

We will measure the effectiveness of our hypothesis tests via 
significance and power.
In Section~\ref{sec:expframework}, we describe our meta-procedures
for collating the significance and power of our implementations of
non-private and private statistical tests.

The power and significance of our differentially private
tests are estimated on both semi-synthetic datasets based on
the Opportunity Atlas~\citep{NBERw25147, 10.1093/qje/qjz042}
and on synthetic datasets.
The OI semi-synthetic datasets consists of simulated microdata
for each census tract in some states in the U.S.
The dependent variable $Y$ is the child national income percentile and the
independent variable $X$ is the corresponding parent national income percentile.
See~\citep{AMSSV20} for more details on the
properties of simulated data from the OI team.
In the OI data, $X$ is lognormally distributed and the distribution of counts of
individuals across tracts in a state follows an exponential distribution.

\paragraph{General Parameter Setup for Synthetic Data:}
For experimental evaluation on synthetic datasets, we generated datasets
with sizes between
$n=100$ and $n=10,000$. 

For both the linear relationship and mixture model tests on synthetic data below, we consider a subset of
the following values of the privacy budget
$\rho$: $\{0.1^2/2, 0.5^2/2, 1^2/2, 2^2/2, 3^2/2, 5^2/2, 10^2/2\}$.

We draw the independent variables $x_1, \ldots, x_n$ according to a few
different distributions:
Normal, Uniform, Exponential. We will
detail the parameters used to generated
variables from these distributions in the corresponding subsections.

For all tests below, the clipping
parameter is either set to $\Delta = 2$ or $\Delta = 3$.
For the synthetic data,
the dependent variable $Y$ is generated using the linear or mixture
model
specification described in previous sections and by fixing or
varying parameters (such as $\sigma_e$).
For estimating the power and significance,
we fix the target significance
level to 0.05 and run Monte Carlo tests 2000 times.
We estimate the power and significance as the fraction of times the null
is rejected, given various settings of parameters that satisfy the
alternative and null hypothesis, respectively.

\subsection{Testing a Linear Relationship on Synthetic Data}

\subsubsection{$F$-statistic}

We evaluate our DP linear relationship test
on synthetically generated data from three
different distributions: normal, uniform, and exponential. We
also vary parameters such as: the slope of the linear model and
the noise distribution of the dependent variable.

\textbf{Evaluating the Significance for Normally Distributed Independent Variables}:
Generally, we see that the significance
remains below the target significance level, on average, for all values of
$\rho$. For the linear relationship tester, when the
standard deviation
of the dependent variable ($\sigma_e$) is small (Figure~\ref{fig:lin1a}), we see that
the true significance level is well below the target significance of 0.05, which is
fine (but conservative). We conjecture that this happens because when $\sigma_e$ is small:
(i)  we fail to reject when the noisy estimate of $\sigma_e$ is $\leq 0$; or
(ii)  the test statistic under the null distribution will be almost always 0 since under the null (even without privacy), 
the standard deviation of the test statistic is proportional to $\sigma_e$. 
In Figures~\ref{fig:lin1a},~\ref{fig:lin1b} and~\ref{fig:lin1c}, 
we see the significance of the linear tester attains the target (of 0.05) 
as we vary the noise in the dependent variable $\sigma_e$.

\begin{figure}
  \begin{subfigure}[b]{0.3\textwidth}
    \includegraphics[width=\textwidth]{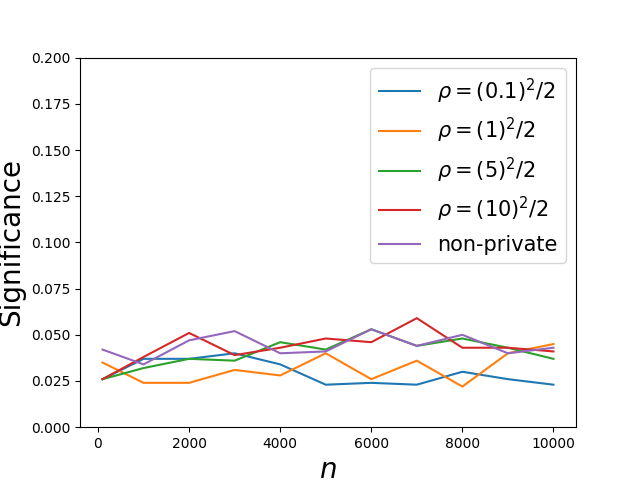}
    \caption{Significance for testing a linear relationship. $x_i\sim\calN(0.5, 1)$, $y_i\sim 0\cdot x_i + \calN(0, 0.001^2)$. $\Delta = 2$.}
    \label{fig:lin1a}
  \end{subfigure}
  \qquad
   \begin{subfigure}[b]{0.3\textwidth}
    \includegraphics[width=\textwidth]{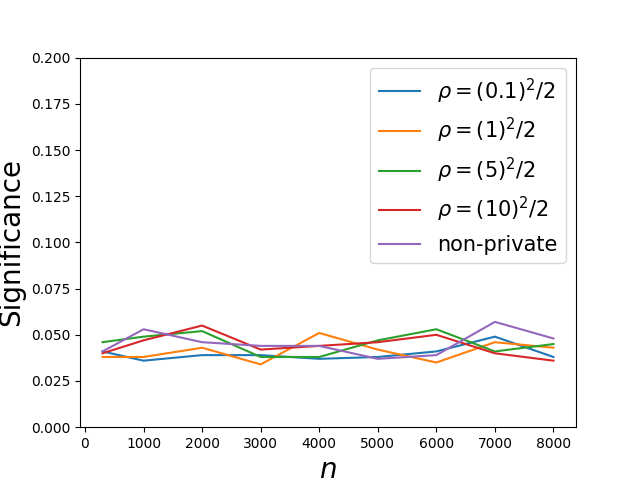}
    \caption{Significance for testing a linear relationship.  $x_i\sim\calN(0.5, 1)$, $y_i\sim 0\cdot x_i + \calN(0, 0.35^2)$. $\Delta = 2$.}
    \label{fig:lin1b}
    \end{subfigure}
    \qquad
    \begin{subfigure}[b]{0.3\textwidth}
    \includegraphics[width=\textwidth]{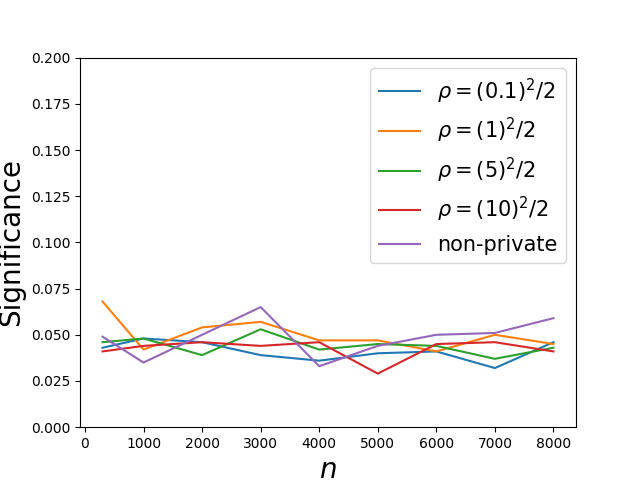}
    \caption{Significance for testing a linear relationship.  $x_i\sim\calN(0.5, 1)$, $y_i\sim 0\cdot x_i + \calN(0, 1)$. $\Delta = 2$.}
    \label{fig:lin1c}
    \end{subfigure}
  \caption{}
\end{figure}

\textbf{Evaluating Power for Varying the Noise in the Dependent Variable}:
For Figures~\ref{fig:lin3a},~\ref{fig:lin3b}, and~\ref{fig:lin3c}, 
we set the true slope to 1.
We then vary the
noise in the dependent variable. That is, 
for the general linear model,
$Y \sim\calN(X\beta, \sigma_e^2I_{n\times n})$, we vary
$\sigma_e$. The following values of $\sigma_e$ are considered:
$\{0.001, 0.35, 1\}$.

In Figure~\ref{fig:lin3a},
we generally see that compared to higher values of $\sigma_e$
(Figures~\ref{fig:lin3b} and~\ref{fig:lin3c}),
the power is relatively low.
We believe this occurs because
when $\sigma_e$ is small, its DP estimate is more likely to be 
less than 0, in which case we fail to reject the null (even when the alternative is
true). This generally leads to a reduction in the power.

\begin{figure}
  \begin{subfigure}[b]{0.3\textwidth}
    \includegraphics[width=\textwidth]{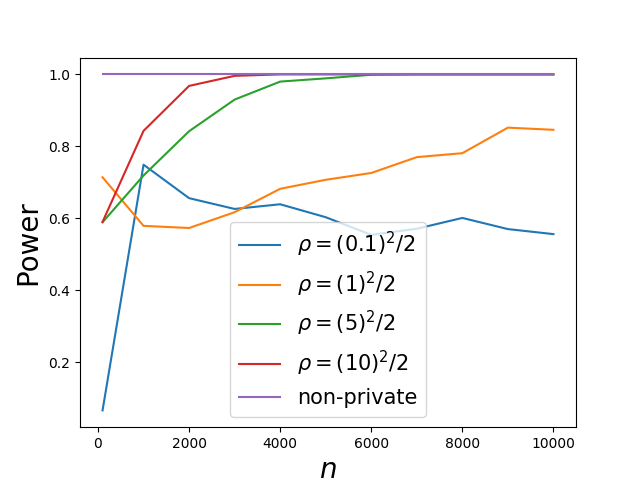}
    \caption{Power for testing a linear relationship.
    $x_i\sim\calN(0.5, 1)$, $y_i\sim 1\cdot x_i + \calN(0, 0.001^2)$. $\Delta = 2$.}
    \label{fig:lin3a}
  \end{subfigure}
  \qquad
  \begin{subfigure}[b]{0.3\textwidth}
    \includegraphics[width=\textwidth]{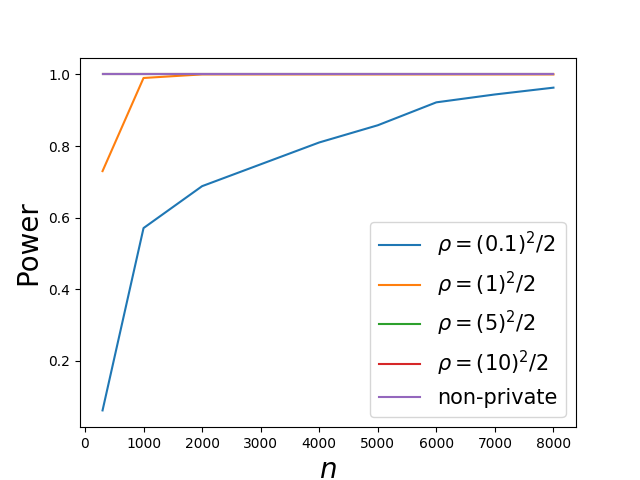}
    \caption{Power for testing a linear relationship. $x_i\sim\calN(0.5, 1)$, $y_i\sim 1\cdot x_i + \calN(0, 0.35^2)$. $\Delta = 2$.}
    \label{fig:lin3b}
  \end{subfigure}
  \qquad
  \begin{subfigure}[b]{0.3\textwidth}
    \includegraphics[width=\textwidth]{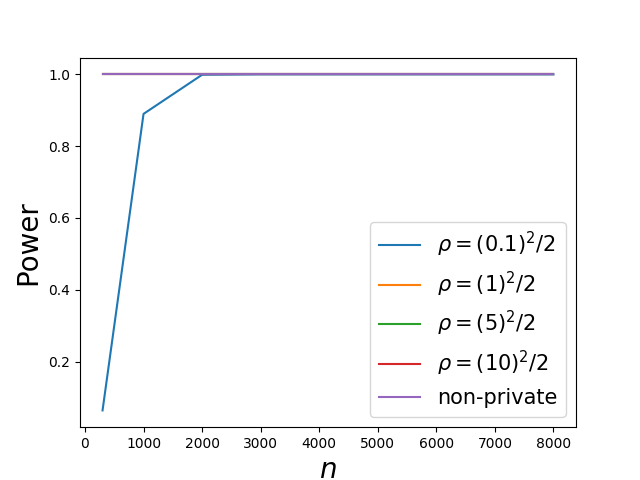}
    \caption{Power for testing a linear relationship. $x_i\sim\calN(0.5, 1)$, $y_i\sim 1\cdot x_i + \calN(0, 1)$. $\Delta = 2$.}
    \label{fig:lin3c}
  \end{subfigure}
  \caption{}
\end{figure}

\textbf{Evaluating Power for Varying Slopes}:
Figures~\ref{fig:lin2a},~\ref{fig:lin2b}
show the power
of the linear test for slopes of 0.1, 1.
We generally see that the larger the slope, the higher the power of
the DP tests.

\begin{figure}
  \begin{subfigure}[b]{0.5\textwidth}
    \includegraphics[width=\textwidth]{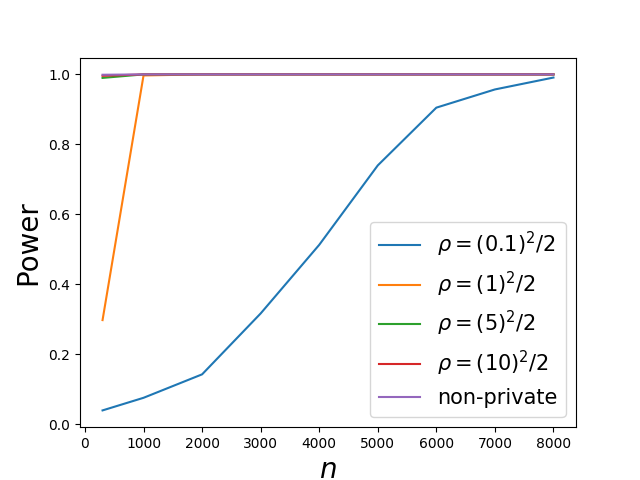}
    \caption{Power for testing a linear relationship.\\ $x_i\sim\calN(0.5, 1)$, $y_i\sim 0.1\cdot x_i + \calN(0, 0.35^2)$. $\Delta = 2$.}
    \label{fig:lin2a}
  \end{subfigure}
  \qquad
  \begin{subfigure}[b]{0.5\textwidth}
    \includegraphics[width=\textwidth]{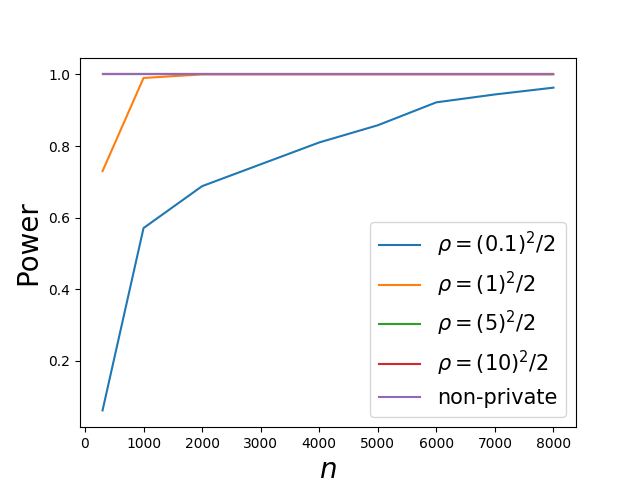}
    \caption{Power for testing a linear relationship.\\
    $x_i\sim\calN(0.5, 1)$, $y_i\sim 1\cdot x_i + \calN(0, 0.35^2)$. $\Delta = 2$.}
    \label{fig:lin2b}
  \end{subfigure}
  \caption{}
\end{figure}

\textbf{Evaluating the Significance while Varying the Distribution of the Independent Variable}:
For Figures~\ref{fig:lin5a},~\ref{fig:lin5b},
and~\ref{fig:lin5c}, 
we set
the standard deviation of the noise dependent variable to 0.35.
We then vary the
distribution of the independent variable 
--- while maintaining the variance ---
to take on one of the following:
\begin{enumerate}
\item\textbf{Normal}: with mean 0.5 and variance 1/12.
\item\textbf{Uniform}: between 0 and 1 (variance of 1/12).
\item\textbf{Exponential}: with scale of $1/\sqrt{12}$.
\end{enumerate}

We observe that the significance is still preserved even though,
in our DP testers, the null distribution is simulated via a normal
distribution.

\begin{figure}
  \begin{subfigure}[b]{0.3\textwidth}
    \includegraphics[width=\textwidth]{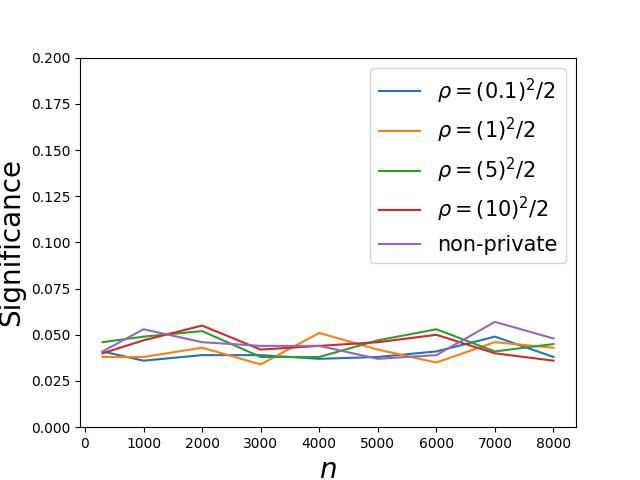}
    \caption{Significance for testing a linear relationship. Normal Distribution on $X$.}
    \label{fig:lin5a}
  \end{subfigure}
  \qquad
  \begin{subfigure}[b]{0.3\textwidth}
    \includegraphics[width=\textwidth]{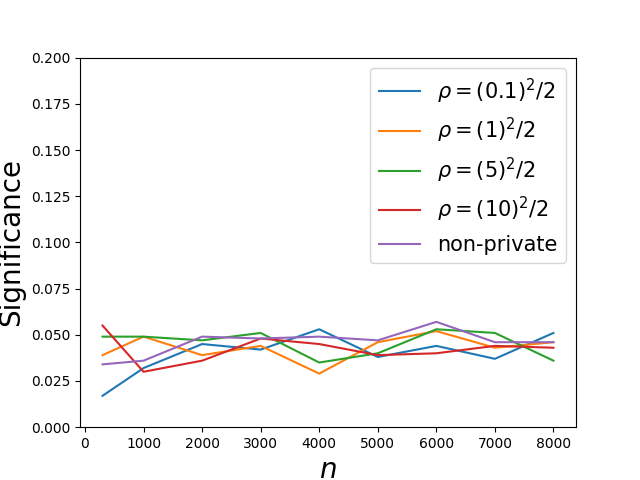}
    \caption{Significance for testing a linear relationship. Uniform Distribution on $X$.}
    \label{fig:lin5b}
  \end{subfigure}
  \begin{subfigure}[b]{0.3\textwidth}
    \includegraphics[width=\textwidth]{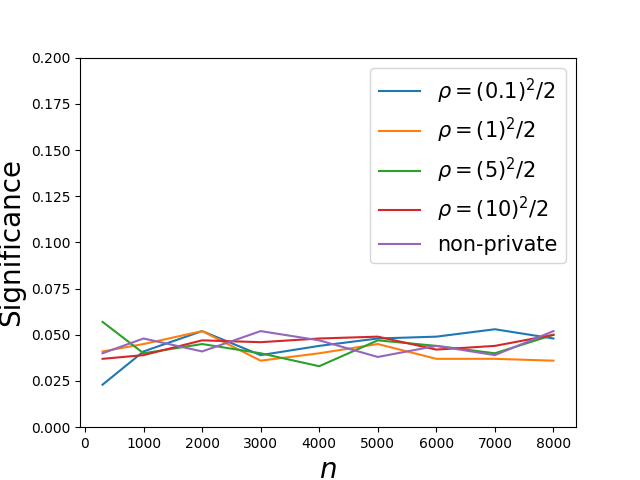}
    \caption{Significance for testing a linear relationship. Exponential Distribution on $X$.}
    \label{fig:lin5c}
  \end{subfigure}
  \caption{}
\end{figure}

\subsubsection{Bootstrap Confidence Intervals}
\label{sec:ci}

Using the duality between confidence interval estimation and
hypothesis testing, we can construct hypothesis tests
for testing a linear relationship based on DP confidence
interval procedures.
Specifically, 
we compare the $F$-statistic linear relationship tester
to the tester that uses DP confidence intervals.
See Section~\ref{sec:dual} for more details on the
experimental framework of the DP bootstrap confidence intervals.
Algorithm~\ref{alg:mctci} summarizes the approach for testing
that builds on DP confidence intervals.

In Figure~\ref{fig:cisig}, we present
experimental results for the significance level of
Algorithm~\ref{alg:mctci} compared to 
Algorithm~\ref{alg:mct} instantiated with
the DP $F$-statistic.
As we see, Algorithm~\ref{alg:mctci}
achieves the target significance level. 
In Figure~\ref{fig:cipower}, we also present
experimental results for the power of
Algorithm~\ref{alg:mctci} compared to 
Algorithm~\ref{alg:mct}. We see that Algorithm~\ref{alg:mctci}
has less power than Algorithm~\ref{alg:mct}.
This observation is more pronounced for
less concentrated distributions (i.e., uniform) on the
independent variable. See
Figure~\ref{fig:cipowerunif}.
This might be due to the, sometimes excessive,
width of the confidence interval produced by the
bootstrap interval (in order to ensure coverage under the null
hypothesis).

Figures~\ref{fig:cisig},~\ref{fig:cipower},~\ref{fig:cisigunif},
and~\ref{fig:cipowerunif} show
results averaged out over 2000 trials.
The dashed lines
correspond to the bootstrap confidence interval approach
(denoted CI) while the solid lines
are for the $F$-statistic (denoted $F$-stat).

\begin{figure}
   \begin{subfigure}[b]{0.5\textwidth}
    \includegraphics[width=\textwidth]{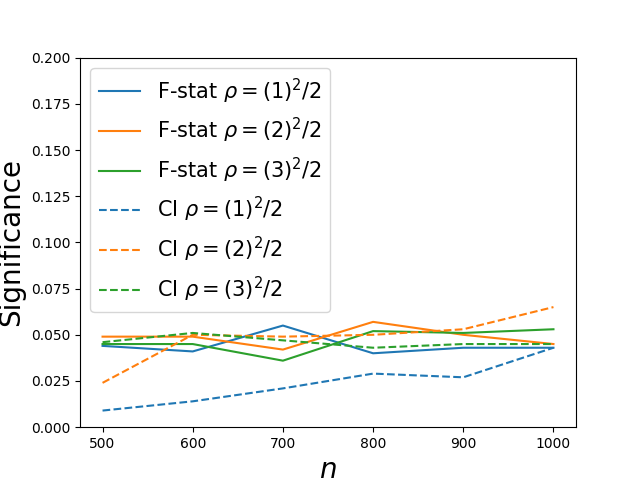}
    \caption{Significance for $F$-statistic versus confidence
    interval approach. 
    $x_i\sim\calN(0.5, 1)$, $y_i\sim 0\cdot x_i + \calN(0, 0.35^2)$.
    $\Delta = 2$.}
    \label{fig:cisig}
  \end{subfigure}
  \qquad
   \begin{subfigure}[b]{0.5\textwidth}
    \includegraphics[width=\textwidth]{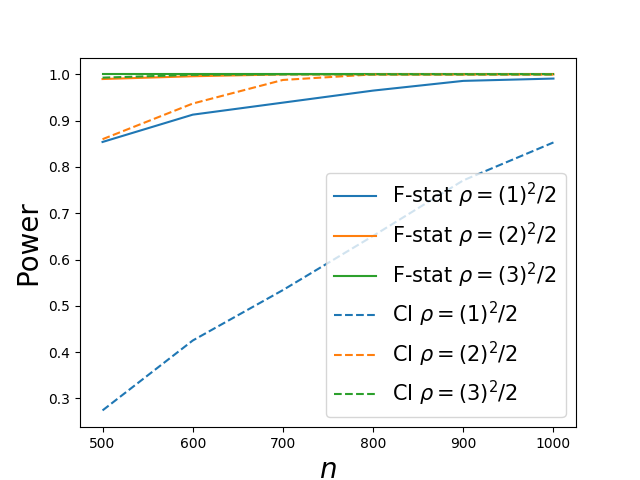}
    \caption{Power for $F$-statistic versus confidence
    interval approach. $x_i\sim\calN(0.5, 1)$, $y_i\sim 1\cdot x_i + \calN(0, 0.35^2)$.
    $\Delta = 2$.}
    \label{fig:cipower}
  \end{subfigure}
  \caption{}
\end{figure}

\begin{figure}
   \begin{subfigure}[b]{0.5\textwidth}
    \includegraphics[width=\textwidth]{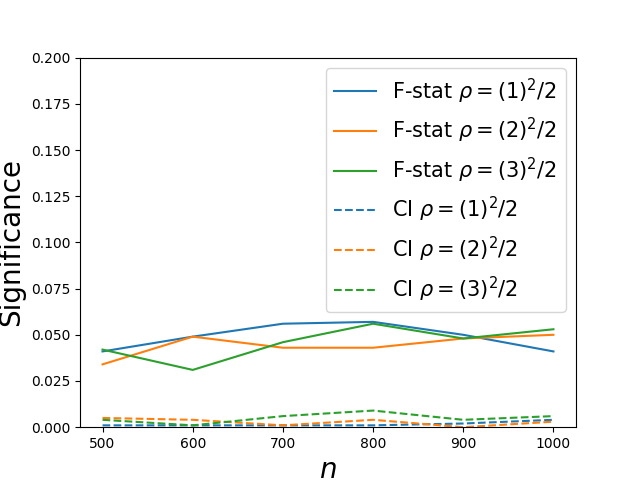}
    \caption{Significance for $F$-statistic versus confidence
    interval approach. 
    $x_i\sim\Unif[0, 1]$, $y_i\sim 0\cdot x_i + \calN(0, 0.35^2)$.
    $\Delta = 2$.}
    \label{fig:cisigunif}
  \end{subfigure}
  \qquad
   \begin{subfigure}[b]{0.5\textwidth}
    \includegraphics[width=\textwidth]{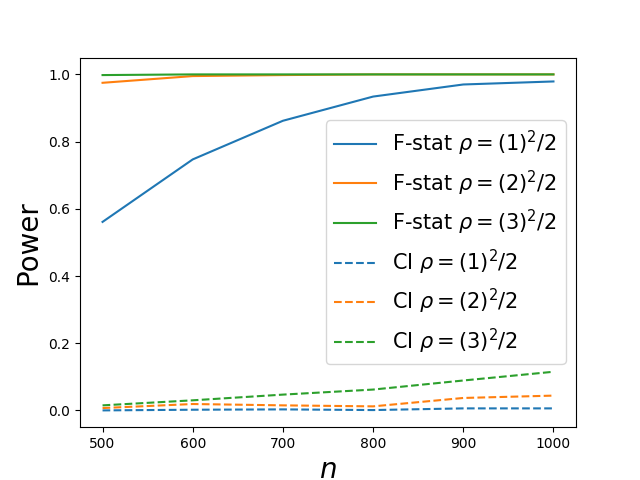}
    \caption{Power for $F$-statistic versus confidence
    interval approach. $x_i\sim\Unif[0, 1]$, $y_i\sim 1\cdot x_i + \calN(0, 0.35^2)$.\\
    $\Delta = 2$.}
    \label{fig:cipowerunif}
  \end{subfigure}
  \caption{}
\end{figure}

\subsubsection{Bernoulli Tester}
\label{sec:bin}

The DP Bernoulli tester has a higher power than the
DP $F$-statistic when the slope is large or the privacy-loss
parameter is small; otherwise the DP $F$-statistic performs
better.
We vary the slope from 0.01 up to 1. 
As the slope gets smaller, the performance gap between the $F$-statistic and the Bernoulli tester gets larger. 
Figures~\ref{fig:bin01} and~\ref{fig:bin05} show the
power of the DP $F$-statistic compared to the DP Bernoulli
tester for the problem of testing a 
linear relationship.
The significance levels of the Bernoulli
tester are shown in Figures~\ref{fig:binsig1} 
and~\ref{fig:binsig2}.

The results are averaged out over 1000 trials.
For the figures illustrating the power, dashed lines
correspond to the Bernoulli testing approach
(denoted Bern) while the solid lines
are for the $F$-statistic (denoted $F$-stat).

\begin{figure}
   \begin{subfigure}[b]{0.5\textwidth}
    \includegraphics[width=\textwidth]{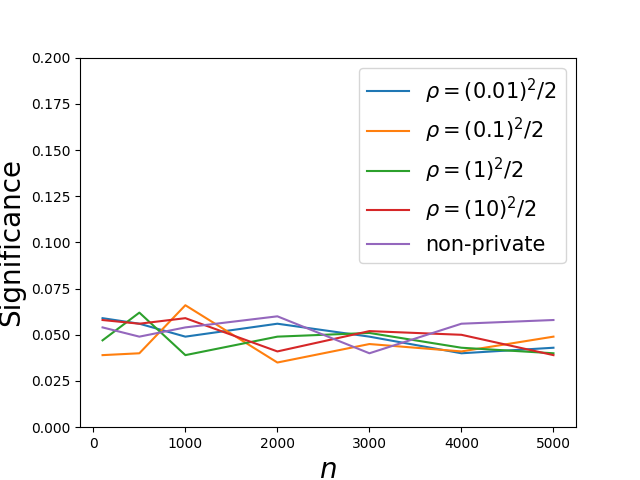}
    \caption{Significance for testing for a fair coin. \\
    $n$ observations are generated from $\Bern(1/2)$.\\
    }
    \label{fig:binsig1}
  \end{subfigure}
  \qquad
   \begin{subfigure}[b]{0.5\textwidth}
    \includegraphics[width=\textwidth]{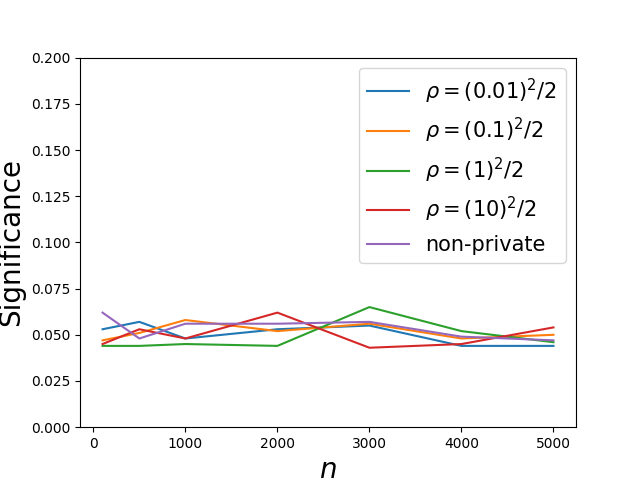}
    \caption{Significance for linear relationship testing via Bernoulli testing approach. $x_i\sim\calN(0.5, 1)$, $y_i\sim 0\cdot x_i + \calN(0, 1)$.
    $\Delta = 2$.}
    \label{fig:binsig2}
  \end{subfigure}
  \caption{}
\end{figure}

\begin{figure}
   \begin{subfigure}[b]{0.5\textwidth}
    \includegraphics[width=\textwidth]{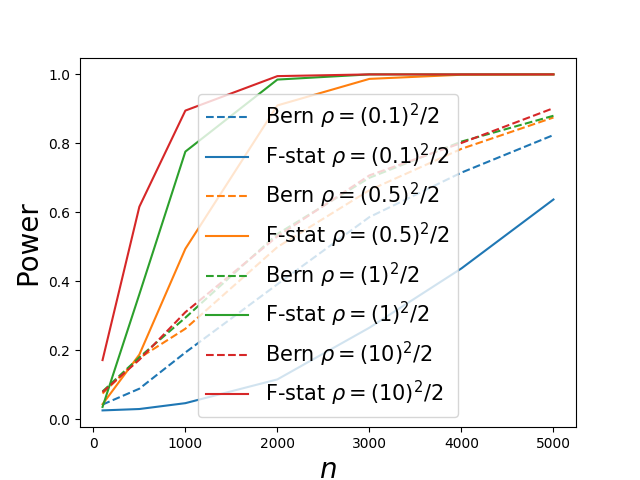}
    \caption{Power for $F$-statistic versus Bernoulli testing approach. $x_i\sim\calN(0.5, 1)$, $y_i\sim 0.1\cdot x_i + \calN(0, 1)$.
    $\Delta = 2$.}
    \label{fig:bin01}
  \end{subfigure}
  \qquad
   \begin{subfigure}[b]{0.5\textwidth}
    \includegraphics[width=\textwidth]{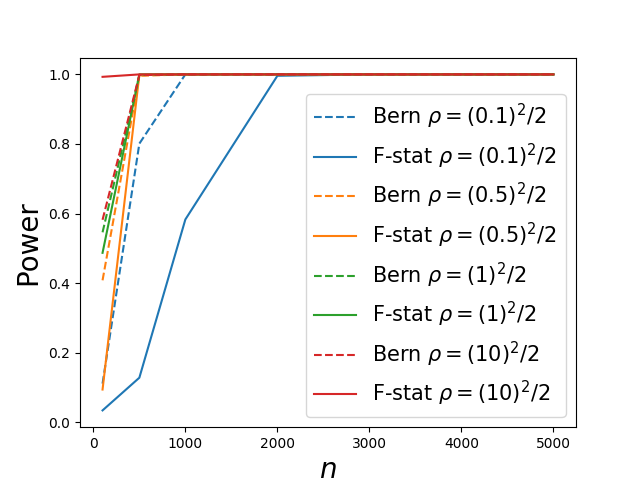}
    \caption{Power for $F$-statistic versus Bernoulli testing approach. $x_i\sim\calN(0.5, 1)$, $y_i\sim 0.5\cdot x_i + \calN(0, 1)$.
    $\Delta = 2$.}
    \label{fig:bin05}
  \end{subfigure}
  \caption{}
\end{figure}

\subsection{Testing Mixture Models on Synthetic Data}

\subsubsection{$F$-Statistic}

We evaluate the $F$-statistic DP mixture model test on
synthetically generated data. We vary
parameters such as: the fraction of data in each group and
the slopes used to generate data for each group.
Let $\beta_1, \beta_2$ denote the slopes of the two groups.

\textbf{Evaluating the Significance}:
Like in the DP linear model tester, we
also see that we achieve
the target significance levels, on average, for all values of
$\rho$. In Figures~\ref{fig:mix1a},~\ref{fig:mix1b},
and~\ref{fig:mix1c}, we vary the noise in the dependent variable.
In Figures~\ref{fig:mix2a},~\ref{fig:mix2b},
and~\ref{fig:mix2c}, we vary the fraction of group sizes,
using either a 1/8, 1/4, or 1/2 fraction for the first group.
We see that the more unbalanced splits tend to have lower significance
levels.

\begin{figure}
  \begin{subfigure}[b]{0.3\textwidth}
    \includegraphics[width=\textwidth]{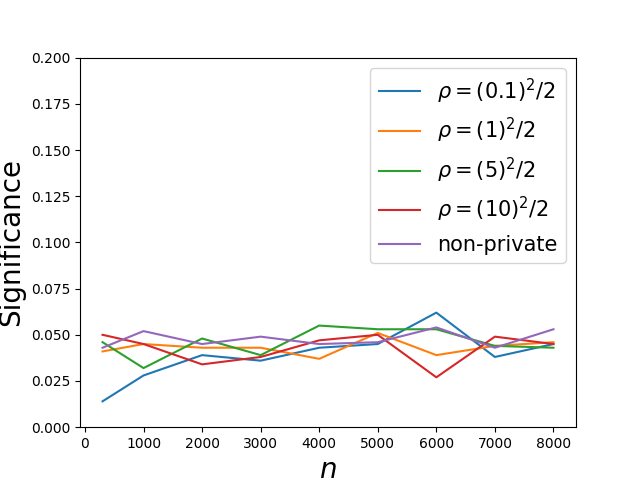}
    \caption{Significance for testing mixtures. Equal-sized groups. $x_i\sim\calN(0.5, 1)$, $y_i\sim 1\cdot x_i + \calN(0, 0.01^2)$ for Group 1. $y_i\sim 1\cdot x_i + \calN(0, 0.01^2)$ for Group 2.}
    \label{fig:mix1a}
  \end{subfigure}
  \qquad
   \begin{subfigure}[b]{0.3\textwidth}
    \includegraphics[width=\textwidth]{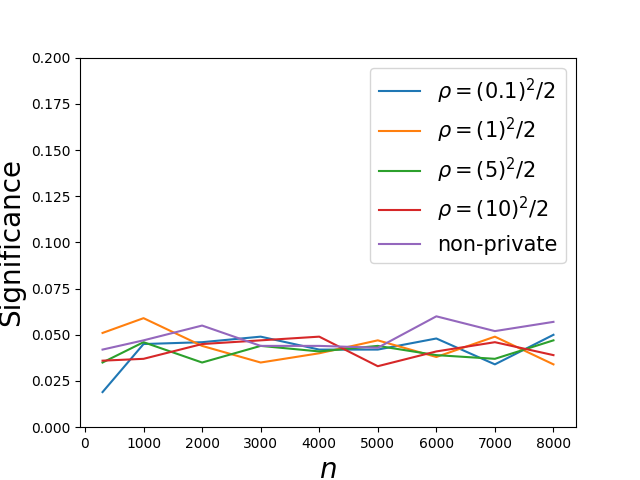}
    \caption{Significance for testing mixtures. Equal-sized groups. $x_i\sim\calN(0.5, 1)$, $y_i\sim 1\cdot x_i + \calN(0, 0.35^2)$ for Group 1. $y_i\sim 1\cdot x_i + \calN(0, 0.35^2)$ for Group 2.}
    \label{fig:mix1b}
    \end{subfigure}
    \qquad
    \begin{subfigure}[b]{0.3\textwidth}
    \includegraphics[width=\textwidth]{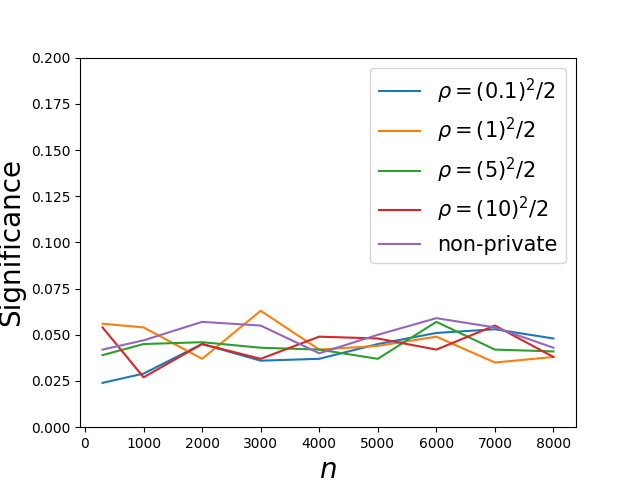}
    \caption{Significance for testing mixtures. Equal-sized groups. $x_i\sim\calN(0.5, 1)$, $y_i\sim 1\cdot x_i + \calN(0, 1)$ for Group 1. $y_i\sim 1\cdot x_i + \calN(0, 1)$ for Group 2.}
    \label{fig:mix1c}
    \end{subfigure}
  \caption{}
\end{figure}

\begin{figure}
  \begin{subfigure}[b]{0.3\textwidth}
    \includegraphics[width=\textwidth]{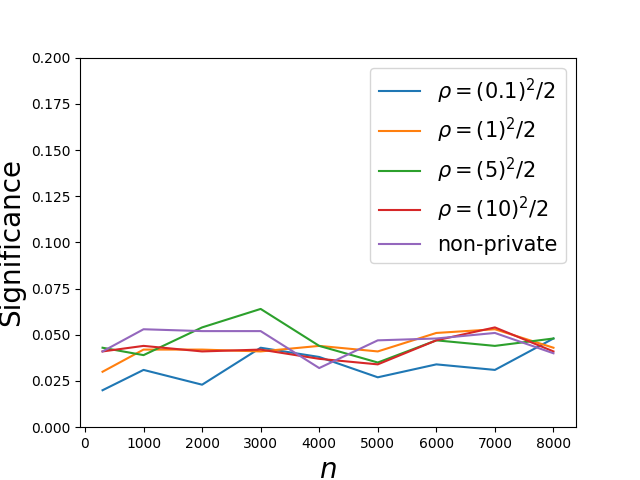}
    \caption{Significance for testing mixtures. 1/8th vs. 7/8th splits. $x_i\sim\calN(0.5, 1)$, $y_i\sim 1\cdot x_i + \calN(0, 0.35^2)$ for Group 1. $y_i\sim 1\cdot x_i + \calN(0, 0.35^2)$ for Group 2.}
    \label{fig:mix2a}
  \end{subfigure}
  \qquad
   \begin{subfigure}[b]{0.3\textwidth}
    \includegraphics[width=\textwidth]{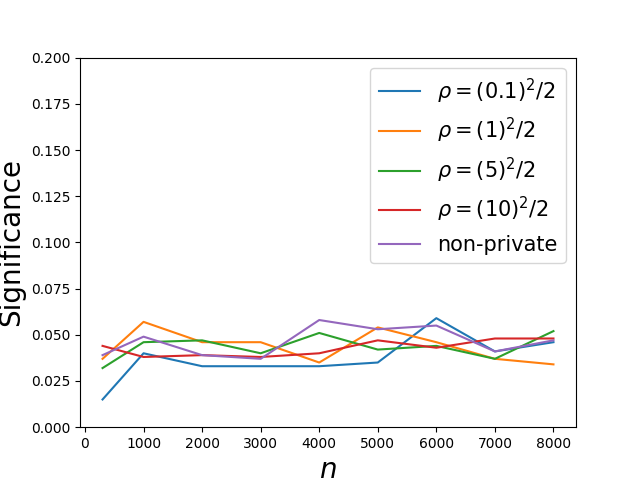}
    \caption{Significance for testing mixtures. 1/4th vs. 3/4th splits. $x_i\sim\calN(0.5, 1)$, $y_i\sim 1\cdot x_i + \calN(0, 0.35^2)$ for Group 1. $y_i\sim 1\cdot x_i + \calN(0, 0.35^2)$ for Group 2.}
    \label{fig:mix2b}
    \end{subfigure}
    \qquad
    \begin{subfigure}[b]{0.3\textwidth}
    \includegraphics[width=\textwidth]{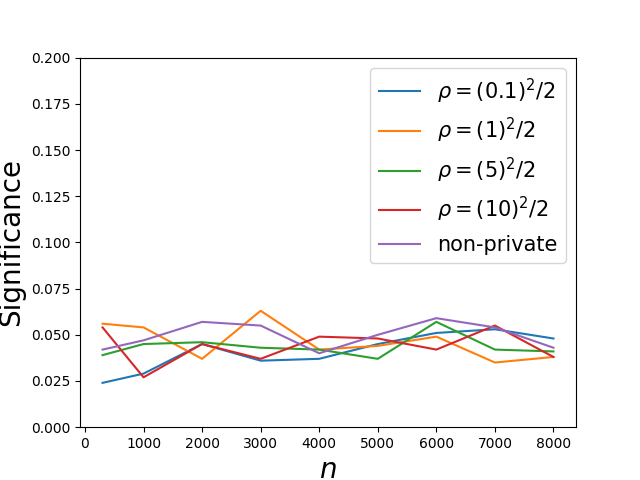}
    \caption{Significance for testing mixtures. 
     Equal-sized groups. $x_i\sim\calN(0.5, 1)$, $y_i\sim 1\cdot x_i + \calN(0, 0.35^2)$ for Group 1. $y_i\sim 1\cdot x_i + \calN(0, 0.35^2)$ for Group 2.}
    \label{fig:mix2c}
    \end{subfigure}
  \caption{}
\end{figure}

\textbf{Power while Varying the Group Size Fraction}: Let $n$ be the total
number of datapoints and $n_1, n_2$ be the number of points in
groups 1 and 2 respectively. We vary the fraction of points in
group 1: $n_1/n$. Setting the slopes of each group to $\beta_1=-1$
and $\beta_2=1$,
we vary this fraction so that $n_1/n\in\{1/8, 1/4, 1/2\}$.
For Figure~\ref{fig:mix3a},
we set the group sizes to be equal.
For Figure~\ref{fig:mix3b},
we set the group sizes to be $n/4, 3n/4$.
And last, for
Figure~\ref{fig:mix3c}, the group sizes are
$n/8, 7n/8$.

Generally, the more even the group size fractions are, the higher
the power of the DP test for testing mixtures in the general
linear model.

\begin{figure}
  \begin{subfigure}[b]{0.3\textwidth}
    \includegraphics[width=\textwidth]{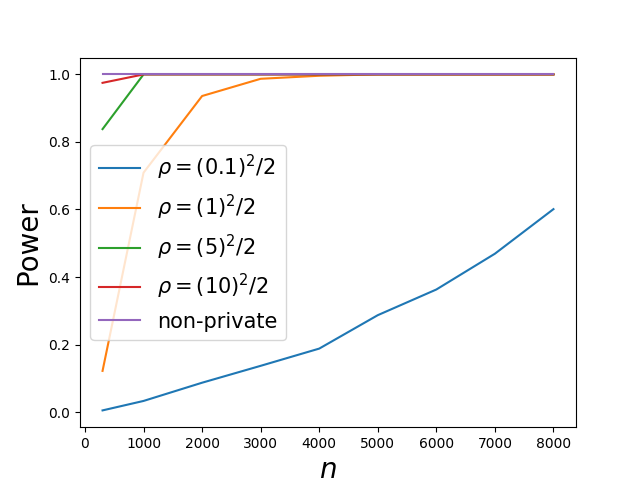}
    \caption{Power for testing mixtures. 1/8th vs. 7/8th splits. $x_i\sim\calN(0.5, 1)$, $y_i\sim -1\cdot x_i + \calN(0, 0.35^2)$ for Group 1. $y_i\sim 1\cdot x_i + \calN(0, 0.35^2)$ for Group 2.}
    \label{fig:mix3a}
  \end{subfigure}
  \qquad
   \begin{subfigure}[b]{0.3\textwidth}
    \includegraphics[width=\textwidth]{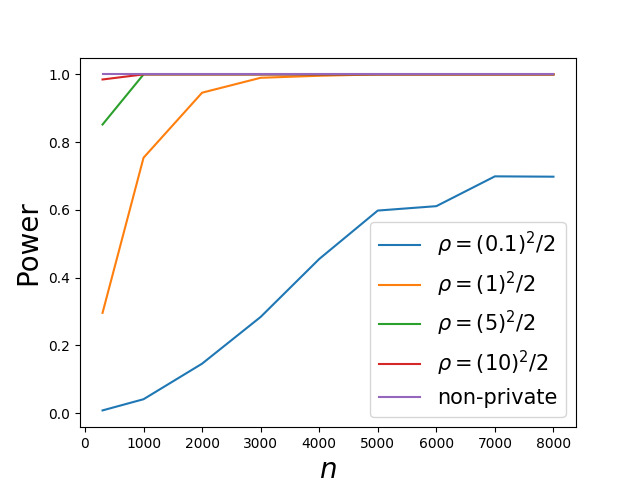}
    \caption{Power for testing mixtures. 1/4th vs. 3/4th splits. $x_i\sim\calN(0.5, 1)$, $y_i\sim -1\cdot x_i + \calN(0, 0.35^2)$ for Group 1. $y_i\sim 1\cdot x_i + \calN(0, 0.35^2)$ for Group 2.}
    \label{fig:mix3b}
    \end{subfigure}
    \qquad
    \begin{subfigure}[b]{0.3\textwidth}
    \includegraphics[width=\textwidth]{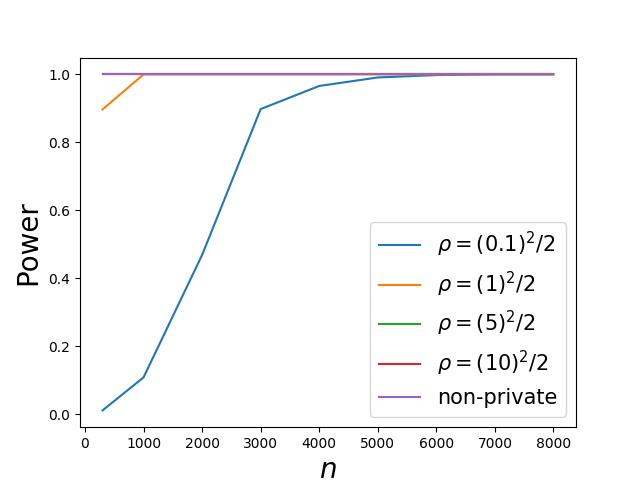}
    \caption{Power for testing mixtures. 
     Equal-sized groups. $x_i\sim\calN(0.5, 1)$, $y_i\sim -1\cdot x_i + \calN(0, 1)$ for Group 1. $y_i\sim 1\cdot x_i + \calN(0, 1)$ for Group 2.}
    \label{fig:mix3c}
    \end{subfigure}
  \caption{}
\end{figure}

\textbf{Power while Varying the Difference Between Slopes in Each Group}: 
Let $\beta_1, \beta_2$ correspond to the slopes for groups 1 and 2.
We vary $|\beta_1 - \beta_2|$.
Generally, we see that the larger $|\beta_1 - \beta_2|$ is, the
higher the power of the test.
In Figures~\ref{fig:mix4a},~\ref{fig:mix4b}
we vary the slope in the two groups and observe
the aforementioned phenomena.

\begin{figure}
  \begin{subfigure}[b]{0.5\textwidth}
    \includegraphics[width=\textwidth]{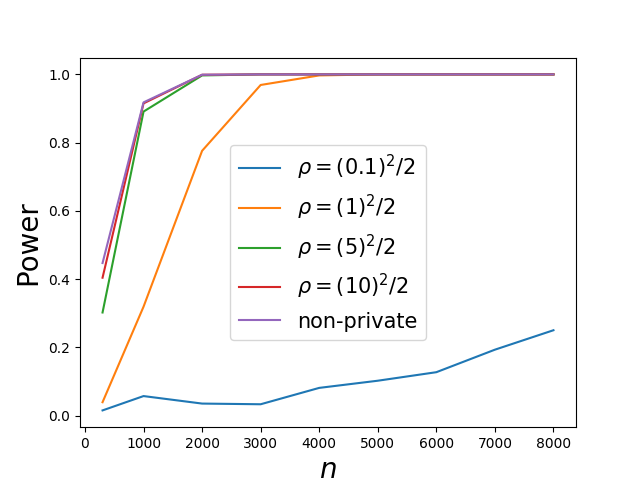}
    \caption{Power for testing mixtures. Equal-sized groups. $x_i\sim\calN(0.5, 1)$, $y_i\sim -0.1\cdot x_i + \calN(0, 0.35^2)$ for Group 1. $y_i\sim 0.1\cdot x_i + \calN(0, 0.35^2)$ for Group 2.}
    \label{fig:mix4a}
  \end{subfigure}
  \qquad
   \begin{subfigure}[b]{0.5\textwidth}
    \includegraphics[width=\textwidth]{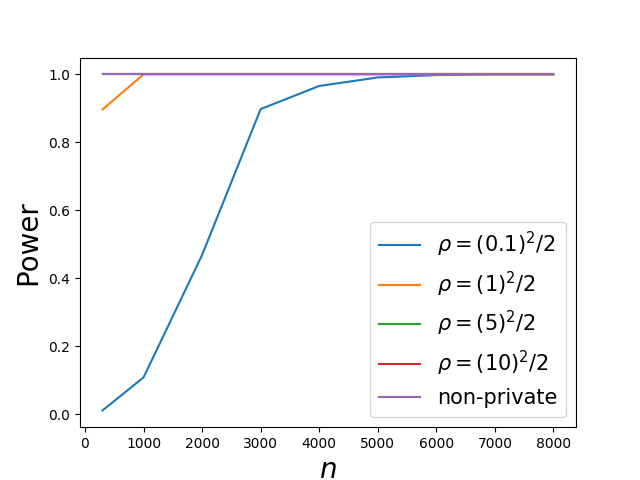}
    \caption{Power for testing mixtures. Equal-sized groups. $x_i\sim\calN(0.5, 1)$, $y_i\sim -1\cdot x_i + \calN(0, 0.35^2)$ for Group 1. $y_i\sim 1\cdot x_i + \calN(0, 0.35^2)$ for Group 2.}
    \label{fig:mix4b}
    \end{subfigure}
    \caption{}
\end{figure}

\textbf{Power while Varying the Noise in the Dependent Variable}: 
We also vary $\sigma_e$. We generally see that the smaller it is,
the smaller the power. We conjecture that this happens because
we err on the side of failing to reject the null if 
the DP estimate of $\sigma_e$ becomes $\leq 0$, which is more
likely to happen if $\sigma_e$ is small.
In Figures~\ref{fig:mix5a},~\ref{fig:mix5b},and~\ref{fig:mix5c} we see this phenomenon.

\begin{figure}
  \begin{subfigure}[b]{0.3\textwidth}
    \includegraphics[width=\textwidth]{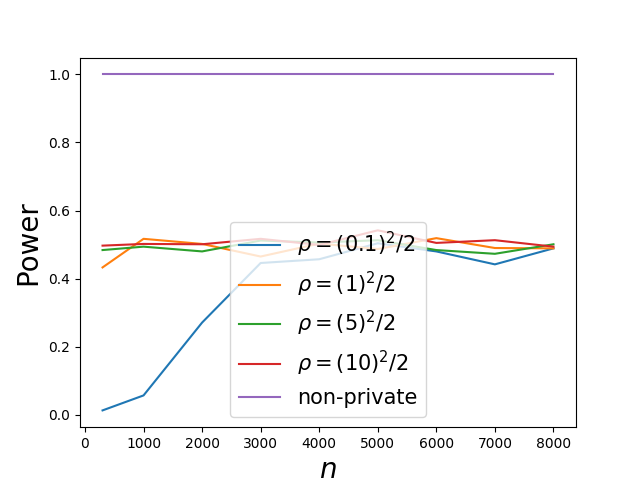}
    \caption{Power for testing mixtures. Equal-sized groups. $x_i\sim\calN(0.5, 1)$, $y_i\sim -1\cdot x_i + \calN(0, 0.01^2)$ for Group 1. $y_i\sim 1\cdot x_i + \calN(0, 0.01^2)$ for Group 2.}
    \label{fig:mix5a}
  \end{subfigure}
  \qquad
   \begin{subfigure}[b]{0.3\textwidth}
    \includegraphics[width=\textwidth]{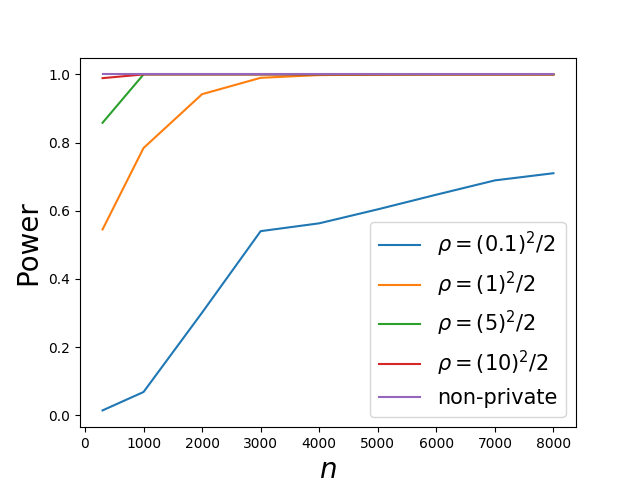}
    \caption{Power for testing mixtures. Equal-sized groups. $x_i\sim\calN(0.5, 1)$, $y_i\sim -1\cdot x_i + \calN(0, 0.35^2)$ for Group 1. $y_i\sim 1\cdot x_i + \calN(0, 0.35^2)$ for Group 2.}
    \label{fig:mix5b}
    \end{subfigure}
    \qquad
    \begin{subfigure}[b]{0.3\textwidth}
    \includegraphics[width=\textwidth]{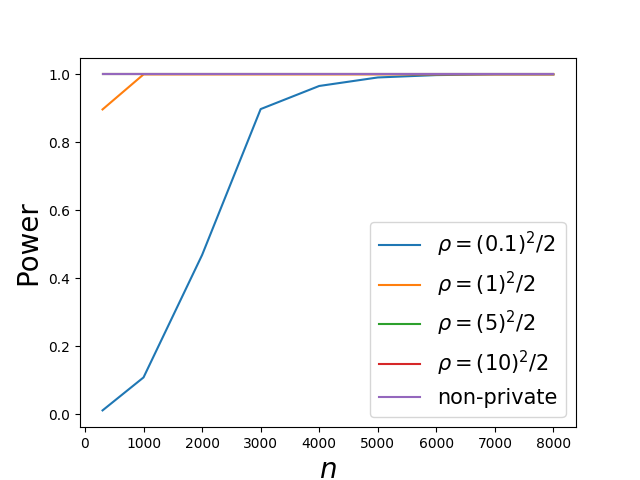}
    \caption{Power for testing mixtures. Equal-sized groups. $x_i\sim\calN(0.5, 1)$, $y_i\sim -1\cdot x_i + \calN(0, 1)$ for Group 1. $y_i\sim 1\cdot x_i + \calN(0, 1)$ for Group 2.}
    \label{fig:mix5c}
    \end{subfigure}
  \caption{}
\end{figure}

\subsubsection{Nonparametric Tests via Kruskal-Wallis}

We now proceed to show results for comparing the mixture models
based on Kruskal-Wallis (KW) to the parametric $F$-statistic method.

\textbf{Evaluating the Significance}:
The KW methods, on average, achieve
the target significance levels for all values of
$\rho$ as illustrated
in Figures~\ref{fig:mixkw1a} and~\ref{fig:mixkw1b},
where we vary the noise in the dependent variable.

\begin{figure}
  \begin{subfigure}[b]{0.5\textwidth}
    \includegraphics[width=\textwidth]{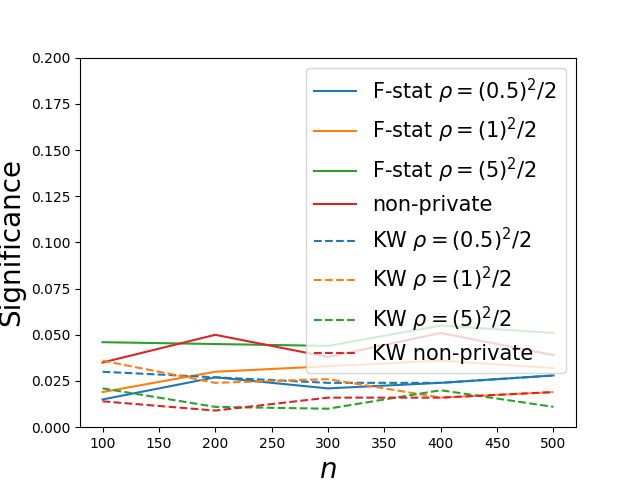}
    \caption{Significance for testing mixtures. Equal-sized groups. $x_i\sim\calN(0.5, 0.1)$, $y_i\sim 1\cdot x_i + \calN(0, 0.35^2)$ for Group 1. $y_i\sim 1\cdot x_i + \calN(0, 0.35^2)$ for Group 2.}
    \label{fig:mixkw1a}
  \end{subfigure}
    \qquad
    \begin{subfigure}[b]{0.5\textwidth}
    \includegraphics[width=\textwidth]{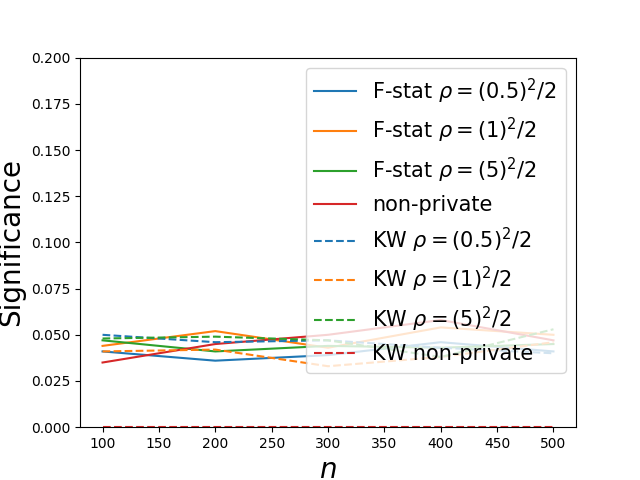}
    \caption{Significance for testing mixtures. Equal-sized groups. $x_i\sim\calN(0.5, 1)$, $y_i\sim 1\cdot x_i + \calN(0, 1)$ for Group 1. $y_i\sim 1\cdot x_i + \calN(0, 1)$ for Group 2.}
    \label{fig:mixkw1b}
    \end{subfigure}
  \caption{}
\end{figure}

\textbf{Evaluating the Power as we Increase the Difference in Slopes}:
We see that the the KW method outperforms the $F$-statistic method on
small datasets. But as the difference in slopes between the two groups
increases, the $F$-statistic method does better and begins to outperform
the KW method. See Figures~\ref{fig:mixkw2a} and
~\ref{fig:mixkw2b}.

\begin{figure}
  \begin{subfigure}[b]{0.5\textwidth}
    \includegraphics[width=\textwidth]{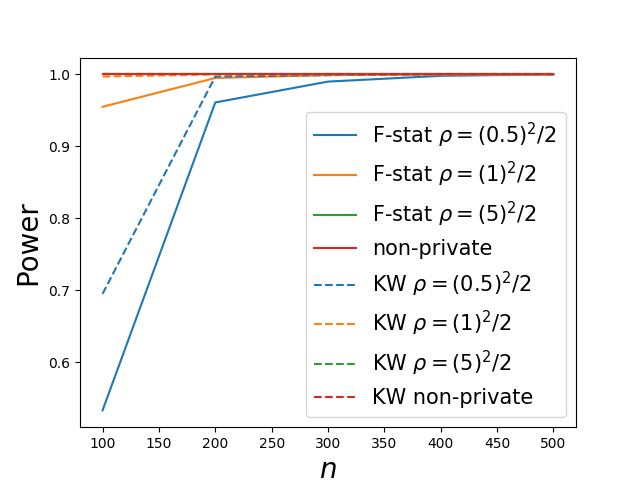}
    \caption{Power for testing mixtures. Equal-sized groups. $x_i\sim\calN(0.5, 1)$, $y_i\sim -1\cdot x_i + \calN(0, 1)$ for Group 1. $y_i\sim 1\cdot x_i + \calN(0, 1)$ for Group 2. }
    \label{fig:mixkw2a}
  \end{subfigure}
  \qquad
  \begin{subfigure}[b]{0.5\textwidth}
    \includegraphics[width=\textwidth]{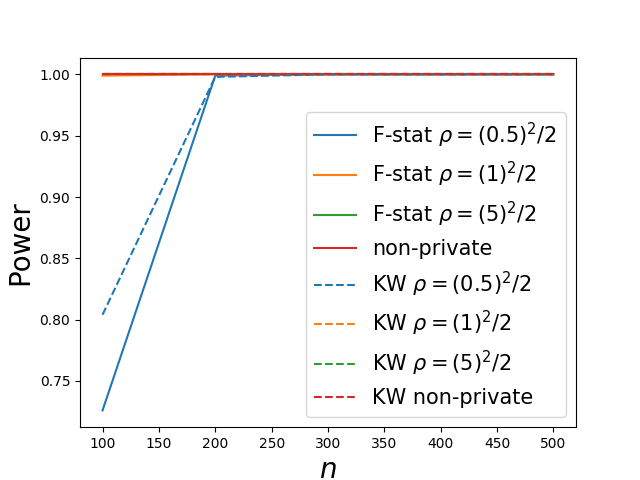}
    \caption{Power for testing mixtures. Equal-sized groups. $x_i\sim\calN(0.5, 1)$, $y_i\sim -1\cdot x_i + \calN(0, 1)$ for Group 1. $y_i\sim 5\cdot x_i + \calN(0, 1)$ for Group 2.}
    \label{fig:mixkw2b}
    \end{subfigure}
  \caption{}
\end{figure}

\textbf{Evaluating the Power as we Increase the Variance of the Independent Variable}:
In Figures~\ref{fig:mixkw3a} and~\ref{fig:mixkw3b},
we see that the $F$-statistic method outperforms the KW method when
the variance of the independent
variable is much larger (10x) than previously.

\begin{figure}
   \begin{subfigure}[b]{0.5\textwidth}
    \includegraphics[width=\textwidth]{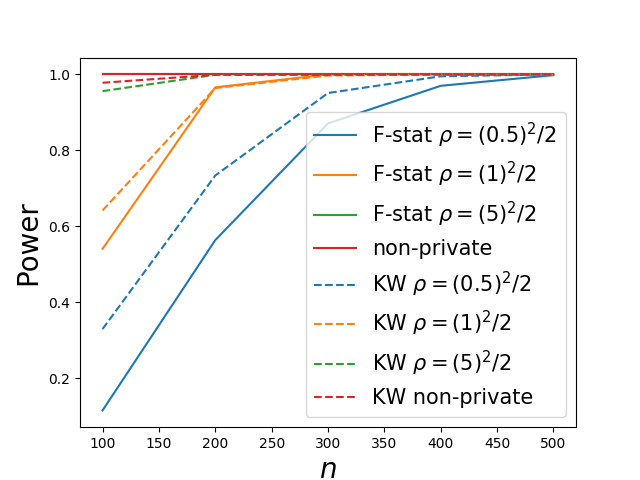}
    \caption{Power for Kruskal-Wallis versus the $F$-statistic. $x_i\sim\calN(0.5, 1)$, $y_i\sim -1\cdot x_i + \calN(0, 1)$ for Group 1. $y_i\sim 1\cdot x_i + \calN(0, 1)$ for Group 2.}
    \label{fig:mixkw3a}
  \end{subfigure}
  \qquad
   \begin{subfigure}[b]{0.5\textwidth}
    \includegraphics[width=\textwidth]{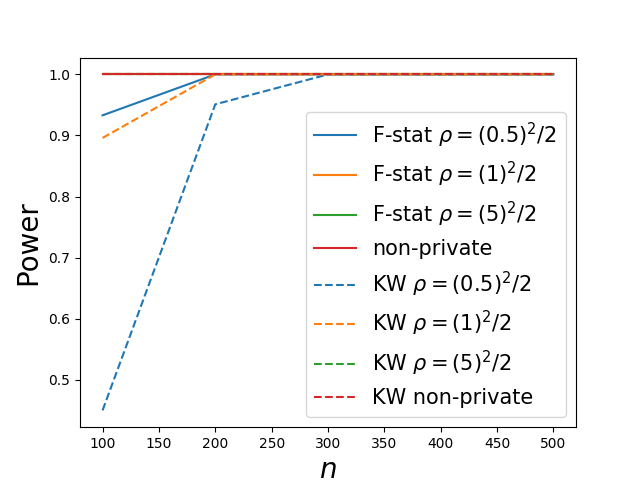}
    \caption{Power for Kruskal-Wallis versus the $F$-statistic. $x_i\sim\calN(0.5, 10)$, $y_i\sim -1\cdot x_i + \calN(0, 1)$ for Group 1. $y_i\sim 1\cdot x_i + \calN(0, 1)$ for Group 2.}
    \label{fig:mixkw3b}
  \end{subfigure}
  \caption{}
\end{figure}

\subsection{Testing on Opportunity Insights Data}

The Opportunity Insights (OI) team gave us simulated data
for census tracts from the following states in the United States:
Idaho, Illinois, New York, North Carolina, Texas, and Tennessee.
The dependent and independent variables are the
child and parent national income percentiles, respectively.
For the linear tester, a rejection of the null hypothesis implies that
there is a relationship between the parent and child income percentiles.
For the mixture model tester, it implies that there is more than one
linear relationship in the data which suggests that more granular data
is needed for analysis on the data. The groups of data
fed to the mixture model tester are conglomeration of one or more tracts.

Some of these states have a small number of datapoints.
For example, within Illinois,
there are tracts with just $n=39$ datapoints. 
For the Illinois dataset,
there are $n = 219, 594$ datapoints that are subdivided into
$3, 108$ census tracts. 
The North Carolina and Texas datasets consists of datapoints subdivided into
$2, 156$ and $5, 187$ census tracts respectively.
We will focus on data from North Carolina (NC), and Texas (TX)
and experimentally evaluate 
$\pr[\text{reject null}]$, the probability of rejecting the
null hypothesis over the randomness of the DP algorithms.
We run our tests
on some census tracts in these states showing how these measures
fair as the privacy parameter is relaxed.
For the experiments below, from each state, we randomly and uniformly
select: 
(i) a single tract;
(ii) 10 randomly selected tracts and concatenate; and
(iii) 50 randomly selected tracts and concatenate.
Then we test for the presence of a
(non-zero) linear relationship.
The concatenation could result in hundreds or thousands of points.

Our tests are evaluated on the OI data.
We have not included the test based on  Kruskal-Wallis as our current implementation
is, at the moment, 
relatively computationally inefficient to evaluate on such large datasets.
See above synthetic data experiments for comparison of Kruskal-Wallis to the 
$F$-statistic method.
Figures~\ref{fig:oilin2a}, and~\ref{fig:oilin3a} show the 
probability of rejecting the null as we increase the parameter $\rho$ when using
the DP linear tester. 
Figures~\ref{fig:oimix2a}, and~\ref{fig:oimix3a} show the
corresponding results for the $F$-stat based DP mixture model tester.
We see that for the small-sized datasets tend to have a small chance of
rejecting the null while larger ones have a higher chance.

\begin{figure}
  \begin{subfigure}[b]{0.5\textwidth}
    \includegraphics[width=\textwidth]{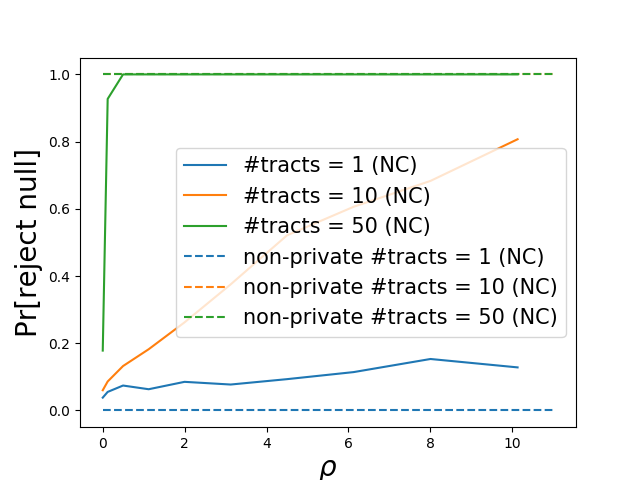}
    \caption{$\pr[\text{reject null}]$ for testing a linear relationship in NC.
    $\Delta = 2$.}
    \label{fig:oilin2a}
  \end{subfigure}
  \qquad
  \begin{subfigure}[b]{0.5\textwidth}
    \includegraphics[width=\textwidth]{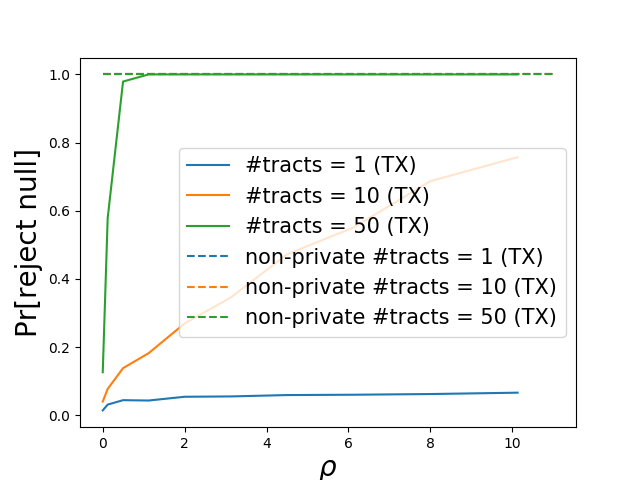}
    \caption{$\pr[\text{reject null}]$ for testing a linear relationship in TX.
    $\Delta = 2$.}
    \label{fig:oilin3a}
  \end{subfigure}
  \caption{}
\end{figure}

\begin{figure}
  \begin{subfigure}[b]{0.5\textwidth}
    \includegraphics[width=\textwidth]{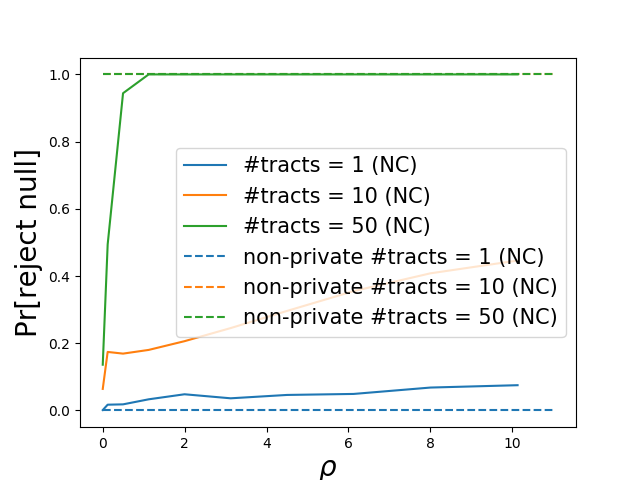}
    \caption{$\pr[\text{reject null}]$ for testing for mixtures in NC.
    $\Delta = 2$.}
    \label{fig:oimix2a}
  \end{subfigure}
  \qquad
  \begin{subfigure}[b]{0.5\textwidth}
    \includegraphics[width=\textwidth]{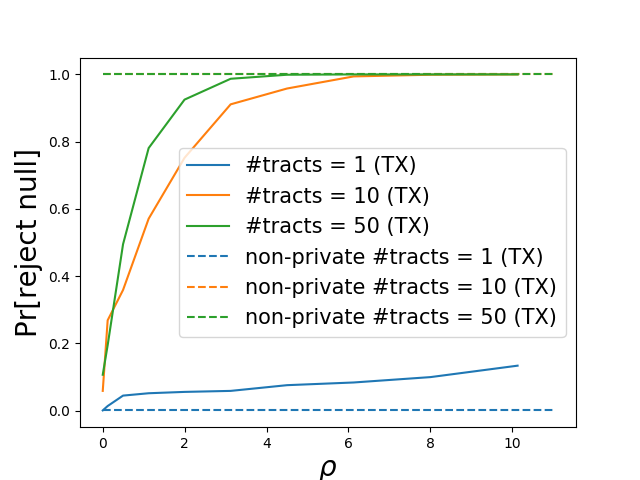}
    \caption{$\pr[\text{reject null}]$ for testing for mixtures in TX.
    $\Delta = 2$.}
    \label{fig:oimix3a}
  \end{subfigure}
  \caption{}
\end{figure}

\subsection{Testing on UCI Bike Dataset}

We use the UCI bike dataset
~\citep{Fanaee-TG14} with 17,389 instances.
For this dataset, we test for a linear relationship
between the ``temp'' (normalized temperature in Celsius)
and ``hr'' (hour between 0 and 23) attributes.
The null hypothesis is that there is no linear
relationship between the ``temp'' and ``hr'' attributes.
Without privacy, the linear relationship tester
based on the $F$-statistic rejects the null.
In Table~\ref{tab:bike}, we show the
probability of Algorithm~\ref{alg:mct}
rejecting the null as we vary the
privacy parameter.
We can observe that for almost all---except for the
smallest setting of $\rho$---privacy parameters,
$\pr[\text{reject null}\mid p\%\text{ data}]$ (probability of
rejecting the null, given $p\%$ of the dataset)
for the private test
matches that of the non-private test.

\begin{table}[]
\begin{tabular}{| l | l | l | l | l | l | l | l | l | l | l | l |}\hline
$\rho$
&  0.005            &  0.125       &  0.5 & 1.125 & 2.0 & 3.125 & 4.5 & 6.125 & 8.0 & 10.125 & non-DP \\\hline
$\pr[\text{reject null}\mid 100\%\text{ data}]$ 
&  1.0          &  1.0       &  1.0 & 1.0 & 1.0 & 1.0 & 1.0 & 1.0 & 1.0 & 1.0 & 1.0  \\\hline
$\pr[\text{reject null}\mid 10\%\text{ data}]$ 
&  0.85          &  1.0       &  1.0 & 1.0 & 1.0 & 1.0 & 1.0 & 1.0 & 1.0 & 1.0 & 1.0  \\\hline
\end{tabular}
\caption{$\pr[\text{reject null}]$ for testing for a
linear relationship between temperature and time
(in hours).}
\label{tab:bike}
\end{table}

While we show that our methods can run on
real-world datasets, the synthetically generated
datasets give a lot more 
information on the behavior of
the tests.

\section{Conclusion}

We have developed differentially private hypothesis tests
for testing a linear relationship in data and for testing for mixtures
in linear regression models.
We also show that the DP $F$-statistic converges to the 
asymptotic distribution of the non-private $F$-statistic.
Through experiments, we show that our Monte Carlo tests
achieve significance that is
less than the target significance level across a wide variety of experiments.
Furthermore, our
tests generally have a high power, getting higher as we increase the
dataset size and/or relax the privacy parameter. Even on small datasets
(in the hundreds) with small slopes, our tests retain the small
significance while having a good power.
We have provided formal statements for the DP $F$-statistic
in the asymptotic regime. We leave to future work the task
of theoretically analyzing the procedures in the
non-asymptotic regime.

Experimental evaluation is done on 
simulated data for the Opportunity Atlas tool, UCI datasets,
and on synthetic datasets of varying distributions on the
independent variable (normal, exponential, and uniform).

\section{Other Acknowledgements}

We are grateful to 
Mark Fleischer (of the U.S. Census Bureau) for many helpful
comments and suggestions that improved the presentation of
this work.
We thank Isaiah Andrews, 
Jordan Awan,
Cynthia Dwork,
Cecilia Ferrando,
Kosuke Imai, Gary King, Po-Ling Loh,
Santiago Olivella, Neil Shephard, 
Adam Smith, Thomas Steinke,
Soichiro Yamauchi, and
seminar participants of the
Harvard econometrics workshop for helpful discussions
related to this work. Finally,
we thank anonymous reviewers for helpful comments.

\clearpage

\bibliographystyle{alpha}
\bibliography{main}

\appendix

\clearpage

\section{$F$-Statistic for the General Linear Model}

The proofs in this section rely on insights from
~\citep{keener2010theoretical}.
In fact,
Theorem~\ref{thm:fstat} can be seen as a special case of Theorem 14.11
in~\citep{keener2010theoretical} where, under the null hypothesis, the projection onto
$\omega_0$ results in $\beta^N$ and, under the alternative hypothesis, the projection
onto $\omega$ results in $\beta$.

We present the main test statistic we will use for hypothesis testing.
This statistic is equivalent to the generalized likelihood ratio test statistic and
can be written as
\begin{equation}
T =  \left(\frac{n-r}{r-q}\right)\frac{\norm{Y - X\hat\beta^N}^2 - \norm{Y - X\hat\beta}^2}{\norm{Y-X\hat\beta}^2} = \left(\frac{n-r}{r-q}\right)\frac{\norm{X\hat\beta - X\hat\beta^N}^2}{\norm{Y-X\hat\beta}^2},
\label{eq:fstat}
\end{equation}
where $\hat\beta^N, \hat\beta$ are the least squares estimates under the null
and alternative hypothesis respectively.

The vectors $Y-X\hat\beta$ and $X\hat\beta - X\hat\beta^N$ can be shown to
be orthogonal, so that $\norm{Y-X\hat\beta^N}^2 = \norm{Y-X\hat\beta}^2 + \norm{X\hat\beta-X\hat\beta^N}^2$ by the Pythagorean theorem~\citep{keener2010theoretical}.

\begin{lemma}[Weak Law of Large Numbers,  see~\citep{keener2010theoretical}]
Let $Y_1, \ldots, Y_n$ be i.i.d. random variables with
mean $\mu$. Then
$$
\frac{1}{n}\sum_{i=1}^nY_i = \bar{Y}_n\xrightarrow{P}\mu,
$$
provided that $\E[|Y_i|] < \infty$.
\label{lem:wlln}
\end{lemma}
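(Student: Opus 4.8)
The plan is to bound the probability $\pr[|\bar{Y}_n - \mu| > \eps]$ directly and show it vanishes as $n \to \infty$, which is exactly Definition~\ref{def:prob}. The cleanest route---and one that suffices for every application in this paper, since all the variables involved are Gaussian and hence have finite variance---first assumes a finite second moment and then removes that assumption by truncation.

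First I would treat the finite-variance case. Suppose $\var[Y_i] = \sigma^2 < \infty$. By independence, $\var[\bar{Y}_n] = \sigma^2/n$, and since $\E[\bar{Y}_n] = \mu$, Chebyshev's inequality gives
$$
\pr[|\bar{Y}_n - \mu| > \eps] \leq \frac{\var[\bar{Y}_n]}{\eps^2} = \frac{\sigma^2}{n\eps^2} \to 0
$$
for every fixed $\eps > 0$, which is precisely convergence in probability.

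To handle the stated hypothesis of only a finite first moment, I would use a truncation argument. For a threshold $c > 0$ set $Y_i' = Y_i\ind\{|Y_i| \leq c\}$ and $Y_i'' = Y_i - Y_i'$, so that $\bar{Y}_n = \frac{1}{n}\sum_{i=1}^n Y_i' + \frac{1}{n}\sum_{i=1}^n Y_i''$. The truncated variables are bounded and so have variance at most $c^2$; the Chebyshev step above then controls $\frac{1}{n}\sum_i Y_i'$ around its common mean $\E[Y_1']$. The remainder is controlled in $L^1$ via $\E[|Y_1''|] = \E[|Y_1|\ind\{|Y_1| > c\}]$, which tends to $0$ as $c \to \infty$ by dominated convergence (using $\E[|Y_1|] < \infty$); the same estimate shows the truncation bias $\E[Y_1'] - \mu$ tends to $0$. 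Assembling these through a triangle-inequality decomposition of $\bar{Y}_n - \mu$ and letting $c = c_n \to \infty$ slowly enough that $c_n^2/n \to 0$ yields $\pr[|\bar{Y}_n - \mu| > \eps] \to 0$.

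The main obstacle is the bookkeeping in this last step: the truncation level $c_n$ must grow fast enough to kill the tail contribution $\E[|Y_1|\ind\{|Y_1| > c_n\}]$ yet slowly enough to keep the truncated variance $c_n^2/n \to 0$, and the bias $\E[Y_1'] - \mu$ introduced by truncation must be tracked throughout. An alternative that avoids truncation entirely is to show the characteristic function of $\bar{Y}_n$ converges pointwise to $e^{it\mu}$ through a first-order Taylor expansion (valid under only a finite first moment), apply L\'evy's continuity theorem to conclude $\bar{Y}_n \xrightarrow{D} \mu$, and then invoke part 3 of Lemma~\ref{lem:helperprob} to upgrade convergence in distribution to a constant into convergence in probability.
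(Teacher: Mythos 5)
Your proposal is correct, but note that the paper does not actually prove this lemma at all: it is imported verbatim as a standard textbook result, with the citation to Keener standing in for a proof. What you have supplied is therefore strictly more than the paper offers. Both of your routes are sound. The Chebyshev step handles the finite-variance case, and --- as you observe --- that case already covers every invocation in the paper (the lemma is used only to get $\chi^2_n/n\xrightarrow{P}1$ and $\chi^2_{n-r}/(n-r)\xrightarrow{P}1$, where the summands are squares of standard normals and have all moments). Your truncation argument for the general $\E[|Y_1|]<\infty$ case is the standard one and the bookkeeping does close: taking, say, $c_n=n^{1/4}$ satisfies both $c_n\to\infty$ (so the tail term $\E[|Y_1|\ind\{|Y_1|>c_n\}]$ and the bias $|\E[Y_1']-\mu|$ vanish by dominated convergence) and $c_n^2/n\to 0$ (so the Chebyshev bound on the truncated average vanishes); a three-way triangle inequality and a union bound then finish it. The characteristic-function alternative is equally valid and meshes nicely with the paper's own toolkit, since part 3 of Lemma~\ref{lem:helperprob} is exactly the device needed to upgrade convergence in distribution to a constant into convergence in probability. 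The only thing either approach ``buys'' over the paper's citation is self-containedness; within the paper's actual use of the lemma, the two-line Chebyshev argument would have sufficed.
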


\begin{theorem}

For every $n\in\naturals$ with $n > r$, let
$X = X_n\in\reals^{n\times p}$ be the design matrix.
Under the general linear model 
$Y = Y_n\sim\calN(X_n\beta, \sigma_e^2I_{n\times n})$,
$$T = T_n\sim F_{r-q, n-r}(\eta_n^2),\quad \eta_n^2 = \frac{\norm{X_n\beta - X_n\beta^N}^2}{\sigma_e^2},
$$
where $F_{n, m}$ is the $F$-distribution with parameters $n$, $m$, 
$\beta^N = \E[\hat\beta^N]$,
$q$ is the dimension of $\omega_0$, and $r$ is the dimension of $\omega$ with $0\leq q < r$.

Furthermore,
\begin{enumerate}
\item
$$\norm{Y_n-X_n\hat\beta}^2\sim\calX^2_{n-r}\sigma_e^2,\quad\norm{X_n\hat\beta-X_n\hat\beta^N}^2\sim\calX^2_{r-q}(\eta_n^2)\sigma_e^2.$$
\item
If there exists $\eta\in\reals$ such that $\frac{\norm{X_n\beta - X_n\beta^N}^2}{\sigma_e^2}\rightarrow\eta^2$, then
$$
T = T_n\sim F_{r-q, n-r}(\eta_n^2)
\xrightarrow{D} \frac{\chi^2_{r-q}(\eta^2)}{r-q}.
$$
\item We have
$$\frac{\norm{Y_n-X_n\hat\beta}^2}{n-r}\xrightarrow{P}\sigma_e^2.$$
\end{enumerate}

The values $\beta = \E[\hat\beta], \beta^N = \E[\hat\beta^N]$ are
the expected values of our parameter estimates under
the alternative and null hypotheses respectively.

\label{thm:fstat}
\end{theorem}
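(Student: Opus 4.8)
The plan is to reduce everything to the geometry of orthogonal projections and the rotation invariance of the isotropic Gaussian. First I would pass from the parameter subspaces $\omega_0\subset\omega\subseteq\reals^p$ to their images $V_0=\{X_nz:z\in\omega_0\}$ and $V=\{X_nz:z\in\omega\}$ in $\reals^n$; since $X_n$ has full column rank these have dimensions $q$ and $r$, and $V_0\subseteq V$. Writing $P$ and $P_0$ for the orthogonal projections of $\reals^n$ onto $V$ and $V_0$, the defining least-squares property gives $X_n\hat\beta=PY_n$ and $X_n\hat\beta^N=P_0Y_n$. This splits $\reals^n$ into three mutually orthogonal pieces: $V_0$ (dimension $q$), $W\defeq V\cap V_0^\perp$ (dimension $r-q$, the range of $P-P_0$), and $V^\perp$ (dimension $n-r$, the range of $I-P$). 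The orthogonality of $Y_n-X_n\hat\beta=(I-P)Y_n$ and $X_n\hat\beta-X_n\hat\beta^N=(P-P_0)Y_n$ is exactly the Pythagorean identity already recorded before the statement.

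Next I would compute the two quadratic forms. Writing $Y_n=X_n\beta+\be$ with $\be\sim\calN(0,\sigma_e^2I_{n\times n})$ and using $X_n\beta\in V$, the residual satisfies $Y_n-X_n\hat\beta=(I-P)\be\sim\calN(0,\sigma_e^2(I-P))$; since $I-P$ is a rank-$(n-r)$ projection, rotation invariance gives $\norm{Y_n-X_n\hat\beta}^2\sim\sigma_e^2\chi^2_{n-r}$. For the between term, $X_n\hat\beta-X_n\hat\beta^N=(P-P_0)Y_n$ is Gaussian with mean $(P-P_0)X_n\beta=X_n\beta-X_n\beta^N$, where I would verify $X_n\beta^N=P_0X_n\beta$ from $\beta^N=\E[\hat\beta^N]$ and $\E[Y_n]=X_n\beta$. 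Because this mean lies in the range $W$ of $P-P_0$, the vector $(X_n\hat\beta-X_n\hat\beta^N)/\sigma_e$ is, in coordinates on $W$, a $\calN(v_n,I_{(r-q)\times(r-q)})$ with $\norm{v_n}^2=\norm{X_n\beta-X_n\beta^N}^2/\sigma_e^2=\eta_n^2$, so its squared norm is $\sigma_e^2\chi^2_{r-q}(\eta_n^2)$; this is statement (1). Independence of the two forms follows because $(I-P)\be$ and $(P-P_0)\be$ are jointly Gaussian with cross-covariance $\sigma_e^2(I-P)(P-P_0)=0$ (using $V_0\subseteq V$). Hence $T_n=\frac{n-r}{r-q}\cdot\frac{\norm{X_n\hat\beta-X_n\hat\beta^N}^2}{\norm{Y_n-X_n\hat\beta}^2}$ is a ratio of independent scaled chi-squares, which by definition is $F_{r-q,n-r}(\eta_n^2)$.

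For the asymptotic claims I would invoke the weak law of large numbers and Slutsky's theorem. Statement (3) is immediate: $\norm{Y_n-X_n\hat\beta}^2/(n-r)=\sigma_e^2\cdot\chi^2_{n-r}/(n-r)$, and $\chi^2_{n-r}/(n-r)\xrightarrow{P}1$ by Lemma~\ref{lem:wlln} applied to the i.i.d.\ unit-mean summands. For statement (2), when $\eta_n^2\rightarrow\eta^2$ I would realize $\chi^2_{r-q}(\eta_n^2)$ as the squared norm of a $\calN(v_n,I_{(r-q)\times(r-q)})$ vector with $\norm{v_n}^2=\eta_n^2$; choosing $v_n\rightarrow v$ with $\norm{v}^2=\eta^2$ and applying the continuous mapping theorem yields $\chi^2_{r-q}(\eta_n^2)\xrightarrow{D}\chi^2_{r-q}(\eta^2)$, while the denominator $\chi^2_{n-r}/(n-r)\xrightarrow{P}1$ as before; Slutsky's theorem then gives $T_n\xrightarrow{D}\chi^2_{r-q}(\eta^2)/(r-q)$. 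The main obstacle is the middle paragraph: pinning down the exact noncentral chi-squared laws and the value of the noncentrality parameter, together with the independence of numerator and denominator, is the Cochran's-theorem content of the proof and is where the projection geometry and the identification $X_n\beta^N=P_0X_n\beta$ must be handled carefully; the convergence steps are then routine.
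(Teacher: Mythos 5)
Your proposal is correct and follows essentially the same route as the paper: the paper coordinatizes the argument by choosing an orthonormal basis adapted to $\Omega_0\subset\Omega\subset\reals^n$ and rotating by the orthogonal matrix $O$ so that the three blocks of $Z$-coordinates play exactly the roles of your $V_0$, $W=V\cap V_0^\perp$, and $V^\perp$, and then reads off the two (non)central chi-squared laws, their independence, and the noncentrality parameter $\eta_n^2=\sum_{i=q+1}^r\tau_i^2/\sigma_e^2$, finishing the asymptotics with the weak law of large numbers and Slutsky exactly as you do. Your projection-operator phrasing and your explicit handling of the varying noncentrality $\eta_n^2\rightarrow\eta^2$ via the continuous mapping theorem are only cosmetic refinements of the same argument.
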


\begin{proof}[Proof of Lemma~\ref{thm:fstat}]

First, define $\Omega_0 = \{X\beta\,:\,\beta\in\reals^p, \beta\in\omega_0\}$ 
(for null hypothesis) and
$\Omega = \text{span}\{c_1, \ldots, c_p\} = \{X\beta\,:\,\beta\in\reals^p, \beta\in\omega\}$ (for alternative) where $c_1, \ldots, c_p$ are the columns of
$X$.
Write $Y = \sum_{i=1}^nZ_iv_i$,
where $v_1, \ldots, v_n$ is an orthonormal basis chosen so that
$v_1, \ldots, v_r$ spans $\Omega$ (so that $v_{r+1}, \ldots, v_n$ lies in $\Omega^\perp$) and $v_1, \ldots, v_q$ spans $\Omega_0$.

For all $i\in[n]$, we can find $Z_i\in\reals^n$ by 
introducing an $n\times n$ matrix $O$ with columns $v_1, \ldots, v_n$.
As a result, $O$ is an orthogonal matrix (i.e., $O^TO = OO^T = I$ since $O$ is
a square matrix) such that $Z = O^TY$ (or $Y = OZ$).

As before $Y = X\beta + e$, where $e\sim\calN(0,\sigma_e^2I_{n\times n})$. As a result,
$Z = O^T(X\beta + e) = O^TX\beta + O^Te$. If we define $\tau = O^TX\beta$ and
$e^* = O^Te$, then $Z = \tau + e^*$.
And because $\E[e^*] = \E[O^Te] = O^T\E[e] = 0$ and
$\cov(e^*) = \cov(O^Te) = O^T\cov(e)O = O^T(\sigma_e^2I)O = \sigma_e^2I$,
$e^* \sim \calN(0, \sigma_e^2I_{n\times n})$. As a result,
$$
Z \sim \calN(\tau, \sigma_e^2I_{n\times n}).
$$

Next, since $c_1, \ldots, c_p$ denotes the columns of the design matrix $X$,
$X\beta = \sum_{i=1}^p\beta_ic_i$ and
$$
\tau = O^TX\beta = \begin{pmatrix}
v_1^T\\
v_2^T\\
\vdots\\
v_n^T\\
\end{pmatrix}
\sum_{i=1}^p\beta_ic_i
= \begin{pmatrix}
\sum_{i=1}^p\beta_iv_1^Tc_i\\
\sum_{i=1}^p\beta_iv_2^Tc_i\\
\vdots\\
\sum_{i=1}^p\beta_iv_n^Tc_i\\
\end{pmatrix}.
$$

And because $c_1, \ldots, c_p$ all lie in $\Omega$ and
$v_{r+1}, \ldots, v_n$ in $\Omega^\perp$, we have
$v^T_kc_i = 0$ for all $k > r$. Then,
$\tau_{r+1} = \cdots = \tau_n = 0.$

Additionally, because $\tau = O^TX\beta$,
$$
X\beta = O\tau = (v_1 \cdots v_n)\begin{pmatrix}
\tau_1\\
\tau_2\\
\vdots\\
\tau_r\\
0\\
\vdots\\
0\end{pmatrix}
 = \sum_{i=1}^r\tau_iv_i.
 $$

\textit{Essentially, we established a one-to-one relation between points
$X\beta\in\Omega$
and $(\tau_1, \ldots, \tau_r)\in\reals^r$.}

Now, since $Z\sim\calN(\tau, \sigma_e^2I_{n\times n})$, 
$Z_1, \ldots, Z_n$ are independent and
$Z_i\sim\calN(\tau_i, \sigma_e^2)$ for all $i\in[n]$. Furthermore,
$\tau_{r+1} = \cdots = \tau_n = 0$. 

Then since
$X\beta = \sum_{i=1}^r\tau_iv_i$,
$X\hat\beta = \sum_{i=1}^rZ_iv_i$ and
$X\hat\beta^N = \sum_{i=1}^qZ_iv_i$.

As a result, we get
$$
\norm{Y - X\hat\beta}^2 = \norm{\sum_{i=r+1}^nZ_iv_i}^2 = 
\sum_{i=r+1}^n\sum_{j=r+1}^nZ_iZ_jv_i^Tv_j = \sum_{i=r+1}^nZ_i^2,
$$
which follows since for all $i\neq j$, $v^T_iv_j = 0$ and for $i=j$, $v^T_iv_j = 1$.
Also, since
$\tau_{r+1} = \cdots = \tau_n = 0$,
$Z_i\sim\sigma_e\calN(\tau_i, 1)$,
$\norm{Y - X\hat\beta}^2\sim\calX^2_{n-r}\sigma_e^2$.

Similarly,
$$
\norm{Y - X\hat\beta^N}^2 = \sum_{i=q+1}^nZ_i^2.
$$
Then by Equation~(\ref{eq:fstat}),
$$
T = \frac{\frac{1}{r-q}\sum_{i=q+1}^rZ_i^2}{\frac{1}{n-r}\sum_{i=r+1}^nZ_i^2} = \frac{\frac{1}{r-q}\sum_{i=q+1}^r(Z_i/\sigma_e)^2}{\frac{1}{n-r}\sum_{i=r+1}^n(Z_i/\sigma_e)^2}.
$$

The variables $Z_i$ are independent and because $Z_i/\sigma_e\sim\calN(\tau_i/\sigma_e, 1)$, using
properties of the (non-central) chi-squared distribution,
$$
\sum_{i=q+1}^r\left(\frac{Z_i}{\sigma_e}\right)^2\sim\chi^2_{r-q}(\eta_n^2),\quad
\eta_n^2 = \sum_{i=q+1}^r\frac{\tau_i^2}{\sigma_e^2}.
$$
As a corollary,
$\norm{X\hat\beta-X\hat\beta^N}^2\sim\calX^2_{r-q}(\eta_n^2)\sigma_e^2$.

Also, since $\tau_i = 0$ for $i=r+1, \ldots, n$, $Z_i/\sigma_e\sim\calN(0, 1)$ for $i=r+1, \ldots, n$.
As a result,
$\sum_{i=r+1}^n(Z_i/\sigma_e)^2\sim\chi^2_{n-r}$.

By definition of the noncentral $F$-distribution, we have
$T\sim F_{r-q, n-r}(\eta_n^2)$ where $\eta_n^2 = \sum_{i=q+1}^r\frac{\tau_i^2}{\sigma_e^2}$.

We know that $X\beta = \E[X\hat\beta] = \sum_{i=1}^r\tau_iv_i$ and
$X\beta^N = \E[X\hat\beta^N] = \sum_{i=1}^q\tau_iv_i$. As a result,
$X\beta - X\beta^N = \sum_{i=q+1}^r\tau_iv_i$ so that
$$
\norm{X\beta - X\beta^N}^2 = \sum_{i=q+1}^r\tau_i^2.
$$

This completes the proof of the distribution
of $T$.

In the limit, by Lemma~\ref{lem:convf}
$$T_n = F_{r-q, n-r}(\eta_n^2) \xrightarrow{D} \frac{\chi^2_{r-q}(\eta^2)}{r-q}.$$

Finally, we have established that
$$\frac{\norm{Y_n-X_n\hat\beta}^2}{n-r}\sim\frac{\calX^2_{n-r}\sigma_e^2}{n-r}.$$
Applying the weak law of large numbers (Lemma~\ref{lem:wlln}),
$\frac{\chi^2_{n-r}}{n-r}\xrightarrow{P} 1$. 
As a result, 
$$
\frac{\norm{Y_n-X_n\hat\beta}^2}{n-r}\xrightarrow{P}\sigma_e^2.
$$

\end{proof}

\begin{lemma}
Let $X\sim F_{n, m}(\lambda)$ and
$Y = \lim_{m\rightarrow\infty} nX$. Then $Y\sim\chi^2_n(\lambda)$.
\label{lem:convf}
\end{lemma}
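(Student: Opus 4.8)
The plan is to unwind the definition of the noncentral $F$-distribution and then reduce the claim to a single application of the weak law of large numbers together with Slutsky's Theorem. Recall from the definition stated just after Theorem~\ref{thm:f} that if $X\sim F_{n,m}(\lambda)$, then $X$ has the same distribution as $\frac{\chi^2_n(\lambda)/n}{\chi^2_m/m}$, where the numerator $\chi^2_n(\lambda)$ and the denominator $\chi^2_m$ are independent. The expression $Y = \lim_{m\to\infty} nX$ should be read as a distributional limit, so the goal is to establish $nX \xrightarrow{D}\chi^2_n(\lambda)$ as $m\to\infty$.

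First I would rewrite $nX$ as $\frac{\chi^2_n(\lambda)}{\chi^2_m/m}$, isolating all dependence on $m$ in the denominator. The numerator is a fixed random variable distributed as $\chi^2_n(\lambda)$ that does not vary with $m$, so in particular it converges trivially in distribution to $\chi^2_n(\lambda)$. This framing is exactly what is needed to feed the ratio into Slutsky's Theorem.

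Next I would control the denominator. Writing $\chi^2_m = \sum_{i=1}^m W_i$ with the $W_i$ i.i.d. squared standard normals, each summand has mean $\E[W_i] = 1 < \infty$, so the weak law of large numbers (Lemma~\ref{lem:wlln}) applies and yields $\chi^2_m/m \xrightarrow{P} 1$ as $m\to\infty$. Finally, I would combine the two observations via Slutsky's Theorem (Theorem~\ref{thm:slutsky}, part 3): taking the numerator as a (constant-in-$m$) sequence converging in distribution to $\chi^2_n(\lambda)$ and the denominator $\chi^2_m/m \xrightarrow{P} 1 \neq 0$, we conclude $nX \xrightarrow{D} \chi^2_n(\lambda)/1 = \chi^2_n(\lambda)$, which is the claim.

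There is no substantive obstacle here; the argument is a direct structural computation. The only two points requiring mild care are (i) interpreting $Y = \lim_{m\to\infty} nX$ as convergence in distribution rather than an almost-sure or pointwise limit, and (ii) verifying the finite-mean hypothesis $\E[W_i] = 1 < \infty$ so that Lemma~\ref{lem:wlln} is genuinely applicable. Once these are noted, Slutsky's Theorem closes the argument immediately.
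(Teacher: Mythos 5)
Your proposal is correct and follows essentially the same route as the paper's proof: both decompose $X$ via the definition of the noncentral $F$-distribution as a ratio with numerator $\chi^2_n(\lambda)$ and denominator $\chi^2_m/m$, apply the weak law of large numbers (Lemma~\ref{lem:wlln}) to get $\chi^2_m/m\xrightarrow{P}1$, and conclude with Slutsky's Theorem. Your added remarks on interpreting the limit distributionally and checking the finite-mean hypothesis are sound but do not change the argument.
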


\begin{proof}
By definition of the $F$-distribution,
$X = \frac{N/n}{M/m}$, where
$N\sim\calX^2_n(\lambda)$ and $M\sim\calX^2_m$ are independent random
variables.

For mutually independent $\chi^2_1$
random variables $Y_1, \ldots, Y_m$, 
$$M = \frac{Y_1 + \cdots + Y_m}{m}.$$

By the weak law of large numbers
(Lemma~\ref{lem:wlln}),
$$
\frac{M}{m}\xrightarrow{P}\E[Y_1] = 1.
$$

As a result, by Slutsky's Theorem (Theorem~\ref{thm:slutsky}),
$$
\lim_{m\rightarrow\infty}nX = \lim_{m\rightarrow\infty}\frac{N}{M/m} = N \sim\calX^2_n(\lambda).
$$
\end{proof}

\section{Duality Between Testing and Interval Estimation}
\label{sec:dual}

In this section, we expand on the relationship between interval or region estimation
(i.e., confidence interval estimation) and hypothesis testing. We present a general
formulation, which can be specialized to linear regression.
Previous work (e.g.,~\citep{FWS20}) examines the generation
of confidence intervals for 
differentially private parametric inference.
Our work focuses on
hypothesis testing.

As before, for some unknown parameter $\theta\in\Omega, Z\sim P_\theta$ is the observed
data. Also let $f:\Omega\rightarrow\reals$ be a function on the parameter space
(i.e., for linear regression, we can compute functions of the slope and/or intercept).

\begin{definition}[Confidence Region]
A (random) set $S(Z)$ is a $1-\alpha$ confidence region for a (function of a) parameter $f(\theta)$ if
$$
\pr_\theta[f(\theta) \in S(Z)]\geq 1 - \alpha, \quad \forall\theta\in\Omega.
$$
\end{definition}

A confidence region is, essentially,
a multi-dimensional generalization of a confidence interval.

\begin{definition}[Acceptance Region]
For every $f_0\in \reals$, $A(f_0)$ is the acceptance region for a nonrandomized level $\alpha$ test of
$$
H_0: f(\theta) = f_0\,\text{ vs. }\, H_1: f(\theta) \neq f_0.
$$

$A(f_0)$ denotes the range of values that would lead to acceptance of the null hypothesis, when the null
is true, where for the
level $\alpha$ test,
$$
\pr_\theta[Z \in A(f(\theta))]\geq 1 - \alpha, \quad \forall\theta\in\Omega.
$$
\end{definition}

Define the following function:
$$
S(z) = \{f_0\,:\,\exists \theta_0\text { s.t. }f_0 = f(\theta_0)\text{ and }z\in A(f_0)\}.
$$

Then $f(\theta)\in S(Z) \iff Z\in A(f(\theta))$ which implies that
$$
\pr_\theta(f(\theta) \in S(Z)) = \pr_\theta(Z\in A(f(\theta))) \geq 1 - \alpha.
$$
We have established that $S(Z)$ is, thus, a $1-\alpha$ confidence region for $f$.
Essentially, we have shown that
\textbf{we can construct confidence regions from a family of nonrandomized tests}.

For any function $f:\Omega\rightarrow\reals$ and
$f_0 = f(\theta_0)$, we could seek to obtain a $1-\alpha$ confidence region $S(Z)$
for the parameter $f_0$ (e.g., the mean or median).

Now, consider a test $\phi$ defined by
$$
\phi(z) = \begin{cases} 1 &\mbox{if } f_0 \notin S(z) \\
0 & \mbox{otherwise}
\end{cases}.
$$

Then if $f(\theta) = f_0$, then
\begin{align}
\E_\theta\phi &= \pr_\theta[f_0 \notin S(Z)] \\
&= \pr_\theta[f(\theta)\notin S(Z)] \leq \alpha.
\end{align}

This test has level at most $\alpha$ for testing
$$H_0: f(\theta) = f_0\,\text{ vs. }\, H_1: f(\theta) \neq f_0.$$

Then if the coverage probability for $S(Z)$ is exactly $1-\alpha$, then
$$
\pr_\theta[f(\theta) \in S(Z)] = 1-\alpha,\quad \forall\theta\in\Omega,
$$
so that $\phi$ will have level of exactly $\alpha$.

To summarize, we have shown that we can: (1) Construct confidence regions from a family of nonrandomized tests. (2) Construct a family of nonrandomized tests from a $1-\alpha$ confidence region.

Since (nontrivial) DP tests are randomized, to
apply the duality framework, we could de-randomize by
giving the DP procedure the random bits to be used for
DP. Or we can also define the DP procedure as a family
of nonrandomized tests.

We can still turn DP procedures for confidence region estimation into
procedures for testing although, as we show below, the power of the
corresponding tests is likely to be very low if the area of the
confidence regions are too large.

\subsection{More Details on Experimental Evaluation of DP Confidence Intervals}

We now proceed to construct a hypothesis test based
on DP parametric bootstrap confidence intervals
(e.g., using the work of~\citep{FWS20}).
Then we will experimentally compare to our linear relationship
tester based on the DP $F$-statistic.

Suppose that $\theta$ is the set of parameters (e.g., standard deviation of
the dependent and independent variables) and
$f = f(\theta)$ is the estimation target (e.g., the slope in the dataset).
The goal is to obtain a
$1-\alpha$ confidence interval $[\hat{a}_n, \hat{b}_n]$ for $f(\theta)$ via
an end-to-end differentially private procedure. In other words, we want
$$
\pr\left[\hat{a}_n \leq f(\theta) \leq \hat{b}_n\right] = 1-\alpha,
$$
where the probability is taken over both $\theta$ and
$f$.

Because of the randomized nature of (non-trivial) DP procedures, the 
finite-sample coverage
of the interval might not exactly be close to $1-\alpha$.
Ferrando, Wang, and Sheldon~\citep{FWS20} show the consistency of these
intervals (in the large-sample, asymptotic regime).

Algorithm~\ref{alg:mctci} follows the same framework as
Algorithm~\ref{alg:mct}, except that instead of simulating test statistics
under the null hypothesis, the
goal is to calculate a confidence interval for the slope.
$P_{(\tilde\theta_0, \tilde\theta_1)}$ 
denotes the distribution from which
we shall generate our bootstrap samples and from which a confidence interval
can be estimated.
For example, for taking bootstrap samples for the slope,
$P_{(\tilde\theta_0, \tilde\theta_1)}$ would
approximately be distributed as
$\calN(\tilde\beta_1, \frac{\widetilde{S^2}}{\widetilde{\textrm{nvar}}})$
where $\widetilde{\textrm{nvar}} = n\cdot\widetilde{\XXm} - n\cdot\tilde{x}^2$ and
$\widetilde{S^2}$ is as defined in Algorithm~\ref{alg:t1}.
Note that a crucial difference between tests
based on the parametric bootstrap confidence intervals
and our tests is the following:
our tests
only use $\tilde\theta_0$, a subset of the estimated DP
statistics, to simulate the null distribution and decide to
reject the null while the
other approach uses 
$(\tilde\theta_0, \tilde\theta_1)$ to decide to reject the null.

The target slope is $b$. For example, if we seek to test for a linear
relationship, we set $b = 0$ since under the null hypothesis, the slope will be 0.
$\dpstats$ is a $\rho$-zCDP procedure for estimating
DP sufficient statistics for a parametric model.
In Algorithm~\ref{alg:mctci}, $(s_{(l)}, s_{(r)})$ is the parametric
bootstrap confidence interval for the slopes under the null hypothesis.

\begin{algorithm}
\KwData{$X\in\reals^{n\times p}; Y\in\reals^n$}
\KwIn{$n\text{ (dataset size)}; \rho\text{ (privacy-loss parameter)}; \alpha\text{ (target significance)}; b\text{ (target slope)}$}

$(\tilde\theta_0, \tilde\theta_1) = \dpstats(X, Y, n, \rho)$

\If {$\tilde\theta_0 =  \tilde\theta_1 = \perp$} {
 \Return Fail to Reject the null
}

\

Select $K > 1/\alpha$

\For {$k=1\ldots K$} {

   Sample slope $s_k \sim P_{(\tilde\theta_0, \tilde\theta_1)}$

}

\

Sort $s_{(1)} \leq\cdots\leq s_{(K)}$

\

Set $l = \lceil (K+1)(\alpha/2)\rceil$

Set $r = \lceil (K+1)(1-\alpha/2)\rceil$

\If { $b\notin (s_{(l)}, s_{(r)})$ } {
 \Return Reject the null
} \Else {
 \Return Fail to Reject the null
}

\caption{DP Test Framework via Parametric Bootstrap Confidence Intervals.}
\label{alg:mctci}
\end{algorithm}

Without privacy, by Lemma~\ref{lem:nonprivateconv}, 
under the null hypothesis,
we know that the slopes will be distributed
as the following distribution:
$\hat\beta_1\sim\calN\left(\beta, \frac{\sigma_e^2}{n\cdot\widehat{\sigma^2_x}}\right)\sim\calN\left(0, \frac{\sigma_e^2}{n\cdot\widehat{\sigma^2_x}}\right)$.
Even
as $n$ increases and as we take fresh samples of
$\hat\beta_1$, the parametric bootstrap confidence interval
around $\hat\beta_1$ gets smaller and more concentrated
around the true value 0. We expect to observe similar behavior
when applying DP.

\section{Experimental Framework for Monte Carlo Evaluation}
\label{sec:expframework}

In Algorithm~\ref{alg:estrej}, we present a generic
procedure showing how we obtain significance
and power on our experimental evaluation of our
DP tests. 
$\EstRejection$ is a meta-procedure that uses $\DataSampler$ to sample a dataset
$D$ either from the null or alternative distribution we are testing. Then it runs
$\MCTester$ to decide whether to reject or fail to reject the null.
$\MCTester$ can be any of the private Monte Carlo tests defined above or their non-private
versions.
The fraction of times (amongst $M$ trials)
a reject decision is returned is estimated and can be used to calculate the
significance or the power of the test.

\paragraph{$\DataSampler$:}
This procedure is used to sample from a user-specified distribution for testing the null
or the alternative hypothesis. For example, for two groups $(X_1, Y_1)$ and
$(X_2, Y_2)$ with slopes $\beta_1$ and $\beta_2$ respectively,
the user could specify
$\beta_1 = \beta_2$ as parameters to the data sampler. 
Other parameters include the size
of the groups, the noise distribution in the dependent or
independent variable, and so on.

\paragraph{$\MCTester$:}
Examples of this procedure are instantiations of
Algorithm~\ref{alg:mct}.

\paragraph{$\CompareAlgs$:}
For user-specified  data samplers and Monte Carlo test procedures, this procedure
(Algorithm~\ref{alg:comparealgs}) collates significance and power (to be plotted, for example).

\begin{algorithm}
\KwIn{$\DataSampler; \MCTester; M (\# trials)$}

$r = 0$

\For {$m = 1, \ldots, M$} {
    $D \leftarrow \DataSampler()$
    
    \If {$\MCTester(D)$ = Reject the null} {
        $r = r + 1$
    }
}

// Compute empirical probability that the test rejects

\Return $r/M$

\caption{$\EstRejection$: Meta-Procedure for Estimating significance and power.}
\label{alg:estrej}
\end{algorithm}

\begin{algorithm}
\KwIn{$\DataSamplerList; \MCTesterList; M (\# trials)$}

$R = []$

\For {$\DataSampler\in\DataSamplerList$} {

    \For {$\MCTester\in\MCTesterList$} {
        $e = \EstRejection(\DataSampler, \MCTester, M)$

        Append $(\DataSampler, \MCTester, M, e)$ to $R$
    }
}

\Return $R$

\caption{$\CompareAlgs$: Compares statistical performance of DP tests.}
\label{alg:comparealgs}
\end{algorithm}

\end{document}